\newtheorem{Proposition}{Proposition}[section]
\newtheorem{Theorem}[Proposition]{Theorem}
\newtheorem{Lemma}[Proposition]{Lemma}
\newtheorem{Corollary}[Proposition]{Corollary}
\theoremstyle{definition}
\newtheorem{Definition}[Proposition]{Definition}
\newtheorem{Remark}[Proposition]{Remark}
\crefname{lstlisting}{listing}{listings}
\Crefname{lstlisting}{Listing}{Listings}
\crefname{equation}{equation}{equations}
\crefname{figure}{figure}{figures}
\crefname{Definition}{definition}{definitions}
\crefname{Proposition}{proposition}{propositions}
  \DeclareMathOperator*{\divergence}{div}
  \DeclareMathOperator*{\domain}{dom}
  \DeclareMathOperator*{\interior}{int}
  \newcommand{\constant}{\ensuremath{c}}
  \newcommand{\embedding}{\ensuremath{\mathcal{E}}}
  \newcommand{\elasticityoperator}{\ensuremath{A}}
  \newcommand{\timedependentelasticityoperator}{\ensuremath{\mathcal{A}}}
  \newcommand{\bilieckiconstant}{\ensuremath{\lambda}}
  \newcommand{\indicatorfunction}{\ensuremath{\chi}}
  \newcommand{\ball}{\ensuremath{\mathbb{B}}}
  \newcommand{\boundary}{\ensuremath{\Gamma}}
  \newcommand{\boundarydisplacements}{\ensuremath{\mbf{{\bar u}}}}
  \newcommand{\boundaryfunctions}{\ensuremath{W}}
  \newcommand{\continuousfunctions}{\ensuremath{C}}
  \newcommand{\convolution}{\ensuremath{\ast}}
  \newcommand{\damage}{\ensuremath{d}}
  \newcommand{\damageconstant}{\ensuremath{\omega}}
  \newcommand{\damagefunctions}{\ensuremath{\mathcal{D}}}
  \newcommand{\damageprocess}{\ensuremath{g}}
  \newcommand{\damageprocesses}{\ensuremath{\mathcal{G}}}
  \newcommand{\dimension}{\ensuremath{N}}
  \newcommand{\displacements}{\ensuremath{\mbf{u}}}
  \newcommand{\displacementdomain}[1]{%
  \ifnum\pdfstrcmp{#1}{}=0 %
    \ensuremath{\mathcal{V}} %
  \else %
    \ensuremath{\mathcal{V}_{#1}} %
  \fi %
  }
  \DeclareMathOperator*{\esssup}{ess\,sup}
  \newcommand{\integrablefunctions}{\ensuremath{L}}
  \newcommand{\forces}{\ensuremath{\mbf{f}}}
  \newcommand{\identity}{\ensuremath{\mathcal{I}}}
  \newcommand{\linearoperators}{\ensuremath{\mathcal{L}}}
  \newcommand{\measurablefunctions}{\ensuremath{\mathcal{M}}}
    \newcommand{\measure}[1]{%
    \ifnum\pdfstrcmp{#1}{H}=0 %
      \ensuremath{\mu}%
    \else %
      \ensuremath{\lambda} %
    \fi %
  }
  \newcommand{\mollifiedgradient}{\ensuremath{\nabla^\mu}}
  \newcommand{\mollifier}{\ensuremath{\psi_{\mu}}}
  \newcommand{\noiselevel}{\ensuremath{\delta}}
  \newcommand{\normals}{\ensuremath{\mbf{\nu}}}
  \newcommand{\perturbation}{\ensuremath{h}}
  \newcommand{\projection}{\ensuremath{\pi}}
  \newcommand{\ptsmap}{\ensuremath{F}}
  \newcommand{\qisobolevfunctions}{\ensuremath{H}}
  \newcommand{\forwardoperator}{\ensuremath{\Phi}}
  \newcommand{\radius}{\ensuremath{r}}
  \newcommand{\sobolevfunctions}{%
  \ensuremath{W}}
  \newcommand{\nemytskiioperator}{\ensuremath{G}}
  \newcommand{\spatialhilbertfunctions}{\ensuremath{H}}
  \newcommand{\spatialfunctions}{\ensuremath{V}}
  \newcommand{\spatialdomain}{\ensuremath{\Omega}}
  \newcommand{\spatialtestfunction}{\ensuremath{v}}
  \newcommand{\spatialtestfunctions}{\ensuremath{\mbf{v}}}
  \newcommand{\spatialvariable}{\ensuremath{x}}
  \newcommand{\spatialvariables}{\ensuremath{\mbf{x}}}
  \newcommand{\strains}{\ensuremath{\mbf{\varepsilon}}}
  \newcommand{\stresses}{\ensuremath{\mbf{\sigma}}}
  \newcommand{\stressforces}{\ensuremath{\mbf{\tau}}}
  \newcommand{\sym}{\ensuremath{\textnormal{Sym}}}
  \newcommand{\timevariable}{\ensuremath{t}}
  \newcommand{\timedomain}{\ensuremath{S}}
  \newcommand{\timedomainmax}{\ensuremath{T}}
  \newcommand{\traceoperator}{\ensuremath{\gamma}}
  \newcommand{\elasticitytensor}{\ensuremath{\mbf{\mathbb{E}}\ \!}}
  \newcommand{\simplefunctions}{\ensuremath{\mathcal{S}}}
  \newcommand{\fixedpointoperator}{\ensuremath{\Psi}}
  \newcommand{\transpose}{\ensuremath{\intercal}}
  \newcommand{\pd}[3]{%
    \ifnum\pdfstrcmp{#3}{1}=0 %
      \ensuremath{\frac{\partial #1}{\partial #2}}%
    \else %
      \ensuremath{\frac{\partial^#3 #1}{\partial #2^#3}} %
    \fi %
  }
  \newcommand{\mbb}[1]{\ensuremath{\mathbb{#1}}}
  \newcommand{\td}[3]{%
    \ifnum\pdfstrcmp{#3}{1}=0 %
      \ensuremath{\frac{d #1}{d #2}}%
    \else %
      \ensuremath{\frac{d^#3 #1}{d #2^#3}} %
    \fi %
  }
  \newcommand{\tn}[1]{\textnormal{#1}}
  \newcommand{\mbf}[1]{\ensuremath{\boldsymbol{#1}}}
  \newcommand{\fa}{\forall\ \!}
\renewcommand{\phi}{\varphi}
\renewcommand{\epsilon}{\varepsilon}
\begin{document}
\title{{I}DENTIFYING {P}ROCESSES {G}OVERNING {D}AMAGE {E}VOLUTION IN {Q}UASI-{S}TATIC {E}LASTICITY\\{P}ART 1 - {A}NALYSIS}



\renewcommand{\shorttitle}{Damage Process Identification in Quasi-Static Elasticity}
\renewcommand{\headeright}{}
\author{ Simon Gr\"utzner \\
	Center for Industrial Mathematics\\
	University of Bremen\\
	Germany \\
	\texttt{simon.gruetzner@uni-bremen.de} \\
	\And
	Adrian Muntean\\
	Department of Mathematics and Computer Science\\
	University of Karlstad\\
	Sweden \\
	\texttt{adrian.muntean@kau.se}\\
}
\maketitle
\noindent
{\bf Abstract.}
We present a quasi-static elasticity model that accounts for damage evolution based on the ideas of \cite{Ka58} and \cite{YuRa68}. 
We show well-posedness of the resulting strongly nonlinear system of differential equations. 
The specific feature is the connection of displacements to damage evolution via a Nemytskii- or superposition- operator.
From a material modelling perspective, the shape of this operator defines the afforementioned connection.
The novelty in this work is the presentation of an inverse problem to identify the shape of this Nemytskii-operator. 
We establish the Fréchet-derivative of the forward operator as well as the adjoint of the derivative and characterize both via systems of linear differential equations. 
We prove ill-posedness of the inverse problem and provide a sufficient condition for the classical nonlinear Landweber method to converge.
\section{Introduction}\label{sec:Introduction}
\noindent The demand for nonlinear material models is vastly growing.
One example being the modeling of damage processes due to different causes.
In general, any real material is subjected to damage evolution.
This causes crucial mechanical properties such as stiffness, load-carrying capacity or creep rupture time to change over time.
With a growing demand for materials specifically tailored to certain settings understanding damage processes is vital to increase predictability of mechanical models and their lifecycles.\\[2ex]
An important factor to keep in mind is that damage itself cannot be observed directly.
It must be measured indirectly by the effect it has on the material properties (cf.~\cite[subsection 1.4.5]{DuKr96}).
This constitutes an inverse problem (cf. \cite{EnHaNeu96,AnKi21,AnRi03}) as we have to estimate the \emph{cause} of an \emph{observed effect}.
So one way to address this would be in the shape of an parameter identification setting in which one would measure the displacements and stresses in a standard tensile test to infer the change of Lamé parameters.
Identifiability of Lamé parameters, stability, and different reconstruction approaches have been studied quite a lot in the Literature, see e.g. \cite{AlBaObHa14, BaBeImMo14, BaUhl12, ThGe20, LeSchl17} and many many more.
In contrast to these approaches, we do want to incorporate the effect damage has on the material response and identify the process that links damage to displacements itself (see \cite{GrMu17}).
This is the true motivation for our project.
We want to present a setting that allows to identify the process that connects displacements to damage.
Incorporating damage evolution already results in a strongly nonlinear system of differential equations, which makes establishing the ingredients that are needed for a successful investigation of the inverse problem much harder to come by.
Since we are looking to identify a process in the shape of a Nemytskii operator here, which to the best of the author's knowledge is a novelty in itself, additional difficulties are added to the task. \\[2ex]
We briefly want to give some references for introductory texts.
An introduction to general elasticity theory can be found in \cite{PhCi88,DuLi76,EbZe90}.
For the modelling of damage already exists a vast collection of research articles (see \cite{GrMu17} for an extensive review).
An introduction to damage modeling in a general continuum mechanical setting can be found in \cite{LeCh90, SuMu12}, for example.
As this article aims to lay the foundation for future research, we based the damage model on the works of \cite{Ka58} and \cite{YuRa68}, which still are applied today, see e.g. \cite{BoCaFreGraNa20, NaNiPoSba03}.\\[2ex]
To motivate our damage model and general setting, let us think of a damaged cylinder fixed at its bottom surface.
We apply a time dependent force $\stressforces$ on its top surface and measure the displacements, as it is common in tensile or creep rupture tests.
This kind of experiment represents the concept according to which we define our mathematical model (see \cite{GrMu17}).
\begin{Definition}[Quasi-static problem of linear elasticity in damaged continua]\label{def:problem_of_linear_elasticity}
    In the following, we refer to
    \begin{subequations}
      \begin{alignat}{2}\label{momentum_equation}
        \stresses %
        &= (1-\damage)\elasticitytensor\strains(\displacements), &\quad \textnormal{in }&\timedomain\times\spatialdomain,\\
        -\divergence{(\stresses)} %
	&= \forces,  &\quad  \tn{in }&\timedomain\times\spatialdomain, \\
        \damage' &= (1-\damage)^{-\alpha}\damageprocess(\nabla\displacements), &\qquad \textnormal{in }&\timedomain\times\spatialdomain \\
        \displacements 	&= \mbf{0},  &\qquad \tn{on }&\timedomain\times\boundary_0,	 \\
        \stresses\normals	&= \stressforces,  &\qquad\tn{on }&\timedomain\times\boundary_1,
      \end{alignat}
      with initial damage
      \begin{equation}
        \damage(0)=\damage_0\quad\tn{in }\spatialdomain
      \end{equation}
    \end{subequations}
    as the \emph{quasi-static problem of linear elasticity in damaged continua}.
\end{Definition}
We state the general outline for this work.
\Cref{sec:Setting} introduces the general notations, preliminary results and general setting.
The analysis of the forward operator (cf.~\Cref{def:forward_operator}) is treated in Section \ref{sec:Forward_Problem}.
We show that it is actually well-defined and provide conditions to ensure its differentiability.
We close this section by presenting a characterisation of the Hilbert adjoint of the Fr\'echet-derivative.
In \Cref{sec:Inverse_Problem}, we state the resulting inverse problem and analyze ill-posedness in the linearized and fully nonlinear setting.
We then look at a strong nonlinearity condition that is a vital property to ensure convergence of iterative Hilbert space methods like nonlinear \emph{Landweber} or \emph{REGINN} (see \cite{HaNeuSche95, AnRi99, AnRi01, }).
We close with an outlook to outline possible future research.

\section{Setting and Weak Formulation}\label{sec:Setting}
  We denote by $\timedomain:=(0,\timedomainmax)$ the time interval of interest, with $0<\timedomainmax<\infty$.
  For a fixed $\dimension\in\{1,2,3\}$, let $\spatialdomain\subset\mathbb{R}^{\dimension}$ be a bounded Lipschitz domain, whose boundary $\partial\spatialdomain$ decomposes into mutually disjoint sets $\boundary_0$ and $\boundary_1$ where $\boundary_0, \boundary_1$  are closed with positive surface measures.
  Both are assumed to be unions of \emph{connected components} of $\partial\spatialdomain$ to avoid issues that arise from singularities where the type of boundary condition change (see also Remark \ref{rem:comp_cond} item \eqref{rem:comp_cond:add_options}).%
  We introduce basic function spaces for damage evolution and start with
  \begin{equation}\label{eq:damage_functions}
      \damagefunctions:=\left\{%
     \damage\in\sobolevfunctions^{1,\infty}\!\left(\timedomain;\, \integrablefunctions^\infty(\spatialdomain)\right);\ %
       0\le\damage(\timevariable,\spatialvariables)\le\damageconstant_1 %
      \text{ a.e. in } \timedomain\times\spatialdomain \right\}
  \end{equation}
  where $\damageconstant_1\in\mbb{R}$ denotes a fixed non-negative constant such that $0\le\damageconstant_1 <1$ holds and name its elements \emph{damage functions}.
  Note that this is a closed set in $\sobolevfunctions^{1,\infty}(\timedomain;\integrablefunctions^{\infty}(\spatialdomain))$ with respect to its usual norm.
  $\sobolevfunctions^{k,p}(\spatialdomain)$ denotes the specified Sobolev space of $k$-times weakly differentiable p-integrable functions.
  We denote by $\spatialhilbertfunctions^k(\spatialdomain):=\sobolevfunctions^{k,2}(\spatialdomain)$ and $\integrablefunctions^p(\spatialdomain):=\sobolevfunctions^{0,p}(\spatialdomain)$ square integrable Sobolev and Lebesgue spaces, respectively.
  The restrictions on the constant $\damageconstant_1$ model the fact that we only consider partial damage.
  We speak of partial damage opposed to substantial damage if the material bonds of a material do not fully disintegrate.
  From a technical point of view this is crucial to preserve the strong ellipticity of the elasticitytensor.
  We will refer to elements of %
  \begin{equation}\label{eq:initial_damage}
    \damagefunctions_0:= %
    \{\damage_0\in L^\infty(\spatialdomain);\, %
    0\le\damage_0(\spatialvariables)\le\damageconstant_0 %
    \text{ a.e. in }\spatialdomain\}
  \end{equation}
  as \emph{initial damage} and expect the constant $\damageconstant_0$ to suffice $0\le\damageconstant_0\le\damageconstant_1$.
  This is to allow for non-zero initial damage as well.
  Let $\overline{Y}:=\overline{\mathbb{B}}(0;\overline{y})\subset\mbb{R}^{\dimension^2}$ be the closed ball of radius $\bar y>0$ and center in $0$ (see also Remark \ref{rem:comp_cond} item \eqref{rem:comp_cond:bound_on_y}).
  We introduce the set of admissible \emph{damage processes}
  \begin{equation}\label{eq:damage_sources}
    \begin{aligned}
      \damageprocesses:=\Big\{\damageprocess %
      &\in  \integrablefunctions^\infty\Big(\timedomain; \integrablefunctions^\infty\big(\spatialdomain;\continuousfunctions^{1,1}(\overline{Y})\big)\Big); \\ %
      &\fa\mbf{y}\in\overline{Y}:\ %
      0\le\damageprocess(\cdot,\cdot,\mbf{y})%
      \le T^{-1}(\damageconstant_1 %
      - \damageconstant_0)(1-\damageconstant_1)^\alpha %
      \text{ a.e. in }\timedomain\times\spatialdomain\
      \Big\}
    \end{aligned}
  \end{equation}
  where $\alpha\ge 1$ is some fixed constant and $\continuousfunctions^{m,\lambda}(\overline{Y})$ names the specified H\"older space of $m$-times continuously differentiable functions $\overline{Y}\subset\mathbb{R}^{\dimension^2}\to\mathbb{R}$ satisfying the H\"older condition of exponent $\lambda$.
  Note that $\continuousfunctions^{0,1}(\overline{Y})$ instead of $\continuousfunctions^{1,1}(\overline{Y})$ in \eqref{eq:damage_sources} is sufficient to show well-posedness of the forward problem.
  But more regularity is needed in order to prove differentiability, see Lemma \ref{lem:nem-op_well-posed}.
  We point out that the mapping $(\timevariable,\spatialvariables)\mapsto\damageprocess(\timevariable,\spatialvariables,\mbf{y})$ is an element of $\integrablefunctions^\infty(\timedomain;\,\integrablefunctions^\infty(\spatialdomain))$ for all $\mbf{y}\in\overline{Y}$, that $\damageprocesses$ is closed in $\integrablefunctions^\infty(\timedomain;\,\integrablefunctions^\infty(\spatialdomain;\,\continuousfunctions^{1,1}(\overline{Y})))$, and that $\mbf{y}\mapsto\damageprocess(\cdot, \cdot,\mbf{y})$ is Lipschitz continuous a.e.~in $\timedomain$ and $\spatialdomain$.
  \begin{Remark}\label{rem:comp_cond}
    \begin{inparaenum}[(a)]
      \item \label{rem:comp_cond:add_options}%
      In a more general setting additional compatibility conditions are needed to ensure higher regularity of the solution (see \cite{MiMi07, GiSa97}, and the references therein).
      Also compare, e.g., \cite{Sha68} where the author shows that for an elliptic problem with homogeneous boundary conditions and right-hand side $f\in\integrablefunctions^p(\spatialdomain)$ guarantees $u\in\sobolevfunctions^{s,p}(\spatialdomain)$ for all $s<\frac{1}{2}+\frac{2}{p}$.
      Particularly, this implies $u\in\continuousfunctions^{\frac{1}{2}-\varepsilon}$ for $\varepsilon>0$.
      An approximation via suitable Robin type boundary conditions is a viable approach to overcome the drawback of mixed boundary conditions and the singularities that come with it (see \cite{LiMa72} for regularity and \cite{GiAu18} for approximation properties).
      Unfortunately, we will see in Section \ref{sec:strong_tangential_cone_condition} that our chosen approach to prove the nonlinear tangential cone condition excludes Robin type boundary conditions. \\[2ex]
      \item \label{rem:comp_cond:bound_on_y}%
      Note that boundedness and closedness for $\overline{Y}\subset\mathbb{R}^{\dimension^2}$ is needed so $\continuousfunctions^{m,\lambda}(\overline{Y})$ becomes a Banach space, which is a necessity to investigate inverse problems with iterative Hilbert or Banach space methods.
      To introduce well-defined and differentiable Nemytskii operators $\continuousfunctions^{m,\lambda}_{\text{loc}}(\mathbb{R}^{\dimension^2})$ would suffice.
    \end{inparaenum}
  \end{Remark}
  By taking the principle ideas from \cite{ApZa90, FrTr10} and extending those by necessary adjustments to fit to our specific setting, these properties ensure a well-defined superposition or Nemytskii operator. %
\begin{Lemma}\label{lem:nem-op_well-posed}
  For every $\boldsymbol{f}\in\integrablefunctions^\infty(\timedomain;\integrablefunctions^\infty(\spatialdomain))^{\dimension^2}$ an admissible damage process $\damageprocess\in\damageprocesses$ generates a Lipschitz-continuous Nemytskii operator
  \begin{subequations}\label{eq:def_nem-op}
  \begin{equation}\label{eq:def_nem-op_a}
    \nemytskiioperator\colon\integrablefunctions^\infty(\timedomain;\integrablefunctions^\infty(\spatialdomain))^{\dimension^2}\to\integrablefunctions^\infty(\timedomain;\integrablefunctions^\infty(\spatialdomain))
  \end{equation}
  via
  \begin{equation}\label{eq:def_nem-op_b}
    \nemytskiioperator(\boldsymbol{f})(\timevariable,\spatialvariables):=\damageprocess(\timevariable,\spatialvariables,\boldsymbol{f}(\timevariable,\spatialvariables)).
  \end{equation}
  If also $\|\boldsymbol{f}\|\le \bar y$ holds
  \begin{equation}\label{eq:nem-op_bounds}
    0\le\nemytskiioperator(\boldsymbol{f})\le\timedomainmax^{-1}(\damageconstant_1-\damageconstant_0)(1-\damageconstant_1)^\alpha
  \end{equation}
  \end{subequations}
  is satisfied almost everywhere in $\timedomain$ and $\spatialdomain$.
\end{Lemma}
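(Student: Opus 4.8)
The plan is to verify, in turn, the three assertions of the lemma: that $\nemytskiioperator$ is well defined as a map into $\integrablefunctions^\infty(\timedomain;\integrablefunctions^\infty(\spatialdomain))$, that it is globally Lipschitz, and that it obeys the pointwise bounds \eqref{eq:nem-op_bounds} whenever the argument stays in $\overline{Y}$. Throughout I identify $\integrablefunctions^\infty(\timedomain;\integrablefunctions^\infty(\spatialdomain))$ with $\integrablefunctions^\infty(\timedomain\times\spatialdomain)$ and argue pointwise almost everywhere in $(\timevariable,\spatialvariables)$. Since $\damageprocess(\timevariable,\spatialvariables,\boldsymbol{f}(\timevariable,\spatialvariables))$ is literally meaningful only where $\boldsymbol{f}(\timevariable,\spatialvariables)\in\overline{Y}$, I first fix, for almost every $(\timevariable,\spatialvariables)$, a Lipschitz extension of $\damageprocess(\timevariable,\spatialvariables,\cdot)$ from $\overline{Y}$ to all of $\mathbb{R}^{\dimension^2}$ (for instance by precomposition with the nearest-point projection $\projection_{\overline{Y}}$ onto the closed convex ball $\overline{Y}$, which is $1$-Lipschitz); this makes $\nemytskiioperator$ defined on all of $\integrablefunctions^\infty(\timedomain;\integrablefunctions^\infty(\spatialdomain))^{\dimension^2}$ without enlarging either the essential supremum of $\damageprocess$ or its Lipschitz seminorm in $\boldsymbol{y}$. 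This is exactly the $\continuousfunctions^{m,\lambda}_{\mathrm{loc}}(\mathbb{R}^{\dimension^2})$ viewpoint alluded to in Remark~\ref{rem:comp_cond}\,\eqref{rem:comp_cond:bound_on_y}.

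The main obstacle is the measurability of $(\timevariable,\spatialvariables)\mapsto\damageprocess(\timevariable,\spatialvariables,\boldsymbol{f}(\timevariable,\spatialvariables))$, which is the standard difficulty for Nemytskii operators and where the Carath\'eodory structure of $\damageprocess$ enters. I would follow \cite{ApZa90, FrTr10}: approximate $\boldsymbol{f}$ by a sequence of simple functions $\boldsymbol{f}_n=\sum_k\boldsymbol{y}_k\,\indicatorfunction_{A_k}$ with $\boldsymbol{f}_n\to\boldsymbol{f}$ almost everywhere. For each simple $\boldsymbol{f}_n$ the composition equals $\sum_k\damageprocess(\cdot,\cdot,\boldsymbol{y}_k)\,\indicatorfunction_{A_k}$, which is measurable because $(\timevariable,\spatialvariables)\mapsto\damageprocess(\timevariable,\spatialvariables,\boldsymbol{y}_k)$ lies in $\integrablefunctions^\infty(\timedomain;\integrablefunctions^\infty(\spatialdomain))$ for every fixed $\boldsymbol{y}_k$ (a property recorded just before the lemma). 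Invoking the almost-everywhere continuity of $\boldsymbol{y}\mapsto\damageprocess(\timevariable,\spatialvariables,\boldsymbol{y})$ gives $\damageprocess(\timevariable,\spatialvariables,\boldsymbol{f}_n(\timevariable,\spatialvariables))\to\damageprocess(\timevariable,\spatialvariables,\boldsymbol{f}(\timevariable,\spatialvariables))$ almost everywhere, and a pointwise almost-everywhere limit of measurable functions is measurable. Essential boundedness is then immediate: for almost every $(\timevariable,\spatialvariables)$ and every $\boldsymbol{y}$ one has $\abs{\damageprocess(\timevariable,\spatialvariables,\boldsymbol{y})}\le\subnorm{\continuousfunctions(\overline{Y})}{\damageprocess(\timevariable,\spatialvariables,\cdot)}\le\subnorm{\continuousfunctions^{1,1}(\overline{Y})}{\damageprocess(\timevariable,\spatialvariables,\cdot)}$, whose essential supremum over $(\timevariable,\spatialvariables)$ is the finite norm $\subnorm{\integrablefunctions^\infty(\timedomain;\integrablefunctions^\infty(\spatialdomain;\continuousfunctions^{1,1}(\overline{Y})))}{\damageprocess}$; hence $\nemytskiioperator(\boldsymbol{f})\in\integrablefunctions^\infty(\timedomain;\integrablefunctions^\infty(\spatialdomain))$.

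For the Lipschitz estimate I would use that the embedding $\continuousfunctions^{1,1}(\overline{Y})\hookrightarrow\continuousfunctions^{0,1}(\overline{Y})$ furnishes, uniformly in $(\timevariable,\spatialvariables)$, a finite constant $L\assign\esssup_{(\timevariable,\spatialvariables)}\subnorm{\continuousfunctions^{0,1}(\overline{Y})}{\damageprocess(\timevariable,\spatialvariables,\cdot)}$, unchanged by the projection extension since $\projection_{\overline{Y}}$ is $1$-Lipschitz and $\overline{Y}$ is convex. Then for any $\boldsymbol{f}_1,\boldsymbol{f}_2$ one has, almost everywhere,
\begin{equation*}
  \abs{\nemytskiioperator(\boldsymbol{f}_1)(\timevariable,\spatialvariables)-\nemytskiioperator(\boldsymbol{f}_2)(\timevariable,\spatialvariables)}
  =\abs{\damageprocess(\timevariable,\spatialvariables,\boldsymbol{f}_1(\timevariable,\spatialvariables))-\damageprocess(\timevariable,\spatialvariables,\boldsymbol{f}_2(\timevariable,\spatialvariables))}
  \le L\,\abs{\boldsymbol{f}_1(\timevariable,\spatialvariables)-\boldsymbol{f}_2(\timevariable,\spatialvariables)},
\end{equation*}
and taking the essential supremum over $(\timevariable,\spatialvariables)$ yields $\norm{\nemytskiioperator(\boldsymbol{f}_1)-\nemytskiioperator(\boldsymbol{f}_2)}\le L\,\norm{\boldsymbol{f}_1-\boldsymbol{f}_2}$, i.e.\ global Lipschitz continuity.

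Finally, for the bounds \eqref{eq:nem-op_bounds}, if $\norm{\boldsymbol{f}}\le\bar{y}$ then $\boldsymbol{f}(\timevariable,\spatialvariables)\in\overline{Y}$ for almost every $(\timevariable,\spatialvariables)$, so the extension coincides with $\damageprocess(\timevariable,\spatialvariables,\cdot)$ at $\boldsymbol{f}(\timevariable,\spatialvariables)$ and $\nemytskiioperator(\boldsymbol{f})(\timevariable,\spatialvariables)=\damageprocess(\timevariable,\spatialvariables,\boldsymbol{f}(\timevariable,\spatialvariables))$ with $\boldsymbol{f}(\timevariable,\spatialvariables)\in\overline{Y}$. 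Evaluating the defining constraint of $\damageprocesses$ in \eqref{eq:damage_sources} at $\boldsymbol{y}=\boldsymbol{f}(\timevariable,\spatialvariables)$ gives $0\le\nemytskiioperator(\boldsymbol{f})(\timevariable,\spatialvariables)\le\timedomainmax^{-1}(\damageconstant_1-\damageconstant_0)(1-\damageconstant_1)^\alpha$ almost everywhere, which is precisely \eqref{eq:nem-op_bounds}. Apart from the measurability argument every step is a direct pointwise estimate, so I expect the measurability of the superposition to be the only genuinely delicate point.
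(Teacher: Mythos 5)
Your overall route --- approximation by simple functions for measurability, the uniform Lipschitz bound in $\boldsymbol{y}$ coming from the $\continuousfunctions^{0,1}(\overline{Y})$-seminorm for the Lipschitz estimate, and reading \eqref{eq:nem-op_bounds} directly off the defining constraint of $\damageprocesses$ --- is essentially the paper's. Your preliminary extension of $\damageprocess(\timevariable,\spatialvariables,\cdot)$ by precomposition with the nearest-point projection onto $\overline{Y}$ is a sensible addition: the paper's own proof only treats $\|\boldsymbol{f}\|\le\bar y$ even though the statement declares $\nemytskiioperator$ on all of $\integrablefunctions^\infty(\timedomain;\integrablefunctions^\infty(\spatialdomain))^{\dimension^2}$, and your projection closes that mismatch without enlarging any of the relevant constants.

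There is, however, one genuine gap: the opening identification of $\integrablefunctions^\infty(\timedomain;\integrablefunctions^\infty(\spatialdomain))$ with $\integrablefunctions^\infty(\timedomain\times\spatialdomain)$ is not valid. The Bochner space requires $\timevariable\mapsto\nemytskiioperator(\boldsymbol{f})(\timevariable,\cdot)$ to be \emph{strongly} measurable, i.e.\ an a.e.\ limit of $\integrablefunctions^\infty(\spatialdomain)$-valued simple functions of time, and this is strictly stronger than joint measurability on the product: the function $(\timevariable,\spatialvariable)\mapsto\indicatorfunction_{\{\spatialvariable\le\timevariable\}}$ lies in $\integrablefunctions^\infty(\timedomain\times\spatialdomain)$ but not in $\integrablefunctions^\infty(\timedomain;\integrablefunctions^\infty(\spatialdomain))$, since distinct times give elements at $\integrablefunctions^\infty(\spatialdomain)$-distance one, so the range is not essentially separably valued. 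Your argument, which approximates only $\boldsymbol{f}$ pointwise in $(\timevariable,\spatialvariables)$, delivers joint measurability and hence membership in the larger space $\integrablefunctions^\infty(\timedomain\times\spatialdomain)$, which is weaker than the asserted codomain (and the Bochner structure is what the subsequent ODE-in-time analysis of the damage evolution actually uses). The paper repairs exactly this with a second approximation stage: it picks simple functions $\damageprocess_j\in\simplefunctions\big(\timedomain;\integrablefunctions^\infty(\spatialdomain;\continuousfunctions^{1,1}(\overline{Y}))\big)$ and $\boldsymbol{f}_j\in\simplefunctions\big(\timedomain;\integrablefunctions^\infty(\spatialdomain)^{\dimension^2}\big)$ converging for a.e.\ $\timevariable$ in the respective norms --- possible precisely because $\damageprocess$ and $\boldsymbol{f}$ are themselves assumed to lie in Bochner spaces --- and notes that the composed functions are again simple in time and converge to $\nemytskiioperator(\boldsymbol{f})(\timevariable,\cdot)$ in $\integrablefunctions^\infty(\spatialdomain)$ by the same Lipschitz estimate you use, now taken at the level of $\integrablefunctions^\infty(\spatialdomain)$-norms. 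With that one additional step your proof is complete; everything else (Lipschitz continuity and the bounds) goes through as you wrote it.
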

\begin{proof}
Let $\boldsymbol{f}\in$ $\integrablefunctions^\infty\big(\timedomain;$ $\integrablefunctions^\infty(\spatialdomain)^{\dimension^2}\big)$ with $\|\boldsymbol{f}\|\le\bar y$ be fixed.
Boundedness of $\timevariable\mapsto\nemytskiioperator(\boldsymbol{f})(\timevariable,\cdot)\in\integrablefunctions^\infty\big(\timedomain;\integrablefunctions^\infty(\spatialdomain)\big)$ immediately follows from the definition of $\damageprocesses$.
It remains to show measurability.
To this end, we introduce functions
\begin{equation}\label{eq:nem-op_fixed_time}
  \damageprocess(\timevariable,\cdot)\in\integrablefunctions^\infty\big(\spatialdomain;\continuousfunctions^{1,1}(\overline{Y})\big),\quad %
  \boldsymbol{f}(\timevariable,\cdot)\in\integrablefunctions^\infty(\spatialdomain)
\end{equation}
for almost every $\timevariable\in\timedomain$.
These functions are measurable regarding the spatial variable.
Hence, there exist sequences of simple functions $\damageprocess_j(\timevariable,\cdot)\in\simplefunctions\big(\spatialdomain;\continuousfunctions^{1,1}(\overline{Y})\big)$ and $\boldsymbol{f}_j(\timevariable,\cdot)\in\simplefunctions(\spatialdomain)$ such that
  \begin{equation}\label{eq:nem-op_converging_simple_funtions_fixed_time}
    \damageprocess_j(\timevariable,\cdot)\to\damageprocess(\timevariable,\cdot)\text{ in } \continuousfunctions^{1,1}(\overline{Y}), \quad %
    \boldsymbol{f}_j(\timevariable,\cdot)\to\boldsymbol{f}(\timevariable,\cdot)\text{ in }\mathbb{R}^{\dimension^2}
  \end{equation}
  for $j\to\infty$ pointwise almost everywhere in $\spatialdomain$.
  We now take the composed sequence $\nemytskiioperator_j(\boldsymbol{f}_j)(\timevariable,\cdot):=(\damageprocess_j(\timevariable,\cdot))(\boldsymbol{f}_j(\timevariable,\cdot))$ in $\simplefunctions(\spatialdomain)$ and fix $\spatialvariables\in\spatialdomain$.
  This allows for
    \begin{multline}\label{eq:nem-op_conv_in_ess_bounded_func}
        |\nemytskiioperator_j(\boldsymbol{f}_j)(\timevariable,\spatialvariables)-\nemytskiioperator(\boldsymbol{f})(\timevariable,\spatialvariables)| \\
        \le \left\|\damageprocess_j(\timevariable,\spatialvariables)\right\|_{\continuousfunctions^{0,1}(\overline{Y})}|\boldsymbol{f}_j(\timevariable,\spatialvariable)-\boldsymbol{f}(\timevariable,\spatialvariables)|_{\mathbb{R}^{\dimension^2}}
        +\|\damageprocess_j(\timevariable,\spatialvariables)-\damageprocess(\timevariable,\spatialvariables)\|_{\continuousfunctions^{0,1}(\overline{Y})} \to 0
    \end{multline}
  for $j\to\infty$ and, thus, proving $(\spatialvariables\mapsto\nemytskiioperator(\boldsymbol{f})(\timevariable,\spatialvariable))\in\integrablefunctions^\infty(\spatialdomain)$ for all $\boldsymbol{f}$ and almost every $\timevariable\in\timedomain$.
  To see $(\timevariable\mapsto\nemytskiioperator(\boldsymbol{f})(\timevariable,\cdot))\in\integrablefunctions^\infty(\timedomain;\integrablefunctions^\infty(\spatialdomain))$ we employ similar arguments to ensure existence of sequences of simple functions $\damageprocess_j\in\simplefunctions\big(\timedomain;\integrablefunctions^\infty(\spatialdomain;\continuousfunctions^{1,1}(\overline{Y}))\big)$, $\boldsymbol{f}_j\in\simplefunctions\big(\timedomain;\integrablefunctions^\infty(\spatialdomain)^{\dimension^2}\big)$ converging pointwise almost everywhere in $\timedomain$, i.e.,
\begin{equation}\label{eq:nem-op_conv_simple_functions_fixed_position}
  \damageprocess_j(\timevariable)\to\damageprocess(\timevariable)\text{ in }\integrablefunctions^\infty(\spatialdomain;\continuousfunctions^{1,1}(\overline{Y})),\quad %
  \boldsymbol{f}_j(\timevariable)\to\boldsymbol{f}(\timevariable)\text{ in }\integrablefunctions^\infty(\spatialdomain)^{\dimension^2}
\end{equation}
for $j\to\infty$. We argue as before in \eqref{eq:nem-op_conv_in_ess_bounded_func} and infer
\begin{multline}\label{eq:nem-op_conv_in_time_and_space}
  |\nemytskiioperator_j(\boldsymbol{f}_j)(\timevariable,\spatialvariables)-\nemytskiioperator(\boldsymbol{f})(\timevariable,\spatialvariables)| \\
          \le \|\damageprocess_j(\timevariable)\|_{\integrablefunctions^{\infty}(\spatialdomain;\continuousfunctions^{0,1}(\overline{Y}))}\|\boldsymbol{f}_j(\timevariable)-\boldsymbol{f}(\timevariable)\|_{\integrablefunctions^{\infty}(\spatialdomain)^{\dimension^2}}
          +\|\damageprocess_j(\timevariable)-\damageprocess(\timevariable)\|_{\integrablefunctions^\infty(\spatialdomain;\continuousfunctions^{1,1}(\overline{Y}))}
      \to 0,
\end{multline}
  for $j\to\infty$ and for almost every $\timevariable\in\timedomain$.
  Using this last equation to infer Lipschitz-continuity for $\nemytskiioperator$ as well as proving \eqref{eq:nem-op_bounds} by using the bounds defined in $\eqref{eq:damage_sources}$ is straight-forward.
\end{proof}
  For the momentum balance equation we introduce the basic spatial function spaces
  \begin{equation}\label{eq:basic_spatial_functions}
    \spatialfunctions:=\{\spatialtestfunctions\in\sobolevfunctions^{1,2}(\spatialdomain)^{\dimension};\  \spatialtestfunctions = 0\textnormal{ on }\boundary_0\}, \quad %
    \boundaryfunctions:=\sobolevfunctions^{\frac{1}{2},2}(\boundary_1)^{\dimension}
  \end{equation}
  and denote their respective duals by $\spatialfunctions^*$ and $\boundaryfunctions^*$.
  The space of \emph{admissible tractions $\stressforces$ on $\boundary_1$} is denoted by $\integrablefunctions^\infty(\timedomain;\boundaryfunctions^*)$ and endowed with its usual norm.
  Later we will also make use of higher regularity results and change to a $\sobolevfunctions^{k,p}$ setting when need be.
  We introduce a family of operators $\elasticityoperator_{\damage}(\timevariable):\spatialfunctions\to\spatialfunctions^*$ for almost every $\timevariable\in\timedomain$ and its realization $\mathcal{\elasticityoperator}(\damage):\integrablefunctions^2(\timedomain;\spatialfunctions)\to \integrablefunctions^2(\timedomain;\spatialfunctions^*)$ via
    \begin{equation}\label{eq:Operator_A} %
      \begin{aligned}
        \big\langle\left(\mathcal{\elasticityoperator}(\damage)\displacements\right)(\timevariable),\spatialtestfunctions\big\rangle_{\integrablefunctions^2(\spatialdomain)} %
          &:= \big\langle \elasticityoperator_{\damage}(\timevariable)(\displacements(\timevariable)), \spatialtestfunctions\big\rangle_{\integrablefunctions^2(\spatialdomain)} \\
          &:= \int_{\spatialdomain} (1-\damage(\timevariable))\elasticitytensor\strains(\displacements(\timevariable)):\strains(\spatialtestfunctions)\, d\boldsymbol{\spatialvariable}.
      \end{aligned}
    \end{equation}
  Here, $\strains\in\sym(\dimension)$ denotes the Cauchy-Green strain tensor, i.e., $\strains(\displacements):=\frac{1}{2}(\nabla\displacements + \nabla\displacements^{\transpose})$ and $\sym(\dimension)$ refers to the set of symmetric $\dimension^2$ matrices.
  The symbol $\elasticitytensor$ denotes the fourth-order elasticity tensor, whose components are thought of as functions of only spatial coordinates (see, for example, \cite{PhCi88,MaHu94,EbZe90} for details).
  This is a common assumption in an anistropic inhomogeneous linear elastic setting, i.e., $\elasticitytensor_{ijkl}\in\integrablefunctions^\infty(\spatialdomain)$ for all $i$, $j$, $k$, $l$ $\in\{1,2,\dots,\dimension\}$.
  Additionally, the elasticity tensor is symmetric (see e.g.~\cite[Section 61.4D]{EbZe90}), i.e., $\elasticitytensor_{ijkl}=\elasticitytensor_{jikl}=\elasticitytensor_{klji}$ for all $i$, $j$, $k$, $l$ $\in\{1,2,\dots,\dimension\}$, and uniformly elliptic (see e.g.~\cite[ Chapter 3]{DuLi76}), i.e., there exists a positive real constant $c$ such that $\elasticitytensor\strains:\strains\ge c\strains:\strains$ for all $\strains\in\sym(\dimension)$. \\[2ex]
  To present a well-posed forward problem we introduce a regularized version of the displacement gradient.
  This is owed to the fact that we need to ensure an estimate alike the one presented that will be presented in \eqref{eq:Lipschitz_estimate_for_nem-op}.
  Such an estimate is needed to prove that the resulting fixed-point operator is k-contractive (see Proof of Theorem \ref{theo:direct_problem_well-posed}).
  Also see first paragraph in \Cref{subsec:coupled_problem}.
  To this end, we introduce the \emph{mollified gradient} via
  \begin{equation}\label{def:mollified_gradient}
      \mollifiedgradient\displacements_i := D_i^{\mu}\displacements := \mu^{-1}(\displacements(\cdot + \mu\boldsymbol{e}_i) - \displacements).
  \end{equation}
  We also employ the trace theorem to identify $\stressforces(\timevariable)\in\boundaryfunctions^*$ with $\stressforces(\timevariable)\circ\traceoperator\in\spatialfunctions^*$, where $\traceoperator$ denotes the trace operator.
  Henceforth, we will only write $\stressforces(\timevariable)\in\spatialfunctions^*$.
  \begin{Remark}\label{rem:on_mollifiers}
    We give some examples for different types of mollifiers that work in this setting.\\[2ex]
    \begin{inparaenum}[(a)]
      \item \label{rem:on_mollifiers:smooth_functions}%
      Let $\mu\in\mathbb{R}$ satisfy $\mu>0$. Using \emph{convolution with smooth functions} alike
      \begin{equation}
          \mollifier(\spatialvariables):=\left\{%
          \begin{array}{rl}
              \mu_0\exp\left(\frac{1}{|\spatialvariables|^2-\mu^2}\right),%
                & \tn{ for }|\spatialvariables|<\mu \\
               0\ , &\tn{ for }|\spatialvariables|\ge\mu,
          \end{array}
          \right.
      \end{equation}
      with $\mu_0$ such that $\int_{\mbb{R}^{\dimension}}\mollifier(\xi)\,d\xi=1$ or \emph{local spatial averaging via convolution with an indicator function} like
      $\mollifier:=\indicatorfunction_{\ball(0;\mu)}$ where $\bar{\mu}=\int_{\mbb{R}^{\dimension}}\indicatorfunction_{\ball(0;\mu)}(\xi)\,d\xi$.
      Then the \emph{mollified gradient of $f\in\sobolevfunctions_{loc}^{1,1}\left(\spatialdomain\right)$} can be introduced as $(\mollifiedgradient f)_i:=\mollifier\convolution\partial_i f$ for $i=1,\dots,\dimension$, where the right-hand side denotes the convolution product of $\mollifier$ and $\partial_i f$ for $i=1\dots\dimension$. \\[2ex]
      \item\label{rem:on_mollifiers:projection}%
      Approximation of $\nabla\displacements$ by finite dimensional subspaces through the orthogonal $L^2$-projection $\Pi:\integrablefunctions^2(\spatialdomain)\to\sobolevfunctions_h^{k,2}(\spatialdomain)$.
      Here, $\sobolevfunctions_h^{k,2}(\spatialdomain)$ denotes a finite dimensional subspace of $\sobolevfunctions^{k,2}(\spatialdomain)$.
      These kind of projections are common practive in Finite Element applications.
    \end{inparaenum}
  \end{Remark}
  After this preliminary work we are able to state the traction-driven problem in weak formulation.%
  \begin{Definition}[The traction-driven problem]\label{def:direct_problem}
    Provided that
      \begin{equation}\label{eq:data_pts_map}
        \forces\in \integrablefunctions^\infty(\timedomain;\spatialfunctions^*), \quad \stressforces\in \integrablefunctions^\infty(\timedomain;\boundaryfunctions^*),\quad\damage_0\in\damagefunctions_0, \quad\damageprocess\in\damageprocesses
      \end{equation}
      holds, we search for $\displacements\in \integrablefunctions^\infty(\timedomain;\spatialfunctions)$ fulfilling
    \begin{subequations}\label{eq:direct_problem}
      \begin{alignat}{2} \label{eq:EqOfMo}
        \mathcal{\elasticityoperator}(\damage)\displacements &= \forces + \stressforces & \quad\textnormal{in } & \integrablefunctions^\infty(\timedomain;\spatialfunctions^*),\\ \label{eq:DaEvEq}
        \damage' &= \big(1-\damage\big)^{-\alpha}\nemytskiioperator(\mollifiedgradient\displacements) &\quad\textnormal{in }& \integrablefunctions^\infty\big(\timedomain;\integrablefunctions^\infty(\spatialdomain)\big),\\ \label{eq:DaEvEq_initial_condition}
        \damage(0) &= \damage_0 &\quad\textnormal{in }& \integrablefunctions^\infty(\spatialdomain).
      \end{alignat}
    \end{subequations}%
  \end{Definition}
\begin{Remark}
  \begin{inparaenum}[(a)]
  \item
  We comment on the fact that \eqref{eq:EqOfMo} is well-defined.
  Since time is merely a parameter here,  an initial value for the displacements results from given $\forces$, $\stressforces$, and respective boundary conditions.
  In case of higher regularity in time we have $\displacements(0)=\mathcal{\elasticityoperator}(\damage)^{-1}(\forces(0) + \stressforces(0))$.\\[2ex]
  \item
  Under the given assumptions, we have $z\mapsto z^{-\alpha}\in\continuousfunctions([1-\damageconstant_1,1-\damageconstant_0])$ and thus $(1-d)^{-\alpha}\in\integrablefunctions^\infty(\timedomain;\integrablefunctions^\infty(\spatialdomain))$.
  Since $\nemytskiioperator(\mollifiedgradient\displacements)\in\integrablefunctions^\infty(\timedomain;\integrablefunctions^\infty(\spatialdomain))$ holds, \eqref{eq:DaEvEq} is presented in a meaningful way due to the fact that products of essentially bounded functions are essentially bounded, too.
  \end{inparaenum}
\end{Remark}
As will be proven in Theorem \ref{theo:direct_problem_well-posed}, this introduces an operator $\ptsmap\colon \integrablefunctions^\infty(\timedomain;\spatialfunctions^*)\times \integrablefunctions^\infty(\timedomain;\boundaryfunctions^*)\times\damagefunctions_0\times\damageprocesses\to\integrablefunctions^\infty(\timedomain;\spatialfunctions)\times\damagefunctions$ mapping forces $\forces$, traction $\stressforces$, initial damage $\damage_0$ and a damage process parameter $\damageprocess\in\damageprocesses$ to unique displacements $\displacements$ and damage $\damage$ thus solving the traction-driven problem.
Henceforth, we will fix $\forces$, $\stressforces$, $\damage_0$ and let $\ptsmap\colon\damageprocesses\to\integrablefunctions^\infty(\timedomain;\spatialfunctions)\times\damagefunctions$ denote the parameter-to-state map.
\begin{Definition}[Forward Operator]\label{def:forward_operator}
  We introduce the \emph{forward operator} $\forwardoperator\colon \damageprocesses\to \integrablefunctions^2(\timedomain\times\spatialdomain)^{\dimension}$ via $\Phi:=\pi_1\circ\ptsmap$, where $\pi_1$ denotes the projection onto the first component of $\ptsmap$ and we interpret $\integrablefunctions^{\infty}(\timedomain;\spatialfunctions)$ as a subset of $\integrablefunctions^2(\timedomain\times\spatialdomain)^{\dimension}$.
\end{Definition}

\section{Forward Problem}\label{sec:Forward_Problem}
In this section we will focus on the forward problem and start by showing that the traction-driven problem is indeed well-posed.
We also take a look at some results on higher regularity as this is a vital part for all further analysis.
After introducing the parameter-to-state map we establish important properties of the forward operator that are crucial for the numerical treatment of the inverse problem.
We prove Fréchet-differentiability and characterize the derivative as the solution of a coupled system of linear differential equations.
The adjoint operator of the linearized forward problem will also be established by its characterization of another system of differential equations.
\subsection{Well-Posedness}
One of the main results in this work is the proof of well-posedness in the sense of Hadamard for the traction-driven problem.
We state this fact in form of the following theorem.
  \begin{Theorem}\label{theo:direct_problem_well-posed} 
    Suppose the assumptions made in Definition \ref{def:direct_problem} hold.
    Then there exists a unique solution $\displacements$ to the traction-driven problem, which depends Lipschitz continuously on given data $\forces$, $\stressforces$, $\damage_0$, and $\damageprocess$ according to \eqref{eq:data_pts_map}.%
  \end{Theorem}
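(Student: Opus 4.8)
The plan is to recast the coupled problem \eqref{eq:direct_problem} as a fixed-point equation for the damage and to apply Banach's fixed-point theorem. For an input $\hat{\damage}\in\damagefunctions$ I would carry out two decoupled solves. First, since time enters \eqref{eq:EqOfMo} only as a parameter, I solve $\timedependentelasticityoperator(\hat{\damage})\displacements=\forces+\stressforces$ for almost every $\timevariable\in\timedomain$ by Lax--Milgram: the bilinear form $(\displacements,\spatialtestfunctions)\mapsto\int_{\spatialdomain}(1-\hat{\damage})\elasticitytensor\strains(\displacements):\strains(\spatialtestfunctions)\dx$ is bounded and, because partial damage gives $1-\hat{\damage}\ge 1-\damageconstant_1>0$, uniformly coercive via Korn's inequality (constant $\korn$), with a coercivity constant independent of $\timevariable$ and $\hat{\damage}$. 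This produces $\displacements=\displacements(\hat{\damage})\in\integrablefunctions^\infty(\timedomain;\spatialfunctions)$ with $\norm{\displacements(\timevariable)}_{\spatialfunctions}\le\constant\norm{\forces(\timevariable)+\stressforces(\timevariable)}_{\spatialfunctions^*}$. Second, I form the source $\nemytskiioperator(\mollifiedgradient\displacements)$ and solve, pointwise in $\spatialvariables$, the scalar ODE $\damage'=(1-\damage)^{-\alpha}\nemytskiioperator(\mollifiedgradient\displacements)$, $\damage(0)=\damage_0$, thereby defining $\fixedpointoperator(\hat{\damage})\assign\damage$.

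Next I would check that $\fixedpointoperator$ is a self-map. The map $\damage\mapsto(1-\damage)^{-\alpha}$ is Lipschitz on $[0,\damageconstant_1]$ (as $\alpha\ge 1$ and $1-\damage$ is bounded away from $0$) and, by Lemma \ref{lem:nem-op_well-posed}, $\nemytskiioperator(\mollifiedgradient\displacements)\in\integrablefunctions^\infty(\timedomain;\integrablefunctions^\infty(\spatialdomain))$, so Picard--Lindel\"of gives a unique $\damage$ with $\damage'\in\integrablefunctions^\infty$. Nonnegativity of $\nemytskiioperator$ makes $\damage$ nondecreasing, hence $\damage\ge\damage_0\ge 0$; and the upper bound \eqref{eq:nem-op_bounds} yields, as long as $\damage\le\damageconstant_1$, the estimate $\damage'\le(1-\damageconstant_1)^{-\alpha}\timedomainmax^{-1}(\damageconstant_1-\damageconstant_0)(1-\damageconstant_1)^{\alpha}=\timedomainmax^{-1}(\damageconstant_1-\damageconstant_0)$, so a continuity argument gives $\damage(\timevariable)\le\damageconstant_0+\timevariable\,\timedomainmax^{-1}(\damageconstant_1-\damageconstant_0)\le\damageconstant_1$. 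This is exactly the purpose of the tailored bound in the definition \eqref{eq:damage_sources} of $\damageprocesses$; in particular the iterates carry the uniform Lipschitz-in-time bound $\damage'\le\timedomainmax^{-1}(\damageconstant_1-\damageconstant_0)$, which lets me work on the closed (hence complete) subset of $\continuousfunctions(\overline{\timedomain};\integrablefunctions^\infty(\spatialdomain))$ they determine and recover the $\sobolevfunctions^{1,\infty}$-regularity, i.e.\ membership in $\damagefunctions$, a posteriori.

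The crux, and the step I expect to be the main obstacle, is the contraction estimate. For inputs $\hat{\damage}_1,\hat{\damage}_2$ with associated $\displacements_i$ and $\damage_i$, subtracting the two momentum problems and testing with $\displacements_1-\displacements_2$ gives, via coercivity and the uniform bound on $\displacements_2$, the estimate $\norm{\displacements_1(\timevariable)-\displacements_2(\timevariable)}_{\spatialfunctions}\le\constant\norm{\hat{\damage}_1(\timevariable)-\hat{\damage}_2(\timevariable)}_{\integrablefunctions^\infty(\spatialdomain)}$. The decisive point is to turn this $\spatialfunctions$-bound into an $\integrablefunctions^\infty$-bound on the Nemytskii source: here the mollified gradient \eqref{def:mollified_gradient} is essential, since it supplies a bound $\norm{\mollifiedgradient\displacements}_{\integrablefunctions^\infty}\le C(\mu)\norm{\displacements}_{\spatialfunctions}$ (the estimate foreshadowed in \eqref{eq:Lipschitz_estimate_for_nem-op}) that an ordinary gradient does not, after which Lipschitz continuity of $\nemytskiioperator$ gives $\norm{\nemytskiioperator(\mollifiedgradient\displacements_1)-\nemytskiioperator(\mollifiedgradient\displacements_2)}_{\integrablefunctions^\infty}\le\constant\norm{\hat{\damage}_1-\hat{\damage}_2}_{\integrablefunctions^\infty}$. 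Subtracting the two damage ODEs, using Lipschitz continuity of $(1-\cdot)^{-\alpha}$ together with the previous source estimate, and applying Gr\"onwall pointwise in $\spatialvariables$, then controls $\norm{\damage_1-\damage_2}$. To push the overall constant strictly below $1$ I would equip the set with the Bielecki-weighted norm $\esssup_{\timevariable\in\timedomain}e^{-\bilieckiconstant\timevariable}\norm{\damage(\timevariable)}_{\integrablefunctions^\infty(\spatialdomain)}$: the time integration inherent in the ODE solve contributes a factor $\sim\bilieckiconstant^{-1}$, so a sufficiently large $\bilieckiconstant$ makes $\fixedpointoperator$ $k$-contractive. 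Balancing this regularization against the genuinely strong nonlinearity of the coupling is the delicate part.

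Banach's fixed-point theorem then yields a unique $\damage\in\damagefunctions$, whence $\displacements=\displacements(\damage)$ is the unique solution of \eqref{eq:direct_problem} and the parameter-to-state map $\ptsmap$ is well-defined. For the asserted Lipschitz dependence on the data $(\forces,\stressforces,\damage_0,\damageprocess)$ I would rerun the same estimates while tracking each datum separately: $\displacements$ depends linearly, hence Lipschitz-continuously, on $\forces$ and $\stressforces$; the initial value $\damage_0$ enters the ODE additively; and a perturbation of $\damageprocess$ in $\integrablefunctions^\infty(\timedomain;\integrablefunctions^\infty(\spatialdomain;\continuousfunctions^{1,1}(\overline{Y})))$ perturbs the source by at most $\norm{\damageprocess_1-\damageprocess_2}$. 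Since the contraction constant is uniform in the data, the standard stability estimate for parameter-dependent fixed points transfers these bounds to $(\displacements,\damage)$, which gives the claimed Lipschitz continuous dependence and completes the proof.
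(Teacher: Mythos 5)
Your proposal follows the paper's own route: decouple the system, solve the momentum balance by Lax--Milgram with time as a parameter, solve the damage ODE pointwise in $\spatialvariables$, and close the loop with Banach's fixed-point theorem in a Bielecki-weighted norm; the self-map bound $\damage\le\damageconstant_0+\timevariable\,\timedomainmax^{-1}(\damageconstant_1-\damageconstant_0)\le\damageconstant_1$ and the Gronwall-based stability estimates are exactly those of Propositions \ref{prop:DaEvEq_well-posed} and \ref{prop:EqOfMo_well-posed}. The only structural difference is that you contract $\fixedpointoperator$ itself in the sup-in-time norm on a complete subset of $\continuousfunctions(\overline{\timedomain};\integrablefunctions^\infty(\spatialdomain))$ and recover the $\sobolevfunctions^{1,\infty}$-regularity a posteriori, whereas the paper contracts the iterate $\fixedpointoperator^2$ directly on $\damagefunctions$ (the extra application is what lets the time-derivative part of the $\sobolevfunctions^{1,\infty}$-norm inherit the $\bilieckiconstant^{-1}$ gain); both variants are legitimate, and yours is marginally more elementary.

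One step is stated too optimistically and, as written, fails for $\dimension\ge 2$: the bound $\|\mollifiedgradient\displacements\|_{\integrablefunctions^\infty(\spatialdomain)}\le C(\mu)\|\displacements\|_{\spatialfunctions}$ does not hold for the pure difference-quotient mollifier \eqref{def:mollified_gradient}, since $\spatialfunctions$-functions need not be essentially bounded in dimension two or three. The estimate you actually need is \eqref{eq:Lipschitz_estimate_for_nem-op}, whose right-hand side carries the $\sobolevfunctions^{1,p}(\spatialdomain)$-norm with $p>2$; closing the contraction then requires either the higher-integrability result of Proposition \ref{prop:EqOfMo_higher_int} (to bound $\|\displacements_1-\displacements_2\|_{\sobolevfunctions^{1,p}}$ by $\|\hat{\damage}_1-\hat{\damage}_2\|_{\integrablefunctions^\infty}$), or, for $\dimension=3$, an additional smoothing convolution in the mollifier as in Remark \ref{rem:on_mollifiers} and Lemma \ref{lem:g_is_Lipschitz}. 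Since you explicitly identify this as the decisive point of the argument, you should route it through $\sobolevfunctions^{1,p}$ rather than $\spatialfunctions$; with that correction the proof goes through as you describe.
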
%
  To prove Theorem \ref{theo:direct_problem_well-posed} we decouple momentum balance from damage evolution.
  By treating these subproblems individually and establishing their well-posedness in \Cref{prop:DaEvEq_well-posed,prop:EqOfMo_well-posed}, respectively, we lay the groundwork for employing Banach's fixed-point theorem to show existence and uniqueness of a solution to the coupled problem.
 Its Lipschitz-continuous dependence on given data can be shown by employing the individually verifyed properties for the decoupled problems.\\[2ex]
  The previous results then allow us to define a solution operator for the coupled problem mapping the data to the respective solution in Corollary \ref{cor:solution_operator_nonlinear_problem}.
\subsubsection{Damage Evolution}
We start with the analysis of the damage evolution need and propose the following.
  \begin{Proposition}[Well-posedness of decoupled damage evolution]\label{prop:DaEvEq_well-posed}%
    Provided that $\damage_0\in\damagefunctions_0$ and $y\in\integrablefunctions^{\infty}(\timedomain;\integrablefunctions^{\infty}(\spatialdomain)$ with $0\le y\le \timedomainmax^{-1}(\damageconstant_1-\damageconstant_0)(1-\damageconstant_1)^\alpha$ almost everywhere hold, the decoupled damage evolution%
    \begin{subequations}
      \begin{alignat}{2}\label{eq:dc_DaEvEq}
        \damage'%
        &=\left(%
        1-\damage%
        \right)^{-\alpha}%
        y %
        &\quad\tn{in }&\integrablefunctions^{\infty}(\timedomain;\integrablefunctions^{\infty}(\spatialdomain)), \\ %
	\label{eq:dc_DaEvEq_initial_condition} %
        \damage(0)&=\damage_0  &\quad\tn{in }&\integrablefunctions^{\infty}(\spatialdomain),
      \end{alignat}
    \end{subequations}
    is uniquely solved by the damage function $\damage\in\damagefunctions$ which Lipschitz-continuously depends on the data, i.e.,
    \begin{equation}\label{eq:damage_is_Lipschitz}
        \|\damage_1-\damage_2\|_{%
          \sobolevfunctions^{1,\infty}(\timedomain;\integrablefunctions^{\infty}(\spatialdomain))%
        }
        \le c\left(
        \|\damage_{10}-\damage_{20}\|_{\integrablefunctions^\infty(\spatialdomain)} + \|y_1-y_2\|_{\integrablefunctions^{\infty}(\timedomain;\integrablefunctions^{\infty}(\spatialdomain))}
        \right)
    \end{equation}
    holds for some constant $c>0$ and admissible data.
  \end{Proposition}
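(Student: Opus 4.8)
The plan is to exploit that spatially \eqref{eq:dc_DaEvEq} is only a parameter-dependent family of scalar ordinary differential equations: the point $\spatialvariables\in\spatialdomain$ enters as a mere parameter, so I would first solve, for almost every $\spatialvariables$, the pointwise problem $\damage'=(1-\damage)^{-\alpha}y$ with $\damage(0)=\damage_0$, and only then verify that the resulting object lies in $\sobolevfunctions^{1,\infty}(\timedomain;\integrablefunctions^\infty(\spatialdomain))$. The equation is separable: multiplying by $(1-\damage)^{\alpha}$ and integrating in time gives the closed form
\[
  \damage(\timevariable,\spatialvariables)=1-\Big[(1-\damage_0(\spatialvariables))^{\alpha+1}-(\alpha+1)\!\int_0^{\timevariable}\! y(\timestepsize,\spatialvariables)\ds\Big]^{\frac{1}{\alpha+1}}.
\]
Measurability in $(\timevariable,\spatialvariables)$ is inherited from $\damage_0$ and $\int_0^{\timevariable}y\ds$ through composition with continuous functions, and the formula delivers existence at once. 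Because $z\mapsto(1-z)^{-\alpha}$ is locally Lipschitz on $[0,1)$, standard ODE uniqueness applies on any time interval on which the trajectory stays below $1$, so this is the unique solution once the next step confirms the trajectory never reaches $1$.

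The decisive step is to show $\damage\in\damagefunctions$, i.e.~$0\le\damage\le\damageconstant_1$ a.e., since this keeps the radicand positive --- making the formula well defined and $\damage$ Lipschitz in time --- and confines the dynamics to the region where $(1-z)^{-\alpha}$ is well behaved. The lower bound $\damage\ge0$ is immediate because $y\ge0$ forces the bracket to be $\le(1-\damage_0)^{\alpha+1}\le1$. For the upper bound I would bound the bracket below at its worst time $\timevariable=\timedomainmax$ using $\damage_0\le\damageconstant_0$ and the calibrated admissibility bound $\int_0^{\timedomainmax}y\ds\le(\damageconstant_1-\damageconstant_0)(1-\damageconstant_1)^\alpha$ from \eqref{eq:damage_sources}, reducing the claim $\damage\le\damageconstant_1$ to
\[
  (1-\damageconstant_0)^{\alpha+1}-(1-\damageconstant_1)^{\alpha+1}\ge(\alpha+1)(\damageconstant_1-\damageconstant_0)(1-\damageconstant_1)^\alpha.
\]
Applying the mean value theorem to $s\mapsto(1-s)^{\alpha+1}$ rewrites the left-hand side as $(\alpha+1)(1-\xi)^\alpha(\damageconstant_1-\damageconstant_0)$ for some $\xi\in(\damageconstant_0,\damageconstant_1)$, and since $\xi<\damageconstant_1$ gives $(1-\xi)^\alpha\ge(1-\damageconstant_1)^\alpha$, the inequality follows. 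This is the single place where the precise form of the bound on $y$ is used, and I expect it to be the main obstacle; everything else is routine.

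For the Lipschitz estimate \eqref{eq:damage_is_Lipschitz} I would subtract the pointwise equations for two admissible data sets and split
\[
  (\damage_1-\damage_2)'=\big[(1-\damage_1)^{-\alpha}-(1-\damage_2)^{-\alpha}\big]y_1+(1-\damage_2)^{-\alpha}(y_1-y_2).
\]
On the invariant range $[0,\damageconstant_1]$ secured above, $z\mapsto(1-z)^{-\alpha}$ has a uniform Lipschitz constant and $(1-\damage_2)^{-\alpha}\le(1-\damageconstant_1)^{-\alpha}$, so integrating in time and applying Grönwall's inequality bounds $\|\damage_1-\damage_2\|_{\integrablefunctions^\infty(\timedomain;\integrablefunctions^\infty(\spatialdomain))}$ by $\|\damage_{10}-\damage_{20}\|_{\integrablefunctions^\infty(\spatialdomain)}+\|y_1-y_2\|_{\integrablefunctions^\infty(\timedomain;\integrablefunctions^\infty(\spatialdomain))}$ with a constant depending only on $\|y_1\|_\infty$, $\alpha$ and $\damageconstant_1$. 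Substituting this bound back into the split identity controls the time derivative in the same norm, upgrading the estimate to the full $\sobolevfunctions^{1,\infty}$ norm on the left of \eqref{eq:damage_is_Lipschitz} and closing the proof.
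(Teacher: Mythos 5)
Your proposal is correct, but it reaches existence and the invariant bounds by a genuinely different route than the paper. The paper never writes down a closed form: it treats \eqref{eq:dc_DaEvEq} in integrated form, shows the resulting operator $O_{\spatialvariables}$ maps $\continuousfunctions(\overline{\timedomain};[0,\damageconstant_1])$ into itself, and obtains the fixed point via Banach's contraction principle in a weighted Bielecki norm, with measurability in $\spatialvariables$ recovered from the sequence of successive approximations. You instead integrate the separable ODE explicitly and verify the invariance $0\le\damage\le\damageconstant_1$ by a one-line mean value theorem inequality; that argument is sound (your reduction is exactly the point where the calibration $\timedomainmax^{-1}(\damageconstant_1-\damageconstant_0)(1-\damageconstant_1)^{\alpha}$ in \eqref{eq:damage_sources} enters, and it makes that dependence more transparent than the paper's estimate \eqref{eq:f-p_operator_bounded}), and your closed form also gives Lipschitz continuity of $\timevariable\mapsto\damage(\timevariable,\cdot)$ into $\integrablefunctions^{\infty}(\spatialdomain)$ essentially for free, which neatly handles the $\sobolevfunctions^{1,\infty}$ regularity and the measurability-in-time issue. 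What the explicit formula does not buy you is reusability: the paper leans on this fixed-point scaffolding again for the coupled problem, for the linearized damage evolution in Lemma \ref{lem:O_D_is_differentiable}, and for the backward-in-time adjoint equation in Theorem \ref{theo:adjoint_of_lin_forward_operator}, none of which have separable right-hand sides, so the contraction argument is doing work beyond this one proposition. Two small points to tighten: state explicitly that uniqueness is meant within $\damagefunctions$ (there the nonlinearity $z\mapsto(1-z)^{-\alpha}$ is uniformly Lipschitz on $[0,\damageconstant_1]$ and Gr\"onwall applies directly, so you do not need to invoke local uniqueness near $z=1$ at all), and note that the chain rule for the a.e.\ time derivative of your formula is legitimate because the radicand stays in the compact interval $[(1-\damageconstant_1)^{\alpha+1},1]$ on which the $(\alpha+1)$-th root is $\continuousfunctions^1$. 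The final Gr\"onwall step for \eqref{eq:damage_is_Lipschitz} coincides with the paper's step (c).
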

  \begin{proof}
    The proof is done in four steps. In%
  \begin{inparaenum}[(a)]
    \item\!\!, we treat the spatial variable as a parameter and show that for almost every $\spatialvariables\in\spatialdomain$ exists a function $\omega\in\continuousfunctions^{0,1}(\overline{\timedomain};\,[0,\damageconstant_1])$ such that
    \begin{equation}\label{eq:DaEvEq_integrated}
      \omega(\timevariable)%
      =\damage_0(\spatialvariables) %
      +\int_0^\timevariable(1-\omega(\xi))^{-\alpha} %
      y(\xi,\spatialvariables)\ \!d\xi %
    \end{equation}
    holds. In \item\!\!, we then construct a time- and space-dependent damage function using the previous results and show that this one solves the decoupled damage evolution problem. We establish validity of inequality \eqref{eq:damage_is_Lipschitz} in \item.\\[2ex]
  \end{inparaenum}
  \begin{inparaenum}[(a)]
   \item Let $\omega\in\continuousfunctions(\overline{\timedomain};[0,\damageconstant_1])$ with a fixed $\spatialvariables\in\spatialdomain$ such that inequalities \eqref{eq:initial_damage} for $\damage_0$ and \eqref{eq:damage_sources} for $\damageprocess$ hold. We introduce
   \begin{equation}\label{eq:defining_O}
     (O_{\spatialvariables}\omega)(\timevariable):=\damage_0(\spatialvariables) + \int_0^\timevariable(1-\omega(\xi))^{-\alpha}y(\xi,\spatialvariables)\ \!d\xi
   \end{equation}
   and propose that this induces an operator $O_{\spatialvariables}\colon\continuousfunctions(\overline{\timedomain};[0,\damageconstant_1])\to\continuousfunctions(\overline{\timedomain};[0,\damageconstant_1])$ for almost all $\spatialvariables\in\spatialdomain$. This holds true, since, firstly, $O_{\spatialvariables}\omega$ is actually Lipschitz-continuous in time because of
   \begin{equation}\label{eq:O_is_Lipschitz_in_time}
     |(O_{\spatialvariables}\omega)(\timevariable_1)-(O_{\spatialvariables}\omega)(\timevariable_2)| %
      \le (1-\damageconstant_1)^{-\alpha}\timedomainmax^{-1}(\damageconstant_1-\damageconstant_0)(1-\damageconstant_1)^{\alpha}|\timevariable_1-\timevariable_2|.
   \end{equation}
  The term is bounded as well for all $\timevariable\in\overline{\timedomain}$ as
   \begin{equation}\label{eq:f-p_operator_bounded}
     0 %
       \le
         \damage_0(\spatialvariables) + \int_0^\timevariable %
         (1-\omega(\xi))^{-\alpha}y(\xi, \spatialvariables)\,d\xi %
         = (O_{\spatialvariables}\omega)(\timevariable) %
       \le \damageconstant_1
   \end{equation}
   shows. Secondly, $O_{\spatialvariables}$ is Lipschitz as for any $\omega_1, \omega_2\in\continuousfunctions(\overline{\timedomain};[0,\damageconstant_1])$
   \begin{equation}\label{eq:O_is_Lipschitz_in_d}
     \Big|(O_{\spatialvariables}\omega_1)(\timevariable)-(O_{\spatialvariables}\omega_2)(\timevariable)\Big| %
       \le c_{\spatialvariables} \|\omega_1-\omega_2\|_{\continuousfunctions(\bar\timedomain)}
   \end{equation}
   holds for an arbitrary $\timevariable\in\overline{\timedomain}$. Introducing the weighted norm
   \begin{equation}\label{eq:Biliecki_norm}
     \|f\|_{\bilieckiconstant} := \max_{\timevariable\in\bar\timedomain}\left(\exp(-\bilieckiconstant\timevariable)|f(\timevariable)|\right)
   \end{equation}for a fixed $\bilieckiconstant>0$ and arguing similarly to \eqref{eq:O_is_Lipschitz_in_d} leads to
   \begin{equation}\label{eq:O_is_Lipschitz_in_lambda-norm}
     \|O_{\spatialvariables}\omega_1-O_{\spatialvariables}\omega_2\|_\bilieckiconstant \le \bilieckiconstant^{-1}c_{\spatialvariables}\|\omega_1-\omega_2\|_\bilieckiconstant
   \end{equation}
  for $c_{\spatialvariables}>0$. Choosing $\bilieckiconstant$ large enough makes $O_{\spatialvariables}$ \emph{k-contractive} on $\continuousfunctions(S;\,[0,\damageconstant_1])$ endowed with $\|\cdot\|_\bilieckiconstant$.
  Employing \emph{Banach's fixed-point theorem} guarantees the existence and uniqueness of a fixed-point $O_{\spatialvariables}\omega_{\spatialvariables}=\omega_{\spatialvariables}\in\continuousfunctions^{0,1}(\timedomain;[0,\damageconstant_1])$ endowed with its standard norm.
  The norm $\|\cdot\|_{\bilieckiconstant}$ is equivalent to the maximum norm on $\continuousfunctions(\bar{\timedomain};[0,\damageconstant_1])$.
  The key element is inequality \eqref{eq:O_is_Lipschitz_in_time}. Since $\spatialvariables\in\spatialdomain$ was chosen arbitrarily, this is true almost everywhere in $\spatialdomain$. We note that $c_{\spatialvariables}$ in \eqref{eq:O_is_Lipschitz_in_d} and \eqref{eq:O_is_Lipschitz_in_lambda-norm} is uniformly bounded as a direct consequence of \eqref{eq:damage_sources} and \eqref{eq:defining_O}.\\[2ex]%
  \item We introduce $\damage(\timevariable,\spatialvariables):=\omega_{\spatialvariables}(\timevariable)$ and note that this defines $\damage(\timevariable,\cdot)$ almost everywhere in $\spatialdomain$.  We immediately see that for almost every $\timevariable\in\timedomain$
   \begin{equation}\label{eq:bounds_damage_functions}
     |\damage(\timevariable,\spatialvariables)| \le \damageconstant_1, \quad %
     |\damage'(\timevariable,\spatialvariables)| %
       \le |\damageconstant_1-\damageconstant_0|\timedomainmax^{-1}
  \end{equation}
   hold almost everywhere in $\spatialdomain$. We introduce the set of Lebesgue-measurable functions from $\spatialdomain$ to a Banach space $Y$ by $\measurablefunctions(\spatialdomain;Y)$ and prove $\damage(\timevariable,\cdot)\in\measurablefunctions(\spatialdomain;\mathbb{R})$ for almost all $\timevariable\in\timedomain$ next. To this end, we take the \emph{sequence of successive approximation}, i.e., we define recursively $\damage_{n+1}(\timevariable,\spatialvariables):=O_{\spatialvariables}\damage_n(\timevariable,\spatialvariables)$ and take the initial damage $\damage_0$ to be the first element in this sequence. Since pointwise limits of measurable functions are also measurable and $\damage_0\in\measurablefunctions(\spatialdomain;\mathbb{R})$ by design, we can infer from \eqref{eq:bounds_damage_functions} that $\damage(\timevariable)\in\integrablefunctions^{\infty}(\spatialdomain)$ holds almost everywhere in time. So far, we have shown that the right-hand side of \eqref{eq:dc_DaEvEq} lies in $\integrablefunctions^{\infty}(\spatialdomain)$ for almost every point in time. Recalling inequality \eqref{eq:O_is_Lipschitz_in_time} together with \emph{Rademacher's theorem} then establishes $\damage'(\timevariable)\in\measurablefunctions(\spatialdomain;\mathbb{R})$.
   Collecting all results then shows $\damage\in\damagefunctions$.\\[2ex]
  \item Subtracting the generalized derivatives $\damage_1',\damage_2'$, a subsequent integration of the results over $[0,\timevariable]$, applying the mean value theorem and utilizing \emph{Gronwall's lemma} yield
  \begin{equation}\label{eq:DaEvEq_estimate_1}
    \|\damage_1-\damage_2\|_{%
    \integrablefunctions^\infty(\timedomain;\,\integrablefunctions^\infty(\spatialdomain)) %
    } \\
    \le c\bigg(
      \|\damage_{10}-\damage_{20}\|_{\integrablefunctions^\infty(\spatialdomain)} + \|y_1-y_2\|_{\integrablefunctions^{\infty}(\timedomain;\integrablefunctions^{\infty}(\spatialdomain))}
    \bigg)
  \end{equation}
  for some generic positive constant $c$.
  This inequality is later used to show well-posedness of the forward operator. Then, time and space dependent displacement gradients will take the place of $\mbf{y_1},\mbf{y_2}$. %
  Based on \eqref{eq:dc_DaEvEq} and the inequalities satisfied by $\damage\in\damagefunctions$ we see that
  \begin{equation}\label{eq:d_continuously_depends_on_data}
    |\damage_1'(\timevariable,\spatialvariables)-\damage_2'(\timevariable,\spatialvariables)| %
    \le c\bigg(%
      |\damage_1(\timevariable,\spatialvariables) - \damage_2(\timevariable,\spatialvariables)| %
     + \|y_1-y_2\|_{\integrablefunctions^{\infty}(\timedomain;\integrablefunctions^{\infty}(\spatialdomain))} %
      \bigg)
  \end{equation}
  holds for some constant $c>0$. Combining the last two inequalities completes the proof.
  \end{inparaenum}
  \end{proof}
In view of later sections, particularly when showing that the inverse problem is ill-posed in Sections \ref{sec:Ill_Posedness_of_Nonlinear_Inverse_Problem} and \ref{sec:strong_tangential_cone_condition}, we need to establish stronger regularity results than the previous one.
\begin{Proposition}\label{prop:DaEvEq_higher_regularity}
    Provided the conditions from Proposition \ref{prop:DaEvEq_well-posed} are met and, additionally, $\damage_0\in\damagefunctions_0\cap\sobolevfunctions^{l,\infty}(\spatialdomain)$ and $y\in\sobolevfunctions^{k,\infty}(\timedomain;\sobolevfunctions^{l,\infty}(\spatialdomain))$ hold for $k,l\ge 0$. Then the solution exhibits higher regularity, i.e., $\damage\in\damagefunctions\cap\sobolevfunctions^{k+1,\infty}(\timedomain;\sobolevfunctions^{l,\infty}(\spatialdomain))$ and the estimate
    \begin{equation}\label{eq:damage_is_Lipschitz_in_stronger_norm}
      \|\damage_1-\damage_2\|_{%
          \sobolevfunctions^{k+1,\infty}(\timedomain;\sobolevfunctions^{l,\infty}(\spatialdomain))%
        }
        \le \constant\left(
        \|\damage_{10}-\damage_{20}\|_{\sobolevfunctions^{l,\infty}(\spatialdomain)} + \|y_1-y_2\|_{\sobolevfunctions^{k,\infty}(\timedomain;\sobolevfunctions^{l,\infty}(\spatialdomain))}
        \right)
    \end{equation}
    is valid.
\end{Proposition}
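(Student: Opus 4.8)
The plan is to use that \eqref{eq:dc_DaEvEq} is a pointwise-in-space ODE in time, so that time regularity can be read off directly by differentiating the equation, whereas spatial regularity is extracted from the equivalent Volterra integral equation \eqref{eq:DaEvEq_integrated}. The solution $\damage\in\damagefunctions$ is already supplied by \Cref{prop:DaEvEq_well-posed}; only the additional regularity and the stronger estimate remain. The repeated workhorse is that the scalar map $z\mapsto(1-z)^{-\alpha}$ is $\continuousfunctions^\infty$ on the compact interval $[0,\damageconstant_1]$ with all derivatives bounded there, which is exactly where $\damageconstant_1<1$ keeps us away from the singularity at $z=1$. I would combine this with the facts that $\sobolevfunctions^{l,\infty}(\spatialdomain)$ is a Banach algebra invariant under composition with such smooth functions (a Leibniz / Fa\`a di Bruno computation), and that $\sobolevfunctions^{k,\infty}(\timedomain;\sobolevfunctions^{l,\infty}(\spatialdomain))$ may be identified with the functions whose mixed derivatives $\partial_\timevariable^i\partial_{\spatialvariables}^\beta\damage$ lie in $\integrablefunctions^\infty(\timedomain\times\spatialdomain)$ for all $i\le k$, $|\beta|\le l$.

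First I would establish spatial regularity by induction on the order of the spatial multi-index. Differentiating \eqref{eq:DaEvEq_integrated} once in $\spatialvariables$ (justified by difference quotients and the Lipschitz estimate \eqref{eq:damage_is_Lipschitz}) gives the linear Volterra equation
\begin{equation*}
  \partial_{\spatialvariables}\damage(\timevariable)=\partial_{\spatialvariables}\damage_0+\int_0^\timevariable\big[\alpha(1-\damage)^{-\alpha-1}\,\partial_{\spatialvariables}\damage\,\,y+(1-\damage)^{-\alpha}\,\partial_{\spatialvariables}y\big]\,d\xi,
\end{equation*}
in which $\partial_{\spatialvariables}\damage$ is multiplied only by the bounded coefficient $\alpha(1-\damage)^{-\alpha-1}y$, so Gronwall's lemma yields $\partial_{\spatialvariables}\damage\in\integrablefunctions^\infty(\timedomain\times\spatialdomain)$ with the expected bound. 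Differentiating to higher order produces, for each $\beta$ with $|\beta|\le l$, a Volterra equation for $\partial_{\spatialvariables}^\beta\damage$ whose forcing involves only already-controlled lower-order spatial derivatives, and the same Gronwall step closes the induction, giving $\damage\in\integrablefunctions^\infty(\timedomain;\sobolevfunctions^{l,\infty}(\spatialdomain))$.

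The time regularity then follows by a bootstrap on \eqref{eq:dc_DaEvEq} that needs no Gronwall argument, since each time derivative is given explicitly: from $\damage'=(1-\damage)^{-\alpha}y$ and the composition and algebra properties above, the case $k=0$ is immediate, as the right-hand side lies in $\integrablefunctions^\infty(\timedomain;\sobolevfunctions^{l,\infty}(\spatialdomain))$, whence $\damage\in\sobolevfunctions^{1,\infty}(\timedomain;\sobolevfunctions^{l,\infty}(\spatialdomain))$. Assuming $\damage\in\sobolevfunctions^{k,\infty}(\timedomain;\sobolevfunctions^{l,\infty}(\spatialdomain))$ for the inductive step, both $(1-\damage)^{-\alpha}$ and $y$ lie in this space, which is closed under products, so $\damage'=(1-\damage)^{-\alpha}y$ does too, yielding $\damage\in\sobolevfunctions^{k+1,\infty}(\timedomain;\sobolevfunctions^{l,\infty}(\spatialdomain))$.

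For the estimate \eqref{eq:damage_is_Lipschitz_in_stronger_norm} I would subtract the two equations and split
\begin{equation*}
  \damage_1'-\damage_2'=\big[(1-\damage_1)^{-\alpha}-(1-\damage_2)^{-\alpha}\big]\,y_1+(1-\damage_2)^{-\alpha}\,\big[y_1-y_2\big],
\end{equation*}
differentiate up to orders $k$ in time and $l$ in space, and bound each term in the $\sobolevfunctions^{k,\infty}(\timedomain;\sobolevfunctions^{l,\infty}(\spatialdomain))$-norm using the algebra estimates and the local Lipschitz continuity of the substitution operator $\damage\mapsto(1-\damage)^{-\alpha}$ and its derivatives on $[0,\damageconstant_1]$. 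After integration in time the term carrying the top-order derivatives of $\damage_1-\damage_2$ sits under a time integral with a bounded coefficient, and a final Gronwall argument absorbs it to produce \eqref{eq:damage_is_Lipschitz_in_stronger_norm}. I expect the main obstacle to be precisely the two composition claims --- that $\damage\mapsto(1-\damage)^{-\alpha}$ maps $\sobolevfunctions^{k,\infty}(\timedomain;\sobolevfunctions^{l,\infty}(\spatialdomain))$ into itself and is locally Lipschitz there; both reduce to the higher-order chain rule together with the uniform bounds on the derivatives of $(1-z)^{-\alpha}$ over $[0,\damageconstant_1]$, so the difficulty is bookkeeping rather than conceptual, the one genuinely delicate point being the Gronwall closure of the top-order term in the stability estimate.
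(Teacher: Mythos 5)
Your proposal is correct and follows essentially the same route as the paper's proof: difference quotients on the Volterra/integral representation plus Gronwall for the spatial derivatives, direct (weak) differentiation of the ODE combined with the smoothness of $z\mapsto(1-z)^{-\alpha}$ on $[0,\damageconstant_1]$ for the time derivatives, and a subtract-and-Gronwall argument for the stability estimate. The only differences are cosmetic — you treat space before time and make the Banach-algebra and Fa\`a di Bruno bookkeeping explicit, whereas the paper does time first and restricts attention to the representative case $k=l=1$.
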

  \begin{proof}
    We will prove this statement via an induction argument.
    The case $k=l=0$ is provided by the previous proposition, i.e., Proposition \ref{prop:DaEvEq_well-posed}.
    Without loss of generality we focus on the case $k=l=1$ for the induction step.\\[2ex]
    Note that $x\mapsto x^{-\alpha}\in\continuousfunctions^{\infty}_0([1-\damageconstant_1,1])$ holds.
    We start by showing higher regularity in time.
    To this end we formally differentiate the right-hand side of \eqref{eq:dc_DaEvEq} weakly with respect to time by employing product- and chain rule for Sobolev functions yielding
    \begin{equation}\label{eq:DaEvEq_higher_regularity_diff_rhs}
        \damage''=\left(\left(1-\damage\right)^{-\alpha}y\right)' = \alpha(1-\damage)^{-(\alpha +1)}\damage'\cdot y + (1-\damage)^{-\alpha}y'.
    \end{equation}
    Following from Proposition \ref{prop:DaEvEq_well-posed} and the imposed conditions on $y$, we can verify that this equality holds true in $\integrablefunctions^{\infty}(\timedomain;\integrablefunctions^{\infty}(\spatialdomain))$.
    Since weak derivatives are uniquely determined, we arrive at  $\damage\in\sobolevfunctions^{2,\infty}(\timedomain;\integrablefunctions^{\infty}(\spatialdomain))\cap\damagefunctions$.\\[2ex]
    For spatial regularity we argue as follows.
    We employ the notation introduced in \eqref{def:mollified_gradient} and make use approximate weak derivatives via difference quotients.
    To this end we take $\spatialvariable\in\spatialdomain_0\subset\subset\spatialdomain$ and $0<|h|<\text{dist}(\spatialdomain_0,\partial\spatialdomain)$.
    We start with the difference quotient for the damage variable and make use of its integral representation, i.e.,
    \begin{multline}
        \left|D_i^h\damage\right| = |\frac{1}{h}\damage_h - \damage| \\
        = \left|D_i^h\damage_0 + \frac{1}{h}\int_0^\timevariable\left(\left(1-\damage_h\right)^{-\alpha} - \left(1-\damage\right)^{-\alpha}\right)y_h + \left(\left(1-\damage\right)^{-\alpha}\left(y_h-y\right)\right)\,d\tau\right| \\
        \le C_1\int_0^{\timevariable}\left|\frac{1}{h}(\damage_h-\damage)\right|\,d\tau + C_2\int_0^{\timevariable}\left|\frac{y_h-y}{h}\right|\,d\tau + \left\|D_i^h\damage_0\right\|_{\integrablefunctions^{\infty}(\spatialdomain)} \\
        \le C_1\int_0^{\timevariable}\left|D_i^h\damage\right|\,d\tau + C.
    \end{multline}
    The second integral after the first inequality is bounded, since $y\in\sobolevfunctions^{2,\infty}(\timedomain;\sobolevfunctions^{1,\infty}(\spatialdomain))$.
    Employing Gronwall's lemma establishes $\|D_i^h\|_{\integrablefunctions^{\infty}(\spatialdomain_0)}\le C$ a.e. in $\timedomain$ and by standard arguments on interdependence between weak derivatives and difference quotients (see \cite{LaEv10, GiTr83} or \cite{MaDo10}, e.g.) we see that $\damage(\timevariable, \cdot)\in\sobolevfunctions^{1,\infty}(\spatialdomain)$ holds a.e. in $\timedomain$. \\[2ex]
    Formally differentiating \eqref{eq:dc_DaEvEq} again, but this time weakly with respect to space leads to a very similar equation like \eqref{eq:DaEvEq_higher_regularity_diff_rhs} but where all time derivatives on the right-hand side are replaced by partial ones and the first derivative in time on the left-hand side is as well.
    We can directly argue that the right-hand side lies in $\integrablefunctions^{\infty}(\spatialdomain)$. Similar arguments like in the beginning then reveal $\damage'(\timevariable, \cdot)\in\sobolevfunctions^{1,\infty}(\spatialdomain)$ a.e. in $\timedomain$.
    The same argument also holds true for $\damage''$, thus $\damage\in\sobolevfunctions^{2,\infty}(\timedomain;\sobolevfunctions^{1,\infty}(\spatialdomain))$. \\[2ex]
    Continuous dependence on the data in stronger norms shown in \eqref{eq:damage_is_Lipschitz_in_stronger_norm} then follows from standard arguments utilizing the structure of the damage variable.
  \end{proof}
\subsubsection{Momentum Balance}
  In the next part, we look at well-posedness of the decoupled momentum balance equation.
\begin{Proposition}[Well-posedness of the equation of motion]\label{prop:EqOfMo_well-posed}
  Provided that
  \begin{equation}\label{eq:EqOfMo_conditions_on_data}
    \damage\in\integrablefunctions^{\infty}(\timedomain;\integrablefunctions^{\infty}(\spatialdomain)), \quad %
    \forces\in \integrablefunctions^\infty(\timedomain;\,\spatialfunctions^*), \quad %
    \stressforces\in \integrablefunctions^\infty(\timedomain;\,\boundaryfunctions^*),
  \end{equation}
  where $\damage$ satisfies the bounds from \eqref{eq:damage_functions}, the traction-driven subproblem
  \begin{equation}\label{eq:EqOfMo_abstract_equation_1}
      \mathcal{A}(\damage)\displacements = \forces + \stressforces, \quad\text{in }\integrablefunctions^{\infty}(\timedomain;\spatialfunctions^*)
  \end{equation}
  is uniquely solvable.
  Furthermore, its solution $\displacements\in\integrablefunctions^\infty(\timedomain;\spatialfunctions)$ is Lipschitz-continuous with respect to the given data, i.e.,
  \begin{equation}\label{eq:dc_EqOfMo_Lipschitz}
    \begin{aligned}
      \|\displacements_1&-\displacements_2\|_{\integrablefunctions^\infty(\timedomain;\spatialfunctions)} \\
        &\le c \Big(\|\forces_1-\forces_2\|_{\integrablefunctions^\infty(\timedomain;\spatialfunctions^*)} + \|\stressforces_1-\stressforces_2\|_{\integrablefunctions^\infty(\timedomain;\boundaryfunctions^*)} \\
        &\quad + \|\elasticityoperator\|_{\linearoperators(\spatialfunctions;\spatialfunctions^*)} %
          \Big( \|\forces_2\|_{\integrablefunctions^\infty(\timedomain;\spatialfunctions^*)} + \|\stressforces_2\|_{\integrablefunctions^\infty(\timedomain;\boundaryfunctions^*)}\Big)\|\damage_1-\damage_2\|_{\integrablefunctions^{\infty}(\timedomain; \integrablefunctions^\infty(\spatialdomain))} \Big)
    \end{aligned}
  \end{equation}
  for some constant $c>0$.
  Choosing data with higher regularity in time, i.e.,
  \begin{equation}\label{eq:EqOfMo_data_higher_regularity}
    \damage\in\sobolevfunctions^{r,\infty}(\timedomain;\integrablefunctions^{\infty}(\spatialdomain)),\quad \forces\in\qisobolevfunctions^r(\timedomain;\spatialfunctions^*),\quad \stressforces\in\qisobolevfunctions^r(\timedomain;\boundaryfunctions^*)
  \end{equation}
  immediately entails $\displacements\in\qisobolevfunctions^r(\timedomain;\spatialfunctions)$.
\end{Proposition}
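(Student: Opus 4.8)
The plan is to treat time as a mere parameter and solve \eqref{eq:EqOfMo_abstract_equation_1} pointwise in $\timevariable$ by the Lax--Milgram lemma, then upgrade the pointwise solutions to Bochner-measurable ones. First I would record that for almost every $\timevariable\in\timedomain$ the bilinear form $a_{\damage(\timevariable)}(\displacements,\spatialtestfunctions):=\dup{\elasticityoperator_{\damage}(\timevariable)\displacements}{\spatialtestfunctions}$ from \eqref{eq:Operator_A} is bounded on $\spatialfunctions\times\spatialfunctions$, with constant controlled by $\|\elasticitytensor\|_{\integrablefunctions^\infty(\spatialdomain)}$ and the bound $1-\damage\le1$, and coercive. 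For coercivity I would combine the pointwise inequality $1-\damage(\timevariable)\ge1-\damageconstant_1>0$ with the uniform ellipticity of $\elasticitytensor$ to get $a_{\damage(\timevariable)}(\displacements,\displacements)\ge(1-\damageconstant_1)c\,\|\strains(\displacements)\|_{\integrablefunctions^2(\spatialdomain)}^2$, and then invoke Korn's second inequality---available because $\boundary_0$ has positive surface measure and $\spatialtestfunctions=0$ on $\boundary_0$---to pass to $a_{\damage(\timevariable)}(\displacements,\displacements)\ge(1-\damageconstant_1)c\,\korn\,\|\displacements\|_\spatialfunctions^2$. Crucially, this coercivity constant does not depend on $\timevariable$. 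Lax--Milgram then provides a unique $\displacements(\timevariable)\in\spatialfunctions$ with $\|\displacements(\timevariable)\|_\spatialfunctions\le\left((1-\damageconstant_1)c\,\korn\right)^{-1}\|(\forces+\stressforces)(\timevariable)\|_{\spatialfunctions^*}$; measurability of $\timevariable\mapsto\displacements(\timevariable)$ follows by approximating the data by simple functions in time and using continuous dependence of the Lax--Milgram solution on coefficient and right-hand side, so that taking the essential supremum gives $\displacements\in\integrablefunctions^\infty(\timedomain;\spatialfunctions)$.

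For the Lipschitz estimate \eqref{eq:dc_EqOfMo_Lipschitz} I would subtract the two realizations of \eqref{eq:EqOfMo_abstract_equation_1} and isolate $\mathcal{A}(\damage_1)(\displacements_1-\displacements_2)=(\forces_1-\forces_2)+(\stressforces_1-\stressforces_2)-(\mathcal{A}(\damage_1)-\mathcal{A}(\damage_2))\displacements_2$. The last term is, by \eqref{eq:Operator_A}, generated by the coefficient $\damage_2-\damage_1$, hence $\dup{(\mathcal{A}(\damage_1)-\mathcal{A}(\damage_2))\displacements_2}{\spatialtestfunctions}=\int_\spatialdomain(\damage_2-\damage_1)\elasticitytensor\strains(\displacements_2):\strains(\spatialtestfunctions)\,d\spatialvariables$, which is bounded by $\|\elasticityoperator\|_{\linearoperators(\spatialfunctions;\spatialfunctions^*)}\|\damage_1-\damage_2\|_{\integrablefunctions^\infty(\spatialdomain)}\|\displacements_2\|_\spatialfunctions\|\spatialtestfunctions\|_\spatialfunctions$. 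Applying the uniform coercivity of $\mathcal{A}(\damage_1)$, then controlling $\|\displacements_2\|_\spatialfunctions$ by $c(\|\forces_2\|_{\spatialfunctions^*}+\|\stressforces_2\|_{\boundaryfunctions^*})$ via the estimate from the first step, and finally taking the essential supremum over $\timevariable\in\timedomain$, reproduces exactly the claimed inequality. Uniqueness is already contained in the pointwise Lax--Milgram step.

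The higher-regularity claim I would obtain by induction on $r$, exploiting that the operator enters \eqref{eq:EqOfMo_abstract_equation_1} only through the parameter $\damage(\timevariable)$ and that $\mathcal{A}(\damage(\timevariable))^{-1}$ is uniformly bounded in $\timevariable$. For the step from $r$ to $r+1$ I would work with time difference quotients $D_\timestepsize\displacements:=\timestepsize^{-1}(\displacements(\cdot+\timestepsize)-\displacements)$: subtracting the equation at $\timevariable+\timestepsize$ and $\timevariable$ and dividing by $\timestepsize$ yields $\mathcal{A}(\damage(\cdot+\timestepsize))D_\timestepsize\displacements=D_\timestepsize(\forces+\stressforces)-(D_\timestepsize\mathcal{A})\displacements$, where $D_\timestepsize\mathcal{A}$ carries the coefficient $-D_\timestepsize\damage$. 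Because $\damage\in\sobolevfunctions^{r,\infty}$ keeps $D_\timestepsize\damage$ bounded in $\integrablefunctions^\infty$ and $\forces,\stressforces\in\qisobolevfunctions^r$ make $D_\timestepsize(\forces+\stressforces)$ bounded in $\integrablefunctions^2(\timedomain;\spatialfunctions^*)$, uniform coercivity bounds $D_\timestepsize\displacements$ in $\integrablefunctions^2(\timedomain;\spatialfunctions)$ independently of $\timestepsize$; passing $\timestepsize\to0$ identifies the weak time derivative and promotes $\displacements$ to one higher order, after which the inductive hypothesis closes the argument. I expect the main obstacle to be the bookkeeping in this last step: one must verify that the extra term $(D_\timestepsize\mathcal{A})\displacements$---the price paid for the time dependence of the coefficient $(1-\damage)$---genuinely lies in $\integrablefunctions^2(\timedomain;\spatialfunctions^*)$ with the regularity needed to feed the induction, together with the (routine but fiddly) measurability argument underpinning the base case.
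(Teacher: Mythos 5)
Your proposal follows essentially the same route as the paper: pointwise-in-time Lax--Milgram with coercivity from $1-\damage\ge 1-\damageconstant_1$ plus Korn's inequality, Bochner measurability via simple-function approximation of the data and continuous dependence of the Lax--Milgram solution, and the Lipschitz estimate by subtracting the two equations and testing with the difference of solutions. Your difference-quotient induction for the higher time regularity is a more explicit rendering of what the paper dismisses with ``time is merely a parameter,'' but it is the same underlying idea and is correct.
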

\begin{proof}
Let $\timevariable\in\timedomain$ be fixed such that $0\le\damage(\timevariable)\le\damageconstant_1$ and $\tilde\forces(\timevariable):=\forces(\timevariable) + \stressforces(\timevariable)$ in $\spatialfunctions^*$.
Note that $\mathcal{\elasticityoperator}(\damage)$ induces a symmetric, continuous and coercive bilinear form, where the upper bound $\damageconstant_1$ of $\damage$ and the inequalities of Korn- and Poincar\'e-type ensure its coerciveness (see \cite{PhCi88, DuLi76, EbZe90}). %
Then Lax-Milgram's theorem (see \cite{RaSh97, EbZe90} e.g.) guarantees a unique solution $\displacements(\timevariable)\in\spatialfunctions$ to
\begin{equation}\label{eq:EqOfMo_abstract_equation_2}
  \mathcal{\elasticityoperator}(\damage)\displacements(\timevariable)=\tilde\forces(\timevariable)\quad\textnormal{in }\spatialfunctions^*,
\end{equation}
which depends Lipschitz continuously on $\tilde\forces(\timevariable)$ and satisfies
\begin{equation}\label{eq:EqOfMo_bound_for_displacements}
 \|\displacements(\timevariable)\|_{\spatialfunctions}\le c(\damageconstant_1)\|\tilde\forces(\timevariable)\|_{\spatialfunctions^*}\text{ a.e. in } \timedomain.
\end{equation}
This induces a mapping $\displacements\colon\timedomain\to\spatialfunctions$ almost everywhere in $\timedomain$. We claim that $\displacements\in\integrablefunctions^\infty(\timedomain;\spatialfunctions)$ holds. \\[2ex]
We begin by showing $\displacements\in\measurablefunctions(\timedomain;\spatialfunctions)$. Since $\tilde\forces\in\measurablefunctions(\timedomain;\spatialfunctions^*)$ and $\damage\in\measurablefunctions(\timedomain;\integrablefunctions^\infty(\spatialdomain))$, there exist sequences of simple functions $\tilde\forces_j\in\simplefunctions(\timedomain;\spatialfunctions^*)$ and $\damage_j\in\simplefunctions(\timedomain;\integrablefunctions^\infty(\spatialdomain))$ converging towards $\tilde\forces$ in $\spatialfunctions^*$ and $\damage$ in $\integrablefunctions^\infty(\spatialdomain)$, respectively.
As a consequence of Lax-Migram's theorem, the existence of a sequence of simple functions $\displacements_j(\timevariable):=\mathcal{\elasticityoperator}^{-1}(\damage_j)\tilde\forces_j(\timevariable)\in\simplefunctions(\timedomain;\spatialfunctions)$ is guaranteed, which converges towards $\displacements$ in $\spatialfunctions$ almost everywhere in $\timedomain$ for $j\to\infty$.
This last part follows from
\begin{equation}\label{eq:convergence_of_simple_functions}
  \|\displacements(\timevariable)-\displacements_j(\timevariable)\|_{\spatialfunctions}
  \le\|(\elasticityoperator_{\damage}^{-1}(\timevariable)-\elasticityoperator_{\damage_j}^{-1}(\timevariable))\|_{\linearoperators(\spatialfunctions^*;\spatialfunctions)}\|\tilde\forces(\timevariable)\|_{\spatialfunctions^*}
  + \|\elasticityoperator_{\damage}^{-1}(\timevariable)\|_{\linearoperators(\spatialfunctions^*;\spatialfunctions)}\|(\tilde\forces(\timevariable)-\tilde\forces_j(\timevariable))\|_{\spatialfunctions^*}
\end{equation}
thus proving $\displacements\in\measurablefunctions(\timedomain;\spatialfunctions)$.
Therefore, $\displacements\in\integrablefunctions^\infty(\timedomain;\spatialfunctions)$ holds as a direct consequence of \eqref{eq:EqOfMo_abstract_equation_2} and $\tilde\forces\in\integrablefunctions^\infty(\timedomain;\spatialfunctions^*)$. Lipschitz continuity with respect to the given data can be established by subtracting the respective equations \eqref{eq:EqOfMo_abstract_equation_2} for two given data sets and by testing the resulting equation with the difference of the respective solutions.
Since time is merely a parameter here, too, our claim on higher regularity immediately follows from \eqref{eq:EqOfMo_abstract_equation_1} and the fact that the damage variable as well as the right-hand side show the prescribed regularity in time.
\end{proof}
In order to present the damage evolution equation while replacing $\boldsymbol{y}$ with $\nemytskiioperator(\mollifiedgradient\displacements)$ we need to make sure that $\mollifiedgradient\displacements$ is essentially bounded.
To this end we have look at results on higher regularity and integrability of the displacements' gradient. \\[2ex]
Imposing slightly more regularity on the right-hand side of the momentum balance equation results in a slightly better space for the solution, i.e., $\displacements\in\spatialfunctions\cap\sobolevfunctions^{1,p}(\spatialdomain)$ for some $p>2$.
This higher integrability has been in the focus of many researchers.
We refer to \cite{Ha-DiJoKnRe16, Ha-DiRe09, HeMe11, ShiWri99}.
\begin{Proposition}[Higher Integrability]\label{prop:EqOfMo_higher_int}
  Assuming the conditions of \Cref{prop:EqOfMo_well-posed} are met. Then there exists a $\bar{p}>2$ such that for all let $p\in[2,\bar{p}]$ and
  \begin{equation}
      \forces\in\integrablefunctions^{\infty}(\timedomain;\sobolevfunctions_{\boundary_0}^{-1,p}(\spatialdomain)^{\dimension}), \quad \stressforces\in\integrablefunctions^{\infty}(\timedomain;\sobolevfunctions^{-(1-\frac{1}{p}),p}(\boundary_1)^{\dimension}).
  \end{equation}
  the traction-driven problem is uniquely solvable with $\displacements\in\integrablefunctions^{\infty}(\timedomain;\sobolevfunctions^{1,p}(\spatialdomain)^{\dimension})$, which depends Lipschitz-continuously on the data, i.e.,
  \begin{multline}
       \|\displacements_1-\displacements_2\|_{\integrablefunctions^{\infty}(\timedomain;\sobolevfunctions^{1,p}(\spatialdomain)^{\dimension})} \\
          \le c \Big(\|\forces_1-\forces_2\|_{\integrablefunctions^{\infty}(\timedomain;\sobolevfunctions_{\boundary_0}^{-1,p}(\spatialdomain)^{\dimension})} + \|\stressforces_1-\stressforces_2\|_{\integrablefunctions^{\infty}(\timedomain;\sobolevfunctions^{-(1-\frac{1}{p}),p}(\boundary_1)^{\dimension})} \\
          + \|\damage_1-\damage_2\|_{\integrablefunctions^{\infty}(\timedomain; \integrablefunctions^\infty(\spatialdomain))} \Big).
  \end{multline}
\end{Proposition}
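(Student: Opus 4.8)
The plan is to lift the $\spatialfunctions=\sobolevfunctions^{1,2}$-theory of \Cref{prop:EqOfMo_well-posed} to the $\sobolevfunctions^{1,p}$-scale by applying a Meyers--Gr\"oger type higher-integrability result to the elliptic system frozen at each time $\timevariable$, and then to transport the resulting estimates through the Bochner structure. First I would fix $\timevariable\in\timedomain$ and read the momentum balance as the single elliptic system
\begin{equation}
  -\divergence\big((1-\damage(\timevariable))\,\elasticitytensor\strains(\displacements(\timevariable))\big)
  =\forces(\timevariable)+\stressforces(\timevariable)\quad\tn{in }\spatialdomain,
\end{equation}
with the mixed Dirichlet--Neumann conditions encoded in $\spatialfunctions$ and $\boundary_1$; after the trace identification of \Cref{sec:Setting} the right-hand side defines an element of $\sobolevfunctions^{-1,p}_{\boundary_0}(\spatialdomain)^{\dimension}$. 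By the bounds $0\le\damage\le\damageconstant_1<1$ from \eqref{eq:damage_functions} together with the symmetry and uniform ellipticity of $\elasticitytensor$, the coefficient field $(1-\damage(\timevariable))\elasticitytensor$ is symmetric, bounded above by $\elasticitytensor$, and uniformly elliptic with constant $(1-\damageconstant_1)\constant$ --- and all three structural quantities are \emph{independent of $\timevariable$}. Since $\spatialdomain$ is a bounded Lipschitz domain whose boundary splits into the closed sets $\boundary_0,\boundary_1$ along connected components, the configuration is regular in the sense required by \cite{Ha-DiJoKnRe16, Ha-DiRe09, HeMe11, ShiWri99}, so their higher-integrability results for elasticity systems with mixed boundary conditions apply: there exist $\bar p>2$ and a constant $C$, both determined solely by the above structural quantities and the geometry of $(\spatialdomain,\boundary_0,\boundary_1)$, such that for every $p\in[2,\bar p]$ the operator induced by $(1-\damage(\timevariable))\elasticitytensor$ is a topological isomorphism from $\{\spatialtestfunctions\in\sobolevfunctions^{1,p}(\spatialdomain)^{\dimension};\,\spatialtestfunctions=0\text{ on }\boundary_0\}$ onto $\sobolevfunctions^{-1,p}_{\boundary_0}(\spatialdomain)^{\dimension}$, with $\|\displacements(\timevariable)\|_{\sobolevfunctions^{1,p}(\spatialdomain)^{\dimension}}\le C\,\|\forces(\timevariable)+\stressforces(\timevariable)\|_{\sobolevfunctions^{-1,p}_{\boundary_0}}$. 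The decisive point --- and the step I expect to be the main obstacle --- is precisely that $\bar p$ and $C$ can be chosen \emph{uniformly in $\timevariable$}; this is guaranteed because the only data entering the Meyers--Gr\"oger constants are the ellipticity and boundedness bounds of the coefficient, which the uniform damage bound renders $\timevariable$-independent.

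Next I would pass from this pointwise-in-time estimate to the Bochner statement. By uniqueness of weak solutions the $\sobolevfunctions^{1,p}$-solution coincides almost everywhere with the $\spatialfunctions$-solution constructed in \Cref{prop:EqOfMo_well-posed}. Measurability of $\timevariable\mapsto\displacements(\timevariable)$ as a $\sobolevfunctions^{1,p}(\spatialdomain)^{\dimension}$-valued map follows by the very same simple-function approximation used there, now invoking continuity of $\damage\mapsto\elasticityoperator_{\damage}^{-1}$ in $\linearoperators(\sobolevfunctions^{-1,p}_{\boundary_0};\sobolevfunctions^{1,p})$ --- itself a consequence of the perturbation identity established below --- together with separability of $\sobolevfunctions^{1,p}$ and Pettis' theorem. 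Taking the essential supremum in the uniform bound then yields $\displacements\in\integrablefunctions^{\infty}(\timedomain;\sobolevfunctions^{1,p}(\spatialdomain)^{\dimension})$.

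Finally, for the Lipschitz dependence I would subtract the equations for two admissible data sets and rearrange them into
\begin{equation}
  \elasticityoperator_{\damage_1}(\timevariable)\big(\displacements_1(\timevariable)-\displacements_2(\timevariable)\big)
  =\big(\forces_1-\forces_2\big)(\timevariable)+\big(\stressforces_1-\stressforces_2\big)(\timevariable)
  -\divergence\!\big((\damage_1-\damage_2)(\timevariable)\,\elasticitytensor\strains(\displacements_2(\timevariable))\big).
\end{equation}
The last term lies in $\sobolevfunctions^{-1,p}_{\boundary_0}$ with norm controlled by $\|(\damage_1-\damage_2)(\timevariable)\|_{\integrablefunctions^{\infty}(\spatialdomain)}\,\|\displacements_2(\timevariable)\|_{\sobolevfunctions^{1,p}(\spatialdomain)^{\dimension}}$. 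Applying the isomorphism estimate for $\elasticityoperator_{\damage_1}(\timevariable)$, bounding $\|\displacements_2(\timevariable)\|_{\sobolevfunctions^{1,p}}$ a priori through the estimate just obtained, and taking the essential supremum over $\timevariable$ produces the asserted inequality, with the factor $\|\displacements_2\|$ absorbed into the generic constant $\constant$ exactly as in \eqref{eq:dc_EqOfMo_Lipschitz}. The higher-regularity-in-time assertion, should it be needed, transfers verbatim from \Cref{prop:EqOfMo_well-posed} since time remains a mere parameter throughout.
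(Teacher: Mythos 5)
Your proposal is correct and follows the same route the paper takes: the paper's proof simply defers the fixed-time higher-integrability isomorphism to the cited references (\cite{Ha-DiJoKnRe16, Ha-DiRe09, HeMe11, ShiWri99}) and then repeats the ``time is merely a parameter'' argument of \Cref{prop:EqOfMo_well-posed}. You have merely spelled out the details the paper leaves implicit --- the uniformity of the Meyers--Gr\"oger constants in $\timevariable$ via the uniform damage bounds, the Pettis/simple-function measurability step, and the perturbation identity for the Lipschitz estimate --- all of which are sound.
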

\begin{proof}
    As mentioned in the proof of Proposition \ref{prop:EqOfMo_well-posed}, time is merely a parameter and after having shown the higher integrability, we can argue in the same manner.
    The proof for the higher integrability can be found in the respective references stated before.
\end{proof}
When investigating the inverse problem in Section \ref{sec:Inverse_Problem} we are in need for results on higher differentiability to stay in a Hilbertspace setting.
To this end, we state the following result which can be found in \cite{PhCi88, PiGr11}.
\begin{Proposition}[Higher Differentiability]\label{prop:EqOfMo_higher_diff}
  Let $k\in\mathbb{N}$ be any positive integer.
  In addition, we assume that the boundaries $\boundary_0, \boundary_1$ of the spatial domain $\spatialdomain$ are of class $C^{k+1,1}$.
  Provided that
  \begin{align} \nonumber
      \elasticitytensor_{ijkl}\in\sobolevfunctions^{k,\infty}(\spatialdomain),
      &\ \damage\in\damagefunctions\cap\sobolevfunctions^{1,\infty}(\timedomain;\sobolevfunctions^{k,\infty}(\spatialdomain)),\\
      \forces\in\integrablefunctions^{\infty}\left(\timedomain;\sobolevfunctions^{k,p}(\spatialdomain)^{\dimension}\right),
      &\ \stressforces\in\integrablefunctions^{\infty}\left(\timedomain;\sobolevfunctions^{k-1-\frac{1}{p},p}(\spatialdomain)^{\dimension}\right)
  \end{align}
  holds, there exists a unique solution $\displacements\in\integrablefunctions^{\infty}\left(\timedomain;\sobolevfunctions^{k+2,p}(\spatialdomain)^{\dimension}\right)$ that depends continuously on the given data.
\end{Proposition}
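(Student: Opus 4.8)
The plan is to exploit once more that time enters \eqref{eq:EqOfMo_abstract_equation_1} only as a parameter, so that for almost every fixed $\timevariable\in\timedomain$ we are left with the elliptic system of linear elasticity
\begin{equation*}
  -\divergence\big((1-\damage(\timevariable))\elasticitytensor\strains(\displacements(\timevariable))\big) = \forces(\timevariable) + \stressforces(\timevariable)\quad\text{in }\spatialdomain,
\end{equation*}
complemented by the homogeneous Dirichlet condition on $\boundary_0$ and the traction condition on $\boundary_1$. First I would record that the coefficient field $a_{ijkl}(\timevariable):=(1-\damage(\timevariable))\elasticitytensor_{ijkl}$ inherits $\sobolevfunctions^{k,\infty}(\spatialdomain)$ regularity: since $\sobolevfunctions^{k,\infty}(\spatialdomain)$ is a Banach algebra on the bounded Lipschitz domain $\spatialdomain$ and both $\damage(\timevariable)$ and $\elasticitytensor_{ijkl}$ lie in it, so does the product, with norm bounded uniformly in $\timevariable$ because $\damage\in\sobolevfunctions^{1,\infty}(\timedomain;\sobolevfunctions^{k,\infty}(\spatialdomain))\hookrightarrow\integrablefunctions^\infty(\timedomain;\sobolevfunctions^{k,\infty}(\spatialdomain))$. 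The factor $1-\damage\ge 1-\damageconstant_1>0$ guarantees that the strong ellipticity, the symmetry and the Korn--Poincar\'e coercivity of $\elasticitytensor$ all survive multiplication, so the system is uniformly elliptic with a constant independent of $\timevariable$.

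With these three structural facts in hand --- $\sobolevfunctions^{k,\infty}$ coefficients, uniform ellipticity, and a $C^{k+1,1}$ boundary --- I would invoke the classical elliptic regularity theory for strongly elliptic systems (in the form found in \cite{PhCi88, PiGr11}) to obtain, for almost every $\timevariable$, the gain of two derivatives $\displacements(\timevariable)\in\sobolevfunctions^{k+2,p}(\spatialdomain)^{\dimension}$ together with the a priori estimate
\begin{equation*}
  \|\displacements(\timevariable)\|_{\sobolevfunctions^{k+2,p}(\spatialdomain)^{\dimension}} \le \constant\Big(\|\forces(\timevariable)\|_{\sobolevfunctions^{k,p}(\spatialdomain)^{\dimension}} + \|\stressforces(\timevariable)\|_{\sobolevfunctions^{k-1-\frac{1}{p},p}(\boundary_1)^{\dimension}}\Big),
\end{equation*}
in which $\constant$ depends only on the ellipticity constant, the uniformly bounded $\sobolevfunctions^{k,\infty}$ norms of the coefficients and the domain, hence is independent of $\timevariable$. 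Taking the essential supremum over $\timevariable$ and using the assumed $\integrablefunctions^\infty$-in-time data regularity then yields $\displacements\in\integrablefunctions^\infty(\timedomain;\sobolevfunctions^{k+2,p}(\spatialdomain)^{\dimension})$. Measurability of $\timevariable\mapsto\displacements(\timevariable)$ with values in $\sobolevfunctions^{k+2,p}$ I would establish exactly as in \Cref{prop:EqOfMo_well-posed}, approximating the data by simple functions and passing to the limit in the (now higher-order) solution operator.

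For continuous dependence I would subtract the equations satisfied by two admissible data sets and write
\begin{equation*}
  \mathcal{A}(\damage_1)(\displacements_1-\displacements_2) = (\forces_1-\forces_2) + (\stressforces_1-\stressforces_2) - \divergence\big((\damage_1-\damage_2)\elasticitytensor\strains(\displacements_2)\big),
\end{equation*}
treating the last term as an additional right-hand side whose norm is controlled by $\|\damage_1-\damage_2\|_{\sobolevfunctions^{k,\infty}(\spatialdomain)}\|\displacements_2\|_{\sobolevfunctions^{k+2,p}(\spatialdomain)^{\dimension}}$ via the Banach-algebra product estimate and the regularity of $\displacements_2$ already obtained. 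Applying the uniform elliptic estimate to $\displacements_1-\displacements_2$ and taking the essential supremum in time then gives the asserted continuous dependence on $\forces$, $\stressforces$ and $\damage$; one should keep in mind that, because the coefficient difference carries only $k$ derivatives, the top-order product closes with a possible loss of one derivative, so continuity rather than full-norm Lipschitz stability is what one cleanly obtains here.

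The main obstacle I anticipate is justifying the pointwise elliptic regularity in the mixed Dirichlet--Neumann setting: the classical two-derivative gain fails near any interface at which the type of boundary condition switches, owing to the corner and edge singularities generated there. This is precisely where the standing assumption that $\boundary_0$ and $\boundary_1$ are unions of \emph{connected components} of $\partial\spatialdomain$ is indispensable --- it removes such interfaces, so that each component carries a single, clean boundary condition and the $\sobolevfunctions^{k+2,p}$ theory applies without modification. The secondary technical point is the bookkeeping needed to keep the elliptic constant uniform in $\timevariable$ and to confirm that the product coefficients retain both their $\sobolevfunctions^{k,\infty}$ smoothness and their uniform ellipticity across all admissible $\damage$.
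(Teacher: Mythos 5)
Your argument is correct and coincides with the paper's (implicit) approach: the paper offers no proof of this proposition at all, stating it as a known consequence of classical elliptic regularity theory from \cite{PhCi88, PiGr11} applied for each fixed $\timevariable$ with time as a mere parameter, which is exactly the route you take, supplemented by the same measurability-via-simple-functions device already used in \Cref{prop:EqOfMo_well-posed}. Your additional observations --- that the connected-component assumption on $\boundary_0,\boundary_1$ is precisely what rules out the mixed boundary-condition singularities, that the elliptic constant must be tracked uniformly in $\timevariable$, and that the top-order term in the stability estimate closes only with continuous rather than Lipschitz dependence --- are consistent with, and more explicit than, the paper's bare citation.
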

\begin{Corollary}\label{cor:Operators_decoupled}
  Propositions \ref{prop:DaEvEq_well-posed},\ref{prop:EqOfMo_well-posed}, Lemma \ref{lem:nem-op_well-posed}, and Proposition \ref{prop:EqOfMo_higher_int} allow for the introduction of the two Lipschitz operators $O_{E}^{\forces+\stressforces}\colon\damagefunctions\to\integrablefunctions^{\infty}(\timedomain;\spatialfunctions)$ and $O_{D}\colon\domain(O_D)\subset\integrablefunctions^{\infty}(\timedomain;\integrablefunctions^{\infty}(\spatialdomain))\to\damagefunctions$.
  The superscript indicate respective right-hand sides of the specific equations the operators are linked to.
  If omitted, the right-hand sides are those used in \Cref{prop:DaEvEq_well-posed} or \Cref{prop:EqOfMo_well-posed}, respectively.
  In the decoupled setting $O_E$, $O_D\circ\nemytskiioperator\circ\mollifiedgradient$ map a given damage function to respective displacements and vice versa.
\end{Corollary}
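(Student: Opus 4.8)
The plan is to read this corollary as a bookkeeping consequence of the well-posedness results already established, since each of the two operators is simply the solution map of a decoupled subproblem whose unique solvability and Lipschitz dependence are in hand. The only genuine content is checking that the stated composition is well-defined, which reduces to a range condition coming from Lemma \ref{lem:nem-op_well-posed}.

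First I would fix the data $\forces\in\integrablefunctions^\infty(\timedomain;\spatialfunctions^*)$ and $\stressforces\in\integrablefunctions^\infty(\timedomain;\boundaryfunctions^*)$ and define $O_E^{\forces+\stressforces}$. Every $\damage\in\damagefunctions$ satisfies, by \eqref{eq:damage_functions}, the bounds required in \eqref{eq:EqOfMo_conditions_on_data}, so Proposition \ref{prop:EqOfMo_well-posed} produces a unique $\displacements\in\integrablefunctions^\infty(\timedomain;\spatialfunctions)$ solving \eqref{eq:EqOfMo_abstract_equation_1}. Setting $O_E^{\forces+\stressforces}(\damage):=\displacements$ defines the map $\damagefunctions\to\integrablefunctions^\infty(\timedomain;\spatialfunctions)$, and, with $\forces,\stressforces$ held fixed, estimate \eqref{eq:dc_EqOfMo_Lipschitz} collapses to a Lipschitz bound in $\|\damage_1-\damage_2\|$, giving the claimed Lipschitz continuity. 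Likewise, I would let $\domain(O_D)$ consist of those $y\in\integrablefunctions^\infty(\timedomain;\integrablefunctions^\infty(\spatialdomain))$ with $0\le y\le\timedomainmax^{-1}(\damageconstant_1-\damageconstant_0)(1-\damageconstant_1)^\alpha$ almost everywhere, which is exactly the admissibility hypothesis of Proposition \ref{prop:DaEvEq_well-posed}. That proposition assigns to each such $y$ the unique damage function $\damage\in\damagefunctions$ solving \eqref{eq:dc_DaEvEq}--\eqref{eq:dc_DaEvEq_initial_condition}, while \eqref{eq:damage_is_Lipschitz} delivers its Lipschitz continuity; hence $O_D\colon\domain(O_D)\to\damagefunctions$ is a well-defined Lipschitz operator.

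The only point beyond bookkeeping is to check that $O_D\circ\nemytskiioperator\circ\mollifiedgradient$ makes sense, i.e.\ that $\nemytskiioperator(\mollifiedgradient\displacements)$ lands in $\domain(O_D)$. Here Lemma \ref{lem:nem-op_well-posed} is decisive: once $\mollifiedgradient\displacements$ is essentially bounded with $\|\mollifiedgradient\displacements\|\le\bar{y}$, bound \eqref{eq:nem-op_bounds} yields precisely $0\le\nemytskiioperator(\mollifiedgradient\displacements)\le\timedomainmax^{-1}(\damageconstant_1-\damageconstant_0)(1-\damageconstant_1)^\alpha$, the defining inequality of $\domain(O_D)$. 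Essential boundedness of the mollified gradient is exactly what Proposition \ref{prop:EqOfMo_higher_int} is cited for: the higher integrability $\displacements\in\integrablefunctions^\infty(\timedomain;\sobolevfunctions^{1,p}(\spatialdomain)^{\dimension})$ with $p>2$, together with the smoothing- or difference-quotient structure of $\mollifiedgradient$ recorded in Remark \ref{rem:on_mollifiers}, controls $\mollifiedgradient\displacements$ uniformly in $\integrablefunctions^\infty$.

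The main obstacle I anticipate --- and the reason the higher-integrability result is invoked at all --- is to confirm that this uniform $\integrablefunctions^\infty$-bound can be arranged to respect the radius constraint $\|\mollifiedgradient\displacements\|\le\bar{y}$, so that the composed image stays inside the admissible ball $\overline{Y}$ where the Nemytskii operator is defined. Once this is secured, the Lipschitz property of $O_D\circ\nemytskiioperator\circ\mollifiedgradient$ follows by chaining the (bounded, hence Lipschitz) mollified-gradient operation, the Lipschitz continuity of the Nemytskii operator from Lemma \ref{lem:nem-op_well-posed}, and the Lipschitz continuity of $O_D$ from \eqref{eq:damage_is_Lipschitz}; the final sentence of the corollary is then just the observation that $O_E^{\forces+\stressforces}$ and $O_D\circ\nemytskiioperator\circ\mollifiedgradient$ realize the two directions of the decoupled map between damage and displacements.
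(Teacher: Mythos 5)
Your proposal is correct and follows essentially the same route as the paper: the paper's own proof also reduces everything to checking that $\mollifiedgradient\displacements(\timevariable)$ lands in $\overline{Y}$, which it derives from \eqref{eq:EqOfMo_bound_for_displacements} and Proposition \ref{prop:EqOfMo_higher_int} before "choosing $\bar y$ accordingly" --- exactly the radius-constraint point you flag as the main obstacle. Your write-up is in fact more explicit than the paper's three-line proof about why the remaining pieces (Lipschitz continuity of each factor, the range condition \eqref{eq:nem-op_bounds}) assemble correctly.
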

\begin{proof}
  To clarify that $O_D\circ\nemytskiioperator\circ\mollifiedgradient$ is actually well-defined, we show that $\mollifiedgradient\displacements(\timevariable)$ is uniformly bounded in $\spatialdomain$, i.e., $\mollifiedgradient\displacements(\timevariable)\in\overline{Y}=\overline{\mathbb{B}}(0;\bar y)\subset\mathbb{R}^{\dimension^2}$ for almost every $\timevariable\in\timedomain$.
  This immediately follows from \eqref{eq:EqOfMo_bound_for_displacements} and \Cref{prop:EqOfMo_higher_int}.
  Choosing $\bar y$ accordingly then shows our claim.
\end{proof}
\begin{Remark}
  Recall, the domain of $O_D$ denoted by $\domain(O_D)$ are all functions $y\in\integrablefunctions^{\infty}(\timedomain;\integrablefunctions^{\infty}(\spatialdomain))$ satisfying the bounds given in \Cref{prop:DaEvEq_well-posed}.
\end{Remark}
\subsubsection{Coupled Problem}\label{subsec:coupled_problem}
To prove that the parameter-to-state map is well-defined in the suggested setting, we have to make sure, that an estimate in the shape of
\begin{equation}\label{eq:bound_L_infty_by_L_2}
\|\nemytskiioperator(\nabla\displacements_1)(\timevariable)-\nemytskiioperator(\nabla\displacements_2)(\timevariable)\|_{\integrablefunctions^\infty(\spatialdomain)} %
\le c\|\displacements_1(\timevariable)%
-\displacements_2(\timevariable)\|_{\spatialfunctions} %
\text{ a.e. in }\timedomain
\end{equation}
holds.
This means, higher regularity of the displacements $\displacements$ is needed to employ embedding theorems.
Although recent results in \cite{Ha-DiJoKnRe16, HeMe11} state higher integrability of the displacements, particularly $p>2$, this is, unfortunately, not enough to ensure an estimate like \eqref{eq:bound_L_infty_by_L_2}.
Another approach would be to assume smoother data and therefore to achieve higher regularity locally by standard $\integrablefunctions^2$-theory, see \cite{LaEv10, GiTr83, LouNi55} for instance.
But this does not remedy the situation as the right-hand side would be measured in a stronger norm which cannot be bounded by a damage function in its respective norm.\\[2ex]
Another option could be to change into an $\integrablefunctions^{p}$ setting.
Especially for the damage evolution in space this could potentially remedy the situation.
However, it is not so clear how this would affect the momentum balance equation in \eqref{eq:EqOfMo} and, additionally, the complexity and difficulty for the inverse problem being now in a general Banach setting would complicate things even further.
That is why, for now, we want to remain in a Hilbert space setting and focus on showing the validity of our approach.
That is why we decided to use a mollified gradient in our model.\\[2ex]
Please note that by introducing additional regularization into the forward problem we affect the singular value decomposition of the linearized operator.
Additional smoothness increases oscillation effects in the reconstruction process (see e.g. \cite{EnHaNeu96, AnRi03}).
That is why we want to regularize as little as possible. \\[2ex]
In order to state the coupled problem in a meaningful way (see Corollary \ref{cor:Operators_decoupled}) and, especially, use them to construct a fixed-point operator on $\damagefunctions$ to prove unique solvability of the fully coupled problem, we need the following lemma.
\begin{Lemma}\label{lem:g_is_Lipschitz}
  Let $\displacements_1(\timevariable), \displacements_2(\timevariable)\in\spatialfunctions\cap\sobolevfunctions^{1,p}(\spatialdomain)$ a.e. in $\timedomain$.
  We denote the mollified gradient by $\mollifiedgradient$.
  Provided that $\damageprocess\in\damageprocesses$ is an admissible damage process, there exists a constant $c(\dimension)>0$ particularly dependent on the spatial dimension $\dimension$ such that
  \begin{equation}\label{eq:Lipschitz_estimate_for_nem-op}
    \|\nemytskiioperator(\mollifiedgradient\displacements_1)(\timevariable)-\nemytskiioperator(\mollifiedgradient\displacements_2)(\timevariable)\|_{\integrablefunctions^\infty(\spatialdomain)} %
    \le c\|\displacements_1(\timevariable)%
    -\displacements_2(\timevariable)\|_{\sobolevfunctions^{1,p}(\spatialdomain)} %
  \end{equation}
  almost everywhere in $\timedomain$.
  \end{Lemma}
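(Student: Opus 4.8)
The plan is to factor the desired estimate \eqref{eq:Lipschitz_estimate_for_nem-op} through two independent bounds and compose them: the Lipschitz dependence of the superposition operator $\nemytskiioperator$ on its argument, and the smoothing property of the mollified gradient $\mollifiedgradient$, which turns $\sobolevfunctions^{1,p}$-regularity of a displacement into an $\integrablefunctions^\infty$-bound on its mollified gradient. Since $\nemytskiioperator$ acts pointwise through $\damageprocess$ and, by definition of $\damageprocesses$, $\damageprocess(\timevariable,\spatialvariables,\cdot)\in\continuousfunctions^{1,1}(\overline{Y})\subset\continuousfunctions^{0,1}(\overline{Y})$ is Lipschitz on $\overline{Y}$, I would first reuse the pointwise estimate from the proof of Lemma \ref{lem:nem-op_well-posed} to write, for a.e. $(\timevariable,\spatialvariables)$ and all $\boldsymbol a,\boldsymbol b\in\overline{Y}$,
\begin{equation*}
  |\damageprocess(\timevariable,\spatialvariables,\boldsymbol a)-\damageprocess(\timevariable,\spatialvariables,\boldsymbol b)|\le\subnorm{\continuousfunctions^{0,1}(\overline{Y})}{\damageprocess(\timevariable,\spatialvariables,\cdot)}\,|\boldsymbol a-\boldsymbol b|.
\end{equation*}
By Corollary \ref{cor:Operators_decoupled} the values $\mollifiedgradient\displacements_1(\timevariable,\spatialvariables)$ and $\mollifiedgradient\displacements_2(\timevariable,\spatialvariables)$ lie in $\overline{Y}$, so inserting them and passing to the essential supremum over $\spatialvariables$ yields
\begin{equation*}
  \subnorm{\integrablefunctions^\infty(\spatialdomain)}{\nemytskiioperator(\mollifiedgradient\displacements_1)(\timevariable)-\nemytskiioperator(\mollifiedgradient\displacements_2)(\timevariable)}\le\subnorm{\integrablefunctions^\infty(\timedomain;\integrablefunctions^\infty(\spatialdomain;\continuousfunctions^{0,1}(\overline{Y})))}{\damageprocess}\,\subnorm{\integrablefunctions^\infty(\spatialdomain)^{\dimension^2}}{\mollifiedgradient\displacements_1(\timevariable)-\mollifiedgradient\displacements_2(\timevariable)}
\end{equation*}
for a.e. $\timevariable\in\timedomain$.

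It then remains to control the mollified gradient. Setting $\homogenizeddisplacements:=\displacements_1-\displacements_2$ and using linearity of $\mollifiedgradient$, I would establish the smoothing bound
\begin{equation*}
  \subnorm{\integrablefunctions^\infty(\spatialdomain)^{\dimension^2}}{\mollifiedgradient\homogenizeddisplacements(\timevariable)}\le\constant(\dimension)\,\subnorm{\sobolevfunctions^{1,p}(\spatialdomain)}{\homogenizeddisplacements(\timevariable)}\quad\text{for a.e. }\timevariable\in\timedomain.
\end{equation*}
For the convolution realization of Remark \ref{rem:on_mollifiers} item \eqref{rem:on_mollifiers:smooth_functions}, each entry is $(\mollifiedgradient\homogenizeddisplacements)_{ij}=\mollifier\convolution\partial_j\homogenizeddisplacement_i$, and Young's inequality gives $\subnorm{\integrablefunctions^\infty(\spatialdomain)}{\mollifier\convolution\partial_j\homogenizeddisplacement_i}\le\subnorm{\integrablefunctions^{p'}(\mathbb{R}^{\dimension})}{\mollifier}\,\subnorm{\integrablefunctions^p(\spatialdomain)}{\partial_j\homogenizeddisplacement_i}$ with $p'$ conjugate to $p$; summing over the $\dimension^2$ entries delivers the bound with a constant $\constant(\dimension)$ depending on $\dimension$, on $\mu$ through $\subnorm{\integrablefunctions^{p'}(\mathbb{R}^{\dimension})}{\mollifier}$, and on $p$. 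For the projection realization of item \eqref{rem:on_mollifiers:projection} the same bound follows from the equivalence of all norms on the finite-dimensional range $\sobolevfunctions_h^{k,2}(\spatialdomain)$ together with the continuous embedding $\integrablefunctions^p(\spatialdomain)\hookrightarrow\integrablefunctions^2(\spatialdomain)$ on the bounded domain $\spatialdomain$. Composing the two displays yields \eqref{eq:Lipschitz_estimate_for_nem-op} a.e. in $\timedomain$, with $\constant=\constant(\dimension)\,\subnorm{\integrablefunctions^\infty(\timedomain;\integrablefunctions^\infty(\spatialdomain;\continuousfunctions^{0,1}(\overline{Y})))}{\damageprocess}$.

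The crux — and the very reason the mollified gradient was introduced in \eqref{def:mollified_gradient} — is this second estimate. The unmollified gradient maps $\sobolevfunctions^{1,p}(\spatialdomain)$ only into $\integrablefunctions^p(\spatialdomain)$, and for the admissible range $p\in[2,\bar{p}]$ (where $\bar{p}$ may well fall below $\dimension$) the Sobolev embedding does not reach $\integrablefunctions^\infty(\spatialdomain)$; this is precisely the obstruction behind \eqref{eq:bound_L_infty_by_L_2}. I would therefore insist on a realization of $\mollifiedgradient$ whose smoothing is quantitative, i.e.\ with kernel in $\integrablefunctions^{p'}$ (the convolution example) or with finite-dimensional range (the projection example), since it is the gain of boundedness, rather than mere regularity, that renders $\nemytskiioperator\circ\mollifiedgradient$ Lipschitz from $\sobolevfunctions^{1,p}(\spatialdomain)$ into $\integrablefunctions^\infty(\spatialdomain)$. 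Once this gain is secured, the remaining pieces — the pointwise Lipschitz estimate for $\damageprocess$ and the summation over the $\dimension^2$ entries — are routine.
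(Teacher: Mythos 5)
Your proof is correct, but it takes a genuinely different route through the key estimate than the paper does. Both arguments share the first step: the pointwise Lipschitz bound coming from $\damageprocess(\timevariable,\spatialvariables,\cdot)\in\continuousfunctions^{0,1}(\overline{Y})$, which reduces everything to showing $\|\mollifiedgradient(\displacements_1-\displacements_2)(\timevariable)\|_{\integrablefunctions^\infty(\spatialdomain)^{\dimension^2}}\le c\|\displacements_1(\timevariable)-\displacements_2(\timevariable)\|_{\sobolevfunctions^{1,p}(\spatialdomain)}$. The paper then explicitly restricts to the difference-quotient realization \eqref{def:mollified_gradient} and argues dimension by dimension: for $\dimension=1,2$ it invokes the embedding $\sobolevfunctions^{1,p}(\spatialdomain)\hookrightarrow\continuousfunctions(\overline{\spatialdomain})$ for $p>\dimension$ (with Proposition \ref{prop:EqOfMo_higher_int} supplying $p>2$ in 2D), so that the difference quotient of a bounded function is bounded by $2\mu^{-1}$ times its sup norm; for $\dimension=3$, where $p>3$ is not available, it pre-averages with $\indicatorfunction_{\ball(0;\mu)}$ and uses H\"older, which is essentially your mechanism. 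You instead prove the smoothing bound for the convolution and projection realizations of Remark \ref{rem:on_mollifiers} via Young's inequality $\|\mollifier\convolution\partial_j\homogenizeddisplacement_i\|_{\integrablefunctions^\infty}\le\|\mollifier\|_{\integrablefunctions^{p'}}\|\partial_j\homogenizeddisplacement_i\|_{\integrablefunctions^p}$, respectively norm equivalence on the finite-dimensional range. This buys you a dimension-uniform argument that needs neither a Sobolev embedding into continuous functions nor the higher-integrability result ($p=2$ already suffices), and it correctly isolates the gain of boundedness as the essential point. What your argument does not cover is the bare difference-quotient realization \eqref{def:mollified_gradient} without pre-smoothing, which is the paper's default definition and the case its proof handles for $\dimension=1,2$; since the lemma is stated for a generic $\mollifiedgradient$ and Remark \ref{rem:on_mollifiers} sanctions your realizations, this is a difference in scope rather than a gap, but you should state which realization you are assuming, just as the paper does. (Both proofs also silently require the mollified gradients to take values in $\overline{Y}$ so the Lipschitz estimate for $\damageprocess$ applies; you at least cite Corollary \ref{cor:Operators_decoupled} for this, and a one-line remark that $\partial_j\homogenizeddisplacement_i$ is extended by zero before convolving would make the Young step airtight.)
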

\begin{proof}
  We limit ourselves to show this for a mollified gradient $\mollifiedgradient$ in the shape of \eqref{def:mollified_gradient}.
  For spatial dimension $\dimension=1$ the claim follows immediately from Lipschitz-continuity of damage processes and the compact embedding $\qisobolevfunctions^s(\spatialdomain) \hookrightarrow\hookrightarrow\continuousfunctions(\bar\spatialdomain)$ since $k>N/p$ for $p>1$ (see e.g.~\cite[Theorem III.5.8.3]{KuJo77}). \\[2ex]
  For $\dimension=2$ we also use the Lipschitz-continuity of damage processes as before.
  By employing Proposition \ref{prop:EqOfMo_higher_int} we are able to establish \eqref{eq:Lipschitz_estimate_for_nem-op} for some $p>2$.\\[2ex]
  In the three-dimensional case $\dimension=3$, we need more regularization of the displacement's gradient.
  One option would be to taking the average of the displacements in a small ball around $\spatialvariables$ before applying the difference quotient, i.e.,
  \begin{equation}
      \mollifiedgradient\displacements:=D_i^{\mu}\left(\chi_{\mathbb{B}(0;\mu)}\convolution\displacements\right).
  \end{equation}
  We also refer to Remark \ref{rem:on_mollifiers}.
  Then the argument is the same as in the case $\dimension=2$.
\end{proof}
\begin{Remark}\label{rem:mollified_gradient}
    Another possibility to show an estimate like \eqref{eq:Lipschitz_estimate_for_nem-op} would be to use the higher regularity results from Proposition \ref{prop:EqOfMo_higher_diff} since we can choose $p>3$ and can make sure an embedding into continuous functions is possible.
    The drawback is that we have to make stronger assumptions on the elasticitytensor $\elasticitytensor$, the damage functions, and the damage processes as well.
    The left-hand side of estimate \eqref{eq:Lipschitz_estimate_for_nem-op} would then be measured in the stronger norm of $\sobolevfunctions^{1,\infty}(\spatialdomain)$.
\end{Remark}
We can now collect the previous results to make a sound argument for the well-posedness of the fully coupled problem.
\begin{proof}[Proof of Theorem \ref{theo:direct_problem_well-posed}]
  The main idea of this proof is laid out in \Cref{fig:sketch_of_proof}. We start with an arbitrary damage function $\damage_1\in\damagefunctions$. According to \Cref{prop:EqOfMo_well-posed}, we can employ this damage function into the momentum balance equation and solve for a unique displacement field $O_E(\damage_1)=:\displacements_1\in\integrablefunctions^\infty(\timedomain;\spatialfunctions)$. We apply the mollified gradient to $\displacements_1$ and insert $\mollifiedgradient\displacements_1$ into the damage evolution equation. \Cref{prop:DaEvEq_well-posed} established that there is exactly one damage function $\damage_2:=O_D\circ\nemytskiioperator\circ\mollifiedgradient(\displacements_1)$ in $\damagefunctions$ solving equations \eqref{eq:dc_DaEvEq}, \eqref{eq:dc_DaEvEq_initial_condition}. This strategy introduces an operator $\fixedpointoperator\colon\damagefunctions\to\damagefunctions$ via $\fixedpointoperator:=O_D\circ\nemytskiioperator\circ\mollifiedgradient\circ O_E$ for which we claim the existence of a unique fixed-point $\damage$. To see this, we note that $\damagefunctions$ is a complete metric space and show that $\fixedpointoperator^2:=\fixedpointoperator\circ\fixedpointoperator$ is k-contractive.
  Employing Banach's contraction principle then shows our claim.
  \begin{center}
    \includegraphics[width=.75\linewidth]{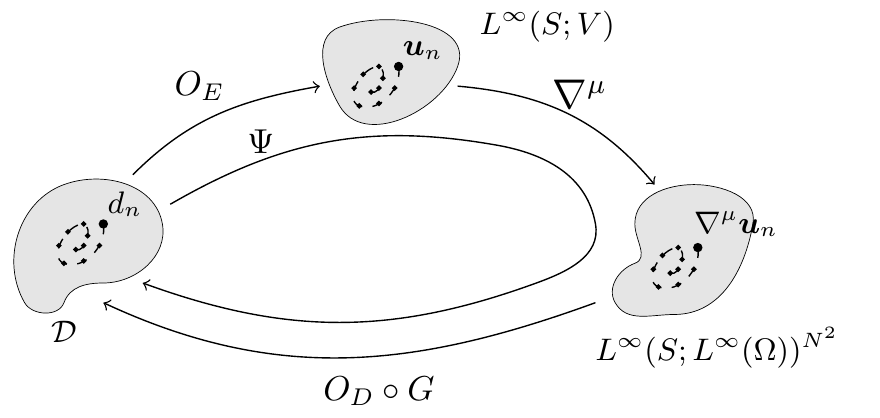}
    \captionof{figure}{Concept to prove Theorem \ref{theo:direct_problem_well-posed}. \small{The operators $O_E$, $O_D$ introduced in Corollary \ref{cor:Operators_decoupled} give raise to constructing a k-contractive mapping $\fixedpointoperator\colon\damagefunctions\to\damagefunctions$ that complies with \emph{Banach's contraction principle}.}}
    \label{fig:sketch_of_proof}
  \end{center}
  Take $\damage_1, \damage_2\in\damagefunctions$. Definition of $\fixedpointoperator$ immediately entails
  \begin{equation}\label{eq:fi-po_op_1}
    {\fixedpointoperator^{2}}'(\damage_i) = (1-\fixedpointoperator^2(\damage_i))^{-\alpha}\nemytskiioperator(\mollifiedgradient O_E(\fixedpointoperator(\damage_i)))\text{ in }\integrablefunctions^{\infty}(\timedomain;\integrablefunctions^{\infty}(\spatialdomain))\text{ for }i=1,2,
  \end{equation}
  and leads us to
  \begin{multline}\label{eq:fi-po_op_estimate_1}
    \|\fixedpointoperator^2(\damage_1)-\fixedpointoperator^2(\damage_2)\|_{\sobolevfunctions^{1,\infty}(\timedomain;\integrablefunctions^\infty(\spatialdomain))}
      \le c\big(\|\fixedpointoperator^2(\damage_1)-\fixedpointoperator^2(\damage_2)\|_{\integrablefunctions^{\infty}(\timedomain;\integrablefunctions^{\infty}(\spatialdomain))} \\
      + \|\nemytskiioperator(\mollifiedgradient(O_E(\damage_1)))-\nemytskiioperator(\mollifiedgradient(O_E(\damage_2)))\|_{\integrablefunctions^{\infty}(\timedomain;\integrablefunctions^{\infty}(\spatialdomain))}\big).
  \end{multline}
  We are left to estimate the right-hand side by $\tilde{c}\|\damage_1-\damage_2\|_{\damagefunctions}$ for some positive $\tilde{c}<1$.
  To this end, we focus on the first term. Integrating \eqref{eq:fi-po_op_1} over $(0,\timevariable)$ shows
    \begin{equation}\label{eq:fi-po_op_estimate_2}
      \|\fixedpointoperator_1^2(\timevariable) - \fixedpointoperator_2^2(\timevariable)\|_{\integrablefunctions^\infty(\spatialdomain)}
        \le c\int_0^t\|\nemytskiioperator(\mollifiedgradient(O_E(\fixedpointoperator(\damage_1))))(\xi)-\nemytskiioperator(\mollifiedgradient(O_E(\fixedpointoperator(\damage_2))))(\xi)\|_{\integrablefunctions^\infty(\spatialdomain)}\,d\xi
    \end{equation}
    for $\timevariable\in\timedomain$. Lemma \ref{lem:g_is_Lipschitz}, Lipschitz continuity of $\nemytskiioperator$ and $O_E$, and using \eqref{eq:fi-po_op_estimate_2} add up to
    \begin{multline}\label{eq:fi-po_op_estimate_3}
      \|\nemytskiioperator(\mollifiedgradient(O_E(\fixedpointoperator(\damage_1))))(\timevariable)-|\nemytskiioperator(\mollifiedgradient(O_E(\fixedpointoperator(\damage_2))))(\timevariable)\|_{\integrablefunctions^\infty(\spatialdomain)} \\
       \le c\|\fixedpointoperator(\damage_1)(\timevariable)-\fixedpointoperator(\damage_2)(\timevariable)\|_{\integrablefunctions^{\infty}(\spatialdomain)}
       \le c\int_0^t\|\damage_1(\xi)-\damage_2(\xi)\|_{\integrablefunctions^\infty(\spatialdomain)}\,d\xi.
    \end{multline}
    Using the $\|\cdot\|_{\bilieckiconstant}$-norm from \eqref{eq:Biliecki_norm} again allows for estimates
      \begin{equation}\label{eq:fi-po_op_estimate_4}
        \int_0^\timevariable\int_0^\xi\|\damage_1(\zeta)-\damage_2(\zeta)\|_{\integrablefunctions^\infty(\spatialdomain)}\,d\zeta\,d\xi %
        \le \bilieckiconstant^{-2}\exp(\bilieckiconstant\timevariable)\|\damage_1-\damage_2\|_\bilieckiconstant
    \end{equation}
    as well as
      \begin{equation}\label{eq:fi-po_op_estimate_5}
        \int_0^\timevariable\|\damage_1(\xi)-\damage_2(\xi)\|_{\integrablefunctions^\infty(\spatialdomain)} %
        \le\bilieckiconstant^{-1}\exp(\bilieckiconstant\timevariable)\|\damage_1-\damage_2\|_\bilieckiconstant
    \end{equation}
  in $\timedomain$. Collecting inequalities \eqref{eq:fi-po_op_estimate_1} to \eqref{eq:fi-po_op_estimate_5} and the fact that $\timevariable$ was chosen arbitrarily, we arrive at
    \begin{equation}\label{eq:fi-po_op_is_k-contractive}
      \|\fixedpointoperator^2(\damage_1)-\fixedpointoperator^2(\damage_2)\|_\bilieckiconstant %
      \le c(\bilieckiconstant^{-1} + \bilieckiconstant^{-2})\|\damage_1-\damage_2\|_\bilieckiconstant.
    \end{equation}
    Choosing $\bilieckiconstant$ sufficiently large completes the proof. Lipschitz continuous dependence on the data follows from the respective results for the subproblems in \Cref{prop:DaEvEq_well-posed} and \Cref{prop:EqOfMo_well-posed}.
    Without loss of generality we only vary $\damageprocess_i\in\damageprocesses$ for $i=1,2$ in the direct problem \eqref{eq:direct_problem} and denote the resulting displacements and damage variables with $\displacements_i$, $\damage_i$, respectively.
    Subtracting the respective equations we get
    \begin{equation}\label{eq:continuous_dependence_for_DaEvEq}
      \|\damage_1(\timevariable) - \damage_2(\timevariable)\|_{\integrablefunctions^\infty(\spatialdomain)} \le C\Big( \|\damageprocess_1-\damageprocess_2\|_{\integrablefunctions^\infty(\timedomain;\integrablefunctions^\infty(\spatialdomain;\continuousfunctions^{0,1}(\overline{Y})))} + \int_0^{\timevariable}\|\displacements_1-\displacements_2\|_{\spatialfunctions} \,ds\Big)
    \end{equation}
    by common manipulations and employment of Lemma \ref{lem:g_is_Lipschitz} (cf.~to \eqref{eq:fi-po_op_estimate_2}).
    In connection with a respective energy estimate from the equation of motion this results in
    \begin{equation}\label{eq:continuous_dependence_for_EqOfMo}
      \|\displacements_1(\timevariable)-\displacements_2(\timevariable)\|_{\spatialfunctions}
        \le C\|\damage_1(\timevariable)-\damage_2(\timevariable)\|_{\integrablefunctions^\infty(\spatialdomain)}
        \le C\Big(\|\damageprocess_1-\damageprocess_2\|_{\integrablefunctions^\infty(\timedomain;\integrablefunctions^\infty(\spatialdomain;\continuousfunctions^{0,1}(\overline{Y})))} + \int_0^{\timevariable}\|\displacements_1-\displacements_2\|_{\spatialfunctions} \,ds\Big)
    \end{equation}
    and another application of Gronwall's lemma then establishes
    \begin{equation}\label{eq:estimate_displacments_via_damageprocesses}
      \|\displacements_1(\timevariable)-\displacements_2(\timevariable)\|_{\integrablefunctions^\infty(\timedomain;\spatialfunctions)} %
      \le C\|\damageprocess_1-\damageprocess_2\|_{\integrablefunctions^\infty(\timedomain;\integrablefunctions^\infty(\spatialdomain;\continuousfunctions^{0,1}(\overline{Y})))} %
      \le C\|\damageprocess_1-\damageprocess_2 \|_{\damageprocesses}
    \end{equation}
  which completes our proof.
  \end{proof}
  \begin{Corollary}[Higher regularity for the coupled problem]\label{cor:fp_higher_reg}
    Assuming that the conditions of \Cref{prop:EqOfMo_higher_diff}, \cref{prop:DaEvEq_higher_regularity}, and
    \begin{equation}
        \damageprocess\in\damageprocesses\cap\sobolevfunctions^{k,\infty}(\timedomain;\sobolevfunctions^{l,p}(\spatialdomain;\continuousfunctions^{l+1,1}(\overline{Y})))
    \end{equation}
    are met.
    Also note \Cref{rem:mollified_gradient}.
    Then the solution exhibits higher regularity, i.e.,
    \begin{equation}
        \damage\in\damagefunctions\cap\sobolevfunctions^{k+1,\infty}(\timedomain;\sobolevfunctions^{l,\infty}(\spatialdomain)), \quad
        \displacements\in\integrablefunctions^{\infty}(\timedomain;\spatialfunctions)\cap\sobolevfunctions^{k+1,\infty}\left(\timedomain;\sobolevfunctions^{l+2,p}(\spatialdomain)^{\dimension}\right),
    \end{equation}
    and the solution depends Lipschitz-continuously on the data.
  \end{Corollary}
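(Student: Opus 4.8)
The plan is to upgrade the fixed-point construction of \Cref{theo:direct_problem_well-posed} to the stronger-regularity scale, using the decoupled higher-regularity results as building blocks. Recall that the unique solution is the fixed point of $\fixedpointoperator=O_D\circ\nemytskiioperator\circ\mollifiedgradient\circ O_E$ on $\damagefunctions$. Setting
\[
  \mathcal R_D:=\damagefunctions\cap\sobolevfunctions^{k+1,\infty}(\timedomain;\sobolevfunctions^{l,\infty}(\spatialdomain)),
\]
a complete metric space as a closed subset of $\sobolevfunctions^{k+1,\infty}(\timedomain;\sobolevfunctions^{l,\infty}(\spatialdomain))$, I would first show that $\fixedpointoperator$ maps $\mathcal R_D$ into itself and then, mimicking the weighted $\|\cdot\|_{\bilieckiconstant}$-norm argument of \Cref{theo:direct_problem_well-posed} (see \eqref{eq:Biliecki_norm}) but with the stronger-norm Lipschitz estimates recorded below, that $\fixedpointoperator^{2}$ is $k$-contractive on $\mathcal R_D$. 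Banach's contraction principle then produces a fixed point in $\mathcal R_D$; since $\mathcal R_D\subset\damagefunctions$ and the base-space fixed point is unique by \Cref{theo:direct_problem_well-posed}, it must coincide with $\damage$, giving $\damage\in\mathcal R_D$. The regularity of $\displacements=O_E(\damage)$ is then exactly the output of the $O_E$-step below.

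The self-mapping property is obtained by tracing the regularity once around the loop. Feeding $\damage\in\mathcal R_D$ into the momentum balance, \Cref{prop:EqOfMo_higher_diff} supplies the spatial regularity $\sobolevfunctions^{l+2,p}(\spatialdomain)$ of $O_E(\damage)$, while the time-as-parameter argument underlying \Cref{prop:EqOfMo_well-posed} --- differentiate $\mathcal{A}(\damage)\displacements=\forces+\stressforces$ weakly in time, so that each $\partial_t^{j}\displacements$ solves an elliptic problem whose right-hand side is built from the time derivatives of $\damage$, $\forces$, $\stressforces$ --- promotes this, now in the $\integrablefunctions^\infty$-in-time scale, to $\displacements\in\sobolevfunctions^{k+1,\infty}(\timedomain;\sobolevfunctions^{l+2,p}(\spatialdomain)^{\dimension})$. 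The mollified gradient keeps $\mollifiedgradient\displacements$ essentially bounded and confined to $\overline Y$ (as in \Cref{cor:Operators_decoupled}) and, via the Sobolev embedding of $\sobolevfunctions^{l+2,p}$ for $p$ large and $\dimension\le 3$ together with \Cref{rem:mollified_gradient}, of the spatial regularity needed below. Composing with $\damageprocess\in\sobolevfunctions^{k,\infty}(\timedomain;\sobolevfunctions^{l,p}(\spatialdomain;\continuousfunctions^{l+1,1}(\overline Y)))$ through the chain rule for Sobolev functions then yields $y:=\nemytskiioperator(\mollifiedgradient\displacements)\in\sobolevfunctions^{k,\infty}(\timedomain;\sobolevfunctions^{l,\infty}(\spatialdomain))$, the $\continuousfunctions^{l+1,1}$ smoothness of the outer symbol matching the derivatives carried by $\mollifiedgradient\displacements$. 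Finally \Cref{prop:DaEvEq_higher_regularity}, applied with this $y$, returns $O_D(y)=\fixedpointoperator(\damage)\in\mathcal R_D$, closing the loop. As in the proof of \Cref{prop:DaEvEq_higher_regularity} I would organise this as an induction on $(k,l)$, the base case $k=l=0$ being \Cref{theo:direct_problem_well-posed}, and reduce the step to $k=l=1$ without loss of generality.

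For the Lipschitz dependence I would subtract the two loops for data $\damageprocess_1,\damageprocess_2$ and chain the stronger-norm estimates: inequality \eqref{eq:damage_is_Lipschitz_in_stronger_norm} for $O_D$, the Lipschitz estimate of \Cref{prop:EqOfMo_higher_diff}/\Cref{prop:EqOfMo_well-posed} for $O_E$, and the analogue of \Cref{lem:g_is_Lipschitz} in the stronger norm of \Cref{rem:mollified_gradient} for $\nemytskiioperator\circ\mollifiedgradient$, closing with Gronwall's lemma exactly as in the passage \eqref{eq:continuous_dependence_for_DaEvEq}--\eqref{eq:estimate_displacments_via_damageprocesses}.

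I expect the main obstacle to be the Nemytskii step, namely a chain rule together with a Lipschitz estimate in the $\sobolevfunctions^{l,\infty}$-norm for $(\timevariable,\spatialvariables)\mapsto\damageprocess(\timevariable,\spatialvariables,\mollifiedgradient\displacements(\timevariable,\spatialvariables))$ with a merely $\continuousfunctions^{l+1,1}$ outer symbol that itself depends on $(\timevariable,\spatialvariables)$ only in a Sobolev sense. One must check that the mollifier delivers precisely the spatial smoothness and $\integrablefunctions^\infty$-boundedness (via the embeddings of \Cref{rem:mollified_gradient}) needed so that the composition lands back in the space required by \Cref{prop:DaEvEq_higher_regularity}, with no loss of order that would break the induction; reconciling the $\sobolevfunctions^{l,p}$-in-space regularity assumed of $\damageprocess$ with the $\sobolevfunctions^{l,\infty}$-in-space target is the delicate point.
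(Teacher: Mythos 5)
Your proposal is correct and follows essentially the same route as the paper, whose own proof consists only of the remark that one reruns the fixed-point argument of \Cref{theo:direct_problem_well-posed} with the stronger norms, using the mollifier and embedding estimates already established. Your write-up is considerably more explicit than the paper's --- in particular in isolating the Nemytskii/chain-rule step and the $\sobolevfunctions^{l,p}$ versus $\sobolevfunctions^{l,\infty}$ reconciliation as the delicate point, which the paper glosses over entirely --- but it is the same decomposition and the same contraction-plus-uniqueness argument.
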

  \begin{proof}
    This can be proven in the exact same manner as in the proof of \Cref{theo:direct_problem_well-posed}.
    The only thing that has to be updated is the use of stronger norms.
    But we have already seen how we can use mollifiers and embeddings theorems to get the needed estimates.
  \end{proof}
  \begin{Corollary}\label{cor:solution_operator_nonlinear_problem}
    Theorem \ref{theo:direct_problem_well-posed} gives the means to introduce an operator mapping the data Lipschitz continuously to the respective solution, i.e.,
    \begin{equation}\label{eq:solution_opertator_nonlinear_problem}
        \begin{array}{rcl}
         \ptsmap\colon\integrablefunctions^\infty(\timedomain;\spatialfunctions^*)\times \integrablefunctions^\infty(\timedomain;\boundaryfunctions^*)\times\damagefunctions_0\times\damageprocesses & \to & \integrablefunctions^\infty(\timedomain;\spatialfunctions)\times\damagefunctions\\
          (\forces,\stressforces,\damage_0,\damageprocess) & \mapsto & (\displacements,\damage).
        \end{array}
    \end{equation}
  \end{Corollary}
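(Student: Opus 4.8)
The statement is a direct packaging of Theorem~\ref{theo:direct_problem_well-posed}, so the plan is simply to verify that the three defining features of a solution operator---single-valuedness, correct codomain, and Lipschitz continuity---are all already contained in what has been proven, and to collect them into one clean statement.

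First I would address well-definedness. For any admissible tuple $(\forces,\stressforces,\damage_0,\damageprocess)$ satisfying \eqref{eq:data_pts_map}, Theorem~\ref{theo:direct_problem_well-posed} produces a solution to the traction-driven problem and, crucially, asserts its \emph{uniqueness}. The fixed-point construction underlying that proof yields the damage component as the unique fixed point $\damage\in\damagefunctions$ of $\fixedpointoperator$, and then recovers the displacement as $\displacements=O_E(\damage)$. By Corollary~\ref{cor:Operators_decoupled} we have $O_E\colon\damagefunctions\to\integrablefunctions^\infty(\timedomain;\spatialfunctions)$, so that $(\displacements,\damage)\in\integrablefunctions^\infty(\timedomain;\spatialfunctions)\times\damagefunctions$. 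Uniqueness makes the assignment $(\forces,\stressforces,\damage_0,\damageprocess)\mapsto(\displacements,\damage)$ single-valued, hence a genuine map $\ptsmap$ with exactly the domain and codomain stated in \eqref{eq:solution_opertator_nonlinear_problem}.

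It remains to record Lipschitz continuity jointly in all four arguments. In the proof of Theorem~\ref{theo:direct_problem_well-posed} the estimate \eqref{eq:estimate_displacments_via_damageprocesses} was derived by varying only $\damageprocess$ (without loss of generality); the plan here is to let all four data entries vary simultaneously. For two data tuples I would subtract the respective equations \eqref{eq:EqOfMo}--\eqref{eq:DaEvEq_initial_condition} and combine the Lipschitz bounds already available: the damage estimate \eqref{eq:damage_is_Lipschitz} of Proposition~\ref{prop:DaEvEq_well-posed} controls $\damage_1-\damage_2$ by $\|\damage_{10}-\damage_{20}\|$ together with the difference of the generating right-hand sides, while the energy estimate \eqref{eq:dc_EqOfMo_Lipschitz} of Proposition~\ref{prop:EqOfMo_well-posed} controls $\displacements_1-\displacements_2$ by the differences of $\forces$, $\stressforces$, and of $\damage$. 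Feeding the first bound into the second and absorbing the coupling term via a single application of Gronwall's lemma---exactly as in \eqref{eq:continuous_dependence_for_DaEvEq}--\eqref{eq:estimate_displacments_via_damageprocesses}---produces a joint Lipschitz estimate of $(\displacements,\damage)$ against all four data differences.

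Because each constituent estimate is already linear in the data differences, there is no genuine obstacle; the only point requiring a little care is the bookkeeping in the joint estimate, namely ensuring that the additional terms arising from varying $\forces$, $\stressforces$, and $\damage_0$ (rather than $\damageprocess$ alone) enter \emph{additively} and do not spoil the contraction/Gronwall argument. Since these appear as inhomogeneities independent of the time-integration variable, the same weighted norm $\|\cdot\|_\bilieckiconstant$ from \eqref{eq:Biliecki_norm} used in the theorem closes the argument and delivers the claimed Lipschitz continuity of $\ptsmap$.
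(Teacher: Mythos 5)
Your proposal is correct and follows essentially the same route as the paper, which gives no separate proof for this corollary and treats it as an immediate repackaging of Theorem \ref{theo:direct_problem_well-posed}: existence and uniqueness give single-valuedness, the fixed-point construction places $(\displacements,\damage)$ in the stated codomain, and the Lipschitz dependence is exactly what the theorem's proof establishes via \eqref{eq:continuous_dependence_for_DaEvEq}--\eqref{eq:estimate_displacments_via_damageprocesses}. Your added care about letting all four data entries vary jointly (rather than only $\damageprocess$, as the theorem's proof does ``without loss of generality'') is a sensible and accurate elaboration of the same argument, not a departure from it.
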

For a clear and brief presentation, the dependency of the parameter-to-state map on $\forces, \stressforces,$ and $\damage_0$ are henceforth omitted in their notation. Thus, we fix $\forces, \stressforces, \boundarydisplacements,\damage_0$ and interpret $\ptsmap$ as a mapping $\damageprocesses \to \integrablefunctions^\infty(\timedomain;\spatialfunctions)\times\damagefunctions$.
\subsection{Fréchet-Differentiability}\label{subsec:fo-op_is_differentiable}
Since the forward operator $\forwardoperator:=I\circ\pi_1\circ\ptsmap$ only consists of bounded linear operators except for $\ptsmap$, differentiability of $\ptsmap$ directly transfers to $\forwardoperator$.
Here, $I$ denotes the continuous embedding from $\integrablefunctions^\infty(\timedomain;\spatialfunctions)$ into $\integrablefunctions^2(\timedomain\times\spatialdomain)^{\dimension}$.
To prove differentiability of the \emph{parameter-to-state} map $\ptsmap\colon\damageprocesses\to\integrablefunctions^\infty(\timedomain;\spatialfunctions)\times\damagefunctions$, we first show in Lemma \ref{lem:nem-op_differentiable} that admissible damage processes $\damageprocess\in\damageprocesses$ indeed lead to differentiable operators.
We follow \cite[cf.~especially to Section 4.3]{FrTr10} in his presentation.
Afterwards, we employ the chain rule to show that $\ptsmap$ and consequently $\forwardoperator$ are indeed differentiable.
\begin{Lemma}\label{lem:nem-op_differentiable}
   Every admissible damage process $\damageprocess\in\damageprocesses$ generates a Fr\'echet-differentiable  Nemytskii operator $\nemytskiioperator\colon\integrablefunctions^\infty(\timedomain;\integrablefunctions^\infty(\spatialdomain)^{\dimension^2})\to\integrablefunctions^\infty(\timedomain;\integrablefunctions^\infty(\spatialdomain))$ via \eqref{eq:def_nem-op_b}, i.e., for all $\boldsymbol{f},\boldsymbol{h}\in\integrablefunctions^\infty(\timedomain;\integrablefunctions^\infty(\spatialdomain))^{\dimension^2}$
    \begin{equation}\label{eq:nem-op_derivative}
      ((\partial\nemytskiioperator(\boldsymbol{f}))\boldsymbol{h})(\timevariable,\spatialvariables) = \partial_{\boldsymbol{y}}\damageprocess\big(\timevariable,\spatialvariable,\boldsymbol{f}(\timevariable,\spatialvariables)\big)(\boldsymbol{h}(\timevariable,\spatialvariables))
    \end{equation}
    holds for almost all $\timevariable\in\timedomain$ and $\spatialvariables\in\spatialdomain$.
\end{Lemma}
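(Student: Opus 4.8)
The plan is to follow the classical scheme for differentiability of superposition operators (cf.\ \cite{FrTr10}), the only additional ingredient being the Bochner setting in which the outer function $\damageprocess$ depends on $(\timevariable,\spatialvariables)$ as well as on $\boldsymbol{y}$. First I would fix $\boldsymbol{f}$ with values essentially in the interior of $\overline{Y}$ — which is exactly the situation in our application, since Corollary \ref{cor:Operators_decoupled} keeps $\mollifiedgradient\displacements$ strictly inside the ball — so that for $\boldsymbol{h}$ small the whole segment $\boldsymbol{f}+s\boldsymbol{h}$, $s\in[0,1]$, stays admissible. The candidate derivative is the linear map $(\partial\nemytskiioperator(\boldsymbol{f}))\boldsymbol{h}:=\partial_{\boldsymbol{y}}\damageprocess(\cdot,\cdot,\boldsymbol{f})\,\boldsymbol{h}$ prescribed by \eqref{eq:nem-op_derivative}, and the proof splits into verifying that this is a bounded linear operator and that it approximates $\nemytskiioperator$ to first order.

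The first step is to check that the candidate maps $\integrablefunctions^\infty(\timedomain;\integrablefunctions^\infty(\spatialdomain))^{\dimension^2}$ into $\integrablefunctions^\infty(\timedomain;\integrablefunctions^\infty(\spatialdomain))$ boundedly. Linearity is immediate. For boundedness, $\damageprocess(\timevariable,\spatialvariables,\cdot)\in\continuousfunctions^{1,1}(\overline{Y})$ a.e.\ gives $\partial_{\boldsymbol{y}}\damageprocess(\timevariable,\spatialvariables,\cdot)\in\continuousfunctions^{0,1}(\overline{Y})^{\dimension^2}$ a.e., with essential supremum controlled by $\|\damageprocess\|_{\integrablefunctions^\infty(\timedomain;\integrablefunctions^\infty(\spatialdomain;\continuousfunctions^{1,1}(\overline{Y})))}$; Cauchy--Schwarz in $\mathbb{R}^{\dimension^2}$ then yields the pointwise bound $|\partial_{\boldsymbol{y}}\damageprocess(\timevariable,\spatialvariables,\boldsymbol{f})\boldsymbol{h}|\le\|\partial_{\boldsymbol{y}}\damageprocess\|\,|\boldsymbol{h}|$, hence $\|(\partial\nemytskiioperator(\boldsymbol{f}))\boldsymbol{h}\|\le c\|\boldsymbol{h}\|$. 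That the image is genuinely measurable in $(\timevariable,\spatialvariables)$ is established by the simple-function approximation argument of Lemma \ref{lem:nem-op_well-posed}, now applied to the superposition generated by $\partial_{\boldsymbol{y}}\damageprocess$ in place of $\damageprocess$. This is precisely where the $\continuousfunctions^{1,1}$-regularity imposed in \eqref{eq:damage_sources} is consumed, since $\continuousfunctions^{0,1}$ would not make $\partial_{\boldsymbol{y}}\damageprocess$ the generator of a well-defined Nemytskii operator.

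The second step is the remainder estimate. For a.e.\ $(\timevariable,\spatialvariables)$ the map $\boldsymbol{y}\mapsto\damageprocess(\timevariable,\spatialvariables,\boldsymbol{y})$ is $\continuousfunctions^1$, so the fundamental theorem of calculus gives
\begin{equation*}
  \damageprocess(\timevariable,\spatialvariables,\boldsymbol{f}+\boldsymbol{h}) - \damageprocess(\timevariable,\spatialvariables,\boldsymbol{f}) - \partial_{\boldsymbol{y}}\damageprocess(\timevariable,\spatialvariables,\boldsymbol{f})\boldsymbol{h}
    = \int_0^1 \big(\partial_{\boldsymbol{y}}\damageprocess(\timevariable,\spatialvariables,\boldsymbol{f}+s\boldsymbol{h}) - \partial_{\boldsymbol{y}}\damageprocess(\timevariable,\spatialvariables,\boldsymbol{f})\big)\boldsymbol{h}\,ds .
\end{equation*}
Using the Lipschitz continuity of $\partial_{\boldsymbol{y}}\damageprocess$ in $\boldsymbol{y}$ — once more the $\continuousfunctions^{1,1}$ property, with a Lipschitz constant uniform in $(\timevariable,\spatialvariables)$ — each integrand is bounded by $\textnormal{Lip}(\partial_{\boldsymbol{y}}\damageprocess)\,s|\boldsymbol{h}|^2$, so integration in $s$ bounds the pointwise remainder by $\tfrac12\textnormal{Lip}(\partial_{\boldsymbol{y}}\damageprocess)\,|\boldsymbol{h}(\timevariable,\spatialvariables)|^2$. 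Taking the essential supremum over $\spatialvariables$ and then over $\timevariable$ yields
\begin{equation*}
  \|\nemytskiioperator(\boldsymbol{f}+\boldsymbol{h}) - \nemytskiioperator(\boldsymbol{f}) - (\partial\nemytskiioperator(\boldsymbol{f}))\boldsymbol{h}\|_{\integrablefunctions^\infty(\timedomain;\integrablefunctions^\infty(\spatialdomain))} \le c\,\|\boldsymbol{h}\|^2_{\integrablefunctions^\infty(\timedomain;\integrablefunctions^\infty(\spatialdomain))^{\dimension^2}},
\end{equation*}
which is $o(\|\boldsymbol{h}\|)$ as $\|\boldsymbol{h}\|\to 0$, thereby proving Fréchet-differentiability with derivative \eqref{eq:nem-op_derivative}.

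I expect the main obstacle to be the first step rather than the remainder estimate: the pointwise Taylor bound is entirely routine, whereas verifying that $\boldsymbol{h}\mapsto\partial_{\boldsymbol{y}}\damageprocess(\cdot,\cdot,\boldsymbol{f})\boldsymbol{h}$ actually lands in $\integrablefunctions^\infty(\timedomain;\integrablefunctions^\infty(\spatialdomain))$ requires redeploying the full measurability machinery of Lemma \ref{lem:nem-op_well-posed} and is the structural reason for demanding the stronger $\continuousfunctions^{1,1}$-regularity of $\damageprocesses$. A secondary technical point is the domain restriction to $\overline{Y}$: differentiability is clean at interior points, and near $\partial\overline{Y}$ one either argues locally or passes to a $\continuousfunctions^{1,1}_{\text{loc}}$ extension as noted in Remark \ref{rem:comp_cond}\,(b).
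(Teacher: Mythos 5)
Your proposal is correct and follows essentially the same route as the paper: the candidate derivative $\boldsymbol{h}\mapsto\partial_{\boldsymbol{y}}\damageprocess(\cdot,\cdot,\boldsymbol{f})\boldsymbol{h}$, a Taylor expansion with integral remainder, and the quadratic bound $\tfrac12\|\damageprocess\|_{\damageprocesses}|\boldsymbol{h}|^2$ obtained from the Lipschitz continuity of $\partial_{\boldsymbol{y}}\damageprocess$ guaranteed by the $\continuousfunctions^{1,1}(\overline{Y})$-regularity, followed by linearity, boundedness, and measurability of the candidate via the simple-function machinery of Lemma \ref{lem:nem-op_well-posed}. Your additional remarks on where the $\continuousfunctions^{1,1}$-assumption is consumed and on the interior-of-$\overline{Y}$ issue are refinements the paper leaves implicit, not a different argument.
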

\begin{proof}
  In Lemma \ref{lem:nem-op_well-posed} we showed well-posedness of $\nemytskiioperator$. To prove differentiability and particularly \cref{eq:nem-op_derivative} for every $\boldsymbol{f}\in\integrablefunctions^{\infty}(\timedomain;\integrablefunctions^{\infty}(\spatialdomain))^{\dimension^2}$ we have to search for a linear operator $\partial\nemytskiioperator(\boldsymbol{f})\in\linearoperators(\integrablefunctions^\infty(\timedomain;\integrablefunctions^\infty(\spatialdomain)^{\dimension^2});\integrablefunctions^\infty(\timedomain;\integrablefunctions^\infty(\spatialdomain)))$ satisfying said equation as well as
  \begin{equation}\label{eq:def_differentiable}
    \nemytskiioperator(\boldsymbol{f}+\boldsymbol{h})-\nemytskiioperator(\boldsymbol{f})=(\partial\nemytskiioperator(\boldsymbol{f}))\boldsymbol{h} + o(\|\boldsymbol{h}\|_{\integrablefunctions^{\infty}(\timedomain;\integrablefunctions^{\infty}(\spatialdomain))^{\dimension^2}})\quad (\|\boldsymbol{h}\|_{\integrablefunctions^{\infty}(\timedomain;\integrablefunctions^{\infty}(\spatialdomain))^{\dimension^2}}\to 0).
  \end{equation}
  Let $\timevariable\in\timedomain$, $\spatialvariables\in\spatialdomain$ be fixed and $\boldsymbol{f},\boldsymbol{h}\in\integrablefunctions^\infty(\timedomain;\integrablefunctions^\infty(\spatialdomain))^{\dimension^2}$. Together with Lemma \ref{lem:nem-op_well-posed} we have
  \begin{equation}\label{eq:nem-op_Taylor_expansion}
    \nemytskiioperator(\boldsymbol{f}+\boldsymbol{h})(\timevariable,\spatialvariable)-\nemytskiioperator(\boldsymbol{f})(\timevariable,\spatialvariable) %
    = \partial_{\boldsymbol{y}}\damageprocess\big(\timevariable,\spatialvariables,\boldsymbol{f}(\timevariable,\spatialvariables)\big)\boldsymbol{h}(\timevariable,\spatialvariables) + r(\boldsymbol{f},\boldsymbol{h})(\timevariable,\spatialvariables)
  \end{equation}
  where
  \begin{equation}\label{eq:nem-op_Taylor_remainder}
    r(\boldsymbol{f},\boldsymbol{h})(\timevariable,\spatialvariables)
        :=\int_0^1\left(\partial_{\boldsymbol{y}}\damageprocess\big(\timevariable,\spatialvariables,\boldsymbol{f}(\timevariable,\spatialvariables)+s\boldsymbol{h}(\timevariable,\spatialvariables)\big)-\partial_{\boldsymbol{y}}\damageprocess\big(\timevariable,\spatialvariables,\boldsymbol{f}(\timevariable,\spatialvariables)\big)\right)\boldsymbol{h}(\timevariable,\spatialvariables)\,ds
    \le\frac{1}{2}\|\damageprocess\|_{\damageprocesses}\left|\boldsymbol{h}(\timevariable,\spatialvariables)\right|_{\mathbb{R}^{\dimension^2}}^2.
  \end{equation}
  This leads us to
  \begin{equation}\label{eq:nem-op_remainder_of_first_order}
    \|r(\boldsymbol{f},\boldsymbol{h})\|_{\integrablefunctions^\infty(\timedomain;\integrablefunctions^\infty(\spatialdomain))}\le C\|\boldsymbol{h}\|_{\integrablefunctions^\infty(\timedomain;\integrablefunctions^\infty(\spatialdomain))^{\dimension^2}}^2,
  \end{equation}
  thus proving $r(\boldsymbol{f},\boldsymbol{h})=o(\|\boldsymbol{h}\|_{\integrablefunctions^{\infty}(\timedomain;\integrablefunctions^{\infty}(\spatialdomain))^{\dimension^2}})$. We see that $\boldsymbol{h}(\cdot,\cdot)\mapsto\partial_{\boldsymbol{y}}\damageprocess(\cdot,\cdot,\boldsymbol{f}(\cdot,\cdot))\boldsymbol{h}(\cdot,\cdot)$ is linear and continuous in $\integrablefunctions^\infty\big(\timedomain;\integrablefunctions^\infty(\spatialdomain)^{\dimension^2})\big)$ and $\partial_{\boldsymbol{y}}\damageprocess(\timevariable,\spatialvariables,\boldsymbol{f}(\timevariable,\spatialvariables)$ is bounded because of assumptions made on $\partial_{\boldsymbol{y}}\damageprocess$. As limit of measurable functions $\partial_{\boldsymbol{y}}\damageprocess$ is also measurable. Thus we have proven all properties to verify \cref{eq:nem-op_derivative}.
\end{proof}
\begin{Remark}
  Note that local Lipschitz continuity of the first derivative with respect to $\boldsymbol{f}$ would be sufficient to prove differentiability here, i.e., $\damageprocess(\timevariable,\spatialvariables)\in\continuousfunctions^{\,\text{-}1,1}(\mathbb{R})^{\dimension^2}$.%
\end{Remark}
Theorem \ref{theo:derivative_of_pts_map} shows that derivatives of the parameter-to-state map can be characterized by solutions of linear PDE.
As before, we gather some results on differentiability of the decoupled problems.
We start showing differentiability of $O_E$ at $\damage\in\damagefunctions$ in Lemma \ref{lem:O_E_is_differentiable} and $O_D$ at $y\in\integrablefunctions^{\infty}(\timedomain;\integrablefunctions^{\infty}(\spatialdomain))$ in Lemma \ref{lem:O_D_is_differentiable}, afterwards.
We also prepend an auxiliary result on a specific mapping.
\begin{Lemma}\label{lem:O_E_is_differentiable}
  Given the conditions of Proposition \ref{prop:EqOfMo_well-posed} are met, the operator $O_E\colon\damagefunctions\to\integrablefunctions^{\infty}(\timedomain;\spatialfunctions)$ is differentiable with $\partial O_E\colon\damagefunctions\to\linearoperators(\sobolevfunctions^{1,\infty}(\timedomain;\integrablefunctions^{\infty}(\spatialdomain));\integrablefunctions^{\infty}(\timedomain;\spatialfunctions))$. For $\perturbation\in\sobolevfunctions^{1,\infty}(\timedomain;\integrablefunctions^{\infty}(\spatialdomain))$, $\damage\in\damagefunctions$ we can characterize $\partial O_E(\damage)\perturbation$ as the unique solution to
  \begin{equation}\label{eq:O_E_derivative_character}
    \mathcal{\elasticityoperator}(\damage)(\partial O_E(\damage)\perturbation) = -\mathcal{\elasticityoperator}(1+\perturbation)(O_E(\damage))\text{ in }\integrablefunctions^{\infty}(\timedomain;\spatialfunctions^*)
  \end{equation}
  and, hence,
  \begin{equation}\label{eq:O_E_derivative_character_1}
    \partial O_E(\damage)\perturbation = O_E^{\forces(\perturbation)}(\damage)
  \end{equation}
  where $\forces(\perturbation):=-\mathcal{\elasticityoperator}(1+\perturbation)(O_E(\damage))$.
\end{Lemma}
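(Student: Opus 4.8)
The plan is to verify the definition of Fréchet differentiability directly: exhibit a bounded linear candidate derivative and then show that the induced remainder is of quadratic order in $\perturbation$. The guiding heuristic is to differentiate the defining identity $\timedependentelasticityoperator(\damage)O_E(\damage)=\forces+\stressforces$ formally in the direction $\perturbation$. Since $\damage\mapsto\timedependentelasticityoperator(\damage)$ is affine (the damage enters only through the factor $1-\damage$), the product rule yields $\timedependentelasticityoperator(\damage)(\partial O_E(\damage)\perturbation)=-\timedependentelasticityoperator(1+\perturbation)(O_E(\damage))$, which is exactly \eqref{eq:O_E_derivative_character}. The whole proof amounts to making this heuristic rigorous.

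First I would define the candidate derivative. For fixed $\damage\in\damagefunctions$ and any $\perturbation\in\sobolevfunctions^{1,\infty}(\timedomain;\integrablefunctions^{\infty}(\spatialdomain))$, the right-hand side $-\timedependentelasticityoperator(1+\perturbation)(O_E(\damage))$ lies in $\integrablefunctions^{\infty}(\timedomain;\spatialfunctions^*)$, because $O_E(\damage)\in\integrablefunctions^{\infty}(\timedomain;\spatialfunctions)$, $\perturbation$ is essentially bounded, and $\elasticitytensor$ has $\integrablefunctions^\infty$ coefficients. Hence \Cref{prop:EqOfMo_well-posed} provides a unique solution $w=:L\perturbation\in\integrablefunctions^{\infty}(\timedomain;\spatialfunctions)$ of \eqref{eq:O_E_derivative_character}. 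Linearity of $\perturbation\mapsto L\perturbation$ holds since $\perturbation\mapsto-\timedependentelasticityoperator(1+\perturbation)(O_E(\damage))$ is linear (its coefficient is $+\perturbation$) and $\timedependentelasticityoperator(\damage)^{-1}$ is linear; boundedness follows from the energy estimate \eqref{eq:EqOfMo_bound_for_displacements}, which gives $\|L\perturbation\|_{\integrablefunctions^{\infty}(\timedomain;\spatialfunctions)}\le c\,\|\elasticitytensor\|\,\|O_E(\damage)\|_{\integrablefunctions^{\infty}(\timedomain;\spatialfunctions)}\,\|\perturbation\|_{\integrablefunctions^{\infty}(\timedomain;\integrablefunctions^\infty(\spatialdomain))}$, and the last norm is dominated by the $\sobolevfunctions^{1,\infty}$-norm. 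This establishes $L\in\linearoperators(\sobolevfunctions^{1,\infty}(\timedomain;\integrablefunctions^\infty(\spatialdomain));\integrablefunctions^\infty(\timedomain;\spatialfunctions))$.

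The core of the argument is the remainder estimate. I abbreviate $\displacements:=O_E(\damage)$ and, for $\perturbation$ small enough that $\damage+\perturbation$ still yields a coercive operator (only the strict upper bound $1-\damage-\perturbation\ge c_0>0$ is needed, so this holds once $\|\perturbation\|$ is small), $\displacements_\perturbation:=O_E(\damage+\perturbation)$. Subtracting the weak forms of $\timedependentelasticityoperator(\damage+\perturbation)\displacements_\perturbation=\forces+\stressforces$ and $\timedependentelasticityoperator(\damage)\displacements=\forces+\stressforces$ and regrouping the coefficient $1-\damage-\perturbation$ gives
\begin{equation*}
  \timedependentelasticityoperator(\damage)(\displacements_\perturbation-\displacements)=-\timedependentelasticityoperator(1+\perturbation)\displacements-\timedependentelasticityoperator(1+\perturbation)(\displacements_\perturbation-\displacements).
\end{equation*}
Subtracting \eqref{eq:O_E_derivative_character}, the remainder $r(\perturbation):=\displacements_\perturbation-\displacements-L\perturbation$ is seen to solve
\begin{equation*}
  \timedependentelasticityoperator(\damage)\,r(\perturbation)=-\timedependentelasticityoperator(1+\perturbation)(\displacements_\perturbation-\displacements)\quad\text{in }\integrablefunctions^\infty(\timedomain;\spatialfunctions^*).
\end{equation*}
Applying \eqref{eq:EqOfMo_bound_for_displacements} once more and then the Lipschitz bound \eqref{eq:dc_EqOfMo_Lipschitz} for $O_E$ (with only $\damage$ varying) yields
\begin{equation*}
  \|r(\perturbation)\|_{\integrablefunctions^\infty(\timedomain;\spatialfunctions)}\le c\,\|\perturbation\|_{\integrablefunctions^\infty(\timedomain;\integrablefunctions^\infty(\spatialdomain))}\,\|\displacements_\perturbation-\displacements\|_{\integrablefunctions^\infty(\timedomain;\spatialfunctions)}\le c\,\|\perturbation\|_{\sobolevfunctions^{1,\infty}(\timedomain;\integrablefunctions^\infty(\spatialdomain))}^2,
\end{equation*}
so $r(\perturbation)=o(\|\perturbation\|)$ and $\partial O_E(\damage)=L$. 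Reading the right-hand side of \eqref{eq:O_E_derivative_character} as a force $\forces(\perturbation):=-\timedependentelasticityoperator(1+\perturbation)(O_E(\damage))$ finally gives the representation \eqref{eq:O_E_derivative_character_1}.

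I expect the main obstacle to be the clean derivation of the remainder equation together with the bookkeeping ensuring that the quadratic smallness is genuine: the decisive structural fact is that $\timedependentelasticityoperator$ is affine in $\damage$, so the ``second-order'' term $-\timedependentelasticityoperator(1+\perturbation)(\displacements_\perturbation-\displacements)$ couples the first-order perturbation $\perturbation$ with the already-Lipschitz increment $\displacements_\perturbation-\displacements$, forcing the product bound $c\|\perturbation\|^2$. A minor but necessary point to handle carefully is the coercivity of $\timedependentelasticityoperator(\damage+\perturbation)$ for small $\perturbation$; this is why the derivative is naturally taken with respect to $\sobolevfunctions^{1,\infty}$-perturbations of an interior $\damage\in\damagefunctions$, whose strict upper bound $\damageconstant_1<1$ leaves room for a perturbation.
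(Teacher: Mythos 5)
Your proposal is correct and follows essentially the same route as the paper: define the candidate derivative as the unique Lax--Milgram solution of \eqref{eq:O_E_derivative_character} (possible because the right-hand side is linear and bounded in $\perturbation$), then exploit the affine dependence of $\timedependentelasticityoperator$ on $\damage$ to derive the remainder identity $\timedependentelasticityoperator(\damage)r(\perturbation)=-\timedependentelasticityoperator(1+\perturbation)(O_E(\damage+\perturbation)-O_E(\damage))$, and conclude quadratic smallness from coercivity of $\timedependentelasticityoperator(\damage)$ together with the Lipschitz continuity of $O_E$. The paper packages the same computation as a single ``add a zero'' identity \eqref{eq:O_E_derivative_eq_1} followed by strong monotonicity, so the two arguments coincide in substance.
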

\begin{proof}
  We have to show that for every $\damage\in\damagefunctions$ exists a uniquely determined linear operator $\partial O_E(\damage)$ such that for every $\varepsilon>0$ there is some $c_\varepsilon>0$ so every $\perturbation\in\sobolevfunctions^{1,\infty}(\timedomain;\integrablefunctions^{\infty}(\spatialdomain))$ with $\|\perturbation\|\le c_\varepsilon$ and $\damage+\perturbation\in\damagefunctions$ satisfies
  \begin{equation}\label{eq:def_f-derivative_for_O_E}
    \|O_E(\damage + \perturbation) - O_E(\damage) - \partial O_E(\damage)\perturbation\|_{\integrablefunctions^{\infty}(\timedomain;\spatialfunctions^*)}\le\varepsilon\|\perturbation\|_{\sobolevfunctions^{1,\infty}(\timedomain;\integrablefunctions^{\infty}(\spatialdomain))}
  \end{equation}
  as well as the identity in \eqref{eq:O_E_derivative_character}. At first, we note that there exists a uniquely determined $\boldsymbol{w}_\perturbation\in\integrablefunctions^{\infty}(\timedomain;\spatialfunctions)$ solving $\mathcal{\elasticityoperator}(\damage)(\boldsymbol{w}_\perturbation) = -\mathcal{\elasticityoperator}(1+\perturbation)(O_E(\damage))\text{ in }\integrablefunctions^{\infty}(\timedomain;\spatialfunctions^*)$ because
  \begin{multline}\label{eq:O_E_is_differentiable_estimate_1}
    \esssup_{\timevariable\in\timedomain}\left(\sup_{\spatialtestfunctions\in\spatialfunctions}\left(\frac{\left|\langle\mathcal{\elasticityoperator}(1+\perturbation)(O_E(\damage)),\spatialtestfunctions\rangle\right|}{\|\spatialtestfunctions\|_{\spatialfunctions}}\right)\right) \\
      \le \|\perturbation\|_{\sobolevfunctions^{1,\infty}(\timedomain;\integrablefunctions^\infty(\spatialdomain))}\|\elasticityoperator\|_{\linearoperators(\spatialfunctions;\spatialfunctions^*)}(\|\forces\|_{\integrablefunctions^{\infty}(\timedomain;\spatialfunctions^*)} + \|\stressforces\|_{\integrablefunctions^{\infty}(\timedomain;\boundaryfunctions^*)} ) < \infty
  \end{multline}
  entails $\mathcal{\elasticityoperator}(1+\perturbation)(O_E(\damage))\in\integrablefunctions^\infty(\timedomain;\spatialfunctions^*)$ for every $\damage\in\damagefunctions$ so that we can use a similar reasoning as in Proposition \ref{prop:EqOfMo_well-posed} to guarantee existence and uniqueness of said solution.
  We recall the definition of $\mathcal{\elasticityoperator}$ via \eqref{eq:Operator_A} and notice the right-hand side to be linear in $\perturbation$ which transfers directly to the operator mapping $\perturbation$ to $\boldsymbol{w}_\perturbation$.
  This actually establishes $w_\perturbation=O_E^{\forces(\perturbation)}(\damage)$, too.
  To see \eqref{eq:def_f-derivative_for_O_E} we take take $\forces$ and $\stressforces$ from Proposition \ref{prop:EqOfMo_well-posed} and by adding a zero we get
  \begin{multline}\label{eq:O_E_derivative_eq_1}
    0 = \mathcal{\elasticityoperator}(\damage+\perturbation)O_E(\damage+\perturbation) - \mathcal{\elasticityoperator}(\damage)O_E(\damage) \\
      = \mathcal{\elasticityoperator}(\damage)(O_E(\damage+\perturbation)-O_E(\damage)-\boldsymbol{w}_\perturbation)+ \mathcal{\elasticityoperator}(1+\perturbation)(O_E(\damage+\perturbation)-O_E(\damage))\text{ in }\integrablefunctions^{\infty}(\timedomain;\spatialfunctions^*).
  \end{multline}
  Rearrangement, strong monotonicity of $\mathcal{\elasticityoperator}(\damage)$, and \eqref{eq:O_E_is_differentiable_estimate_1} then reveal
  \begin{multline}\label{eq:O_E_derivative_estimate_2}
    \|O_E(\damage+\perturbation)-O_E(\damage)-\boldsymbol{w}_g\|_{\integrablefunctions^{\infty}(\timedomain;\spatialfunctions)} \\
      \le c(\elasticityoperator)\|\elasticityoperator\|_{\linearoperators(\spatialfunctions;\spatialfunctions^*)}(\|\forces\|_{\integrablefunctions^{\infty}(\timedomain;\spatialfunctions^*)} + \|\stressforces\|_{\integrablefunctions^{\infty}(\timedomain;\boundaryfunctions^*)})\|\perturbation\|_{\sobolevfunctions^{1,\infty}(\timedomain;\integrablefunctions^{\infty}(\spatialdomain))}^2.
  \end{multline}
  Choosing $c_\varepsilon<\varepsilon(c(\elasticityoperator)\|\elasticityoperator\|(\|\forces\|+\|\stressforces\|))^{-1}$ and uniqueness of derivatives thus shows \eqref{eq:def_f-derivative_for_O_E}.
\end{proof}
\begin{Lemma}\label{lem:O_D_is_differentiable}
 Assuming the conditions of Proposition \ref{prop:DaEvEq_well-posed} are satisfied, the operator $O_D\colon\domain(O_D)\to\damagefunctions\subset\sobolevfunctions^{1,\infty}(\timedomain;\integrablefunctions^{\infty}(\spatialdomain))$ is Fréchet-differentiable with $\partial O_D\colon\domain(O_D)\to\linearoperators(\integrablefunctions^{\infty}(\timedomain;\integrablefunctions^{\infty}(\spatialdomain));\damagefunctions)$. Let $\perturbation\in\integrablefunctions^{\infty}(\timedomain;\integrablefunctions^{\infty}(\spatialdomain))$ denote a perturbation of $y\in\domain(O_D)$ such that $y+\perturbation\in\domain(O_D)$. Then $\partial O_D(y)\perturbation$ is characterized by being the unique solution to
  \begin{subequations}\label{eq:O_D_derivative_character}
    \begin{alignat}{2}\nonumber
      (\partial O_D(y)\perturbation)'
      &=\left(1-O_D(y) \right)^{-\alpha}\perturbation \\ \label{eq:O_D_derivative_character_1}
      &\quad +\alpha(1-O_D(y))^{-(\alpha + 1)}y\partial O_D(y)\perturbation&\text{ in }&\integrablefunctions^{\infty}(\timedomain;\integrablefunctions^{\infty}(\spatialdomain)), \\ %
      \label{eq:O_D_derivative_character_2} %
      \partial O_D(y)\perturbation(0)&=0  &\text{ in }&\integrablefunctions^{\infty}(\spatialdomain).
    \end{alignat}
  \end{subequations}
\end{Lemma}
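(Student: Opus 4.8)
The plan is to build the candidate derivative as the unique solution of the linear initial value problem \eqref{eq:O_D_derivative_character}, and then to verify the defining limit of the Fréchet derivative by a second–order Taylor expansion combined with Gronwall's lemma. First I would fix $\damage := O_D(y)$ and introduce the auxiliary function $\phi(s):=(1-s)^{-\alpha}$. Since $\damage\in\damagefunctions$ obeys $0\le\damage\le\damageconstant_1<1$, we stay uniformly away from the singularity of $\phi$ at $s=1$, so $\phi,\phi',\phi''$ are bounded on $[0,\damageconstant_1]$. In particular the coefficient $a:=\alpha(1-\damage)^{-(\alpha+1)}y$ and the datum $b:=(1-\damage)^{-\alpha}\perturbation$ both lie in $\integrablefunctions^\infty(\timedomain;\integrablefunctions^\infty(\spatialdomain))$, so \eqref{eq:O_D_derivative_character} is the linear problem $w'=a\,w+b$, $w(0)=0$, with $\spatialvariables$ acting as a parameter. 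Arguing exactly as in the proof of Proposition \ref{prop:DaEvEq_well-posed} (fixed point in the Bielecki norm, or directly via variation of constants) yields a unique $w_\perturbation\in\sobolevfunctions^{1,\infty}(\timedomain;\integrablefunctions^\infty(\spatialdomain))$. Linearity of $\perturbation\mapsto w_\perturbation$ is immediate from linearity of the equation in $(\perturbation,w)$, and the accompanying Gronwall estimate gives $\|w_\perturbation\|\le\constant\|\perturbation\|$, so $\partial O_D(y):=(\perturbation\mapsto w_\perturbation)$ is a bounded linear operator. This is the candidate for the derivative.

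Next I would set up the error equation. Put $\tilde{\damage}:=O_D(y+\perturbation)$ and $z:=\tilde{\damage}-\damage-w_\perturbation$, so that $z(0)=0$. Subtracting the three defining identities $\tilde{\damage}'=\phi(\tilde{\damage})(y+\perturbation)$, $\damage'=\phi(\damage)y$, and $w_\perturbation'=\phi(\damage)\perturbation+\phi'(\damage)y\,w_\perturbation$, and inserting the expansion
\[
\phi(\tilde{\damage})=\phi(\damage)+\phi'(\damage)(\tilde{\damage}-\damage)+\tfrac12\phi''(\xi)(\tilde{\damage}-\damage)^2,
\]
valid pointwise a.e.\ with $\xi$ between $\damage$ and $\tilde{\damage}$, a direct rearrangement collects the terms into
\[
z'=\alpha(1-\damage)^{-(\alpha+1)}\,y\,z+g,\qquad z(0)=0,
\]
where the remainder $g:=\alpha(1-\damage)^{-(\alpha+1)}(\tilde{\damage}-\damage)\perturbation+\tfrac12\phi''(\xi)(\tilde{\damage}-\damage)^2(y+\perturbation)$ gathers the genuinely higher–order contributions. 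The Lipschitz dependence from Proposition \ref{prop:DaEvEq_well-posed} supplies $\|\tilde{\damage}-\damage\|_{\integrablefunctions^\infty(\timedomain;\integrablefunctions^\infty(\spatialdomain))}\le\constant\|\perturbation\|$, and together with the uniform bounds on $\phi''$ and on $y,\perturbation$ this yields $\|g\|_{\integrablefunctions^\infty(\timedomain;\integrablefunctions^\infty(\spatialdomain))}\le C\|\perturbation\|^2$.

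Then I would close the argument via Gronwall. Since the coefficient $\alpha(1-\damage)^{-(\alpha+1)}y$ is essentially bounded, integrating the error equation over $(0,\timevariable)$ and invoking Gronwall's lemma gives $\|z\|_{\integrablefunctions^\infty(\timedomain;\integrablefunctions^\infty(\spatialdomain))}\le C\|\perturbation\|^2$. Feeding this back into the error equation bounds $\|z'\|$ by the same order, so that $\|z\|_{\sobolevfunctions^{1,\infty}(\timedomain;\integrablefunctions^\infty(\spatialdomain))}=O(\|\perturbation\|^2)=o(\|\perturbation\|)$, which is precisely the Fréchet condition in the ambient norm of $\damagefunctions$. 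Uniqueness of the derivative then identifies $\partial O_D(y)\perturbation=w_\perturbation$ as the solution of \eqref{eq:O_D_derivative_character}, and $w_\perturbation\in\sobolevfunctions^{1,\infty}$ confirms the claimed mapping property.

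The main obstacle is the bookkeeping in the error equation: one must check that every term not linear in $z$ is genuinely quadratic in $\perturbation$. This hinges on two points, namely keeping $\damage,\tilde{\damage}$ (and hence $\xi$) bounded away from the singularity $s=1$ so that $\phi$ and its derivatives stay uniformly bounded, and the a priori estimate $\|\tilde{\damage}-\damage\|\le\constant\|\perturbation\|$ from Proposition \ref{prop:DaEvEq_well-posed}. Once these are secured, the remaining analysis is the same linear–ODE–with–Gronwall machinery already used to establish well-posedness of the decoupled damage evolution.
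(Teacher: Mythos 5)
Your proposal is correct and follows essentially the same route as the paper's proof: construct $w_\perturbation$ as the unique solution of the linear problem \eqref{eq:O_D_derivative_character} by the fixed-point/Gronwall machinery of Proposition \ref{prop:DaEvEq_well-posed}, subtract the evolution equations for $O_D(y+\perturbation)$ and $O_D(y)$, Taylor-expand $(1-s)^{-\alpha}$ around $O_D(y)$, and absorb the linear-in-$z$ term via Gronwall to get the $o(\|\perturbation\|)$ bound, first in $\integrablefunctions^\infty$ and then, by feeding back into the equation, in $\sobolevfunctions^{1,\infty}$. Your explicit second-order remainder $g$ with the bound $\|g\|\le C\|\perturbation\|^2$ via $\|O_D(y+\perturbation)-O_D(y)\|\le\constant\|\perturbation\|$ is in fact a more careful rendering of the step the paper compresses into ``$+\,o(\|\perturbation\|)$ after adding a zero.''
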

\begin{proof}
  In \eqref{eq:O_D_derivative_character}, we replace $\partial O_D(y)\perturbation$ through $w_\perturbation$ and acknowledge that the resulting problem is uniquely solvable by following the same strategy as in arguing Proposition \ref{prop:DaEvEq_well-posed}.
  The necessary adaptions are straight forward. Clearly, $w_\perturbation$ is linear in $\perturbation$ and we turn to prove $w_\perturbation = \partial O_D(y)\perturbation$ holds true.
  Proposition \ref{prop:DaEvEq_well-posed} and Corollary \ref{cor:Operators_decoupled} entail that $O_D(y+\perturbation)$, $O_D(y)$ solve the respective damage evolution equations.
  Subtracting these equations from one another leads to
  \begin{multline}\label{eq:O_D_derivative_character_3}
    O_D(y+\perturbation)'-O_D(y)'
      = (1-O_D(y))^{-\alpha}\perturbation + \alpha(1-O_D(y))^{-(\alpha +1)}y(O_D(y+\perturbation)-O_D(y))\\
      + o(\|\perturbation\|_{\integrablefunctions^{\infty}(\timedomain;\integrablefunctions^{\infty}(\spatialdomain))})\text{ in }\integrablefunctions^{\infty}(\timedomain;\integrablefunctions^{\infty}(\spatialdomain))
  \end{multline}
  if differentiability of $y\mapsto y^{-\alpha}$ in $(0,\infty)$ for $\alpha\ge 1$ after adding a zero is considered. The first expression on the right-hand side can be recast since $w_\perturbation$ solves \eqref{eq:O_D_derivative_character} and, hence, leads to
  \begin{multline}\label{eq:O_D_derivative_character_4}
    O_D(y+\perturbation)'-O_D(y)-w_\perturbation' \\
      = \alpha(1-O_D(y))^{-(\alpha+1)}(O_D(y+\perturbation)-O_D(y)-w_\perturbation) + o(\|\perturbation\|)\text{ in }\integrablefunctions^{\infty}(\timedomain;\integrablefunctions^{\infty}(\spatialdomain)).
  \end{multline}
  Integration over $(0,\timevariable)$, standard estimates and applying Gronwal's lemma again, we can show
  \begin{equation}\label{eq:O_D_derivative_character_5}
    \|O_D(y+\perturbation)-O_D(y)-w_\perturbation\|_{\integrablefunctions^{\infty}(\timedomain;\integrablefunctions^{\infty}(\spatialdomain))}\le o(\|\perturbation\|).
  \end{equation}
  Together with \eqref{eq:O_D_derivative_character_4} this proves differentiability with $\partial O_D(y)\perturbation = w_\perturbation$.
\end{proof}
\begin{Lemma}\label{lem:auxiliary_function_is_differentiable}
  Let $f\in\continuousfunctions^{1,1}(\damageprocesses;\integrablefunctions^{\infty}(\timedomain;\integrablefunctions^{\infty}(\spatialdomain))^{\dimension^2})$. Then
  \begin{equation}\label{eq:def_auxiliary_function_1}
    \Psi_f(\damageprocess)(\timevariable,\spatialvariables):=\damageprocess(\timevariable,\spatialvariables,f(\damageprocess)(\timevariable,\spatialvariables))
  \end{equation}
  introduces a differentiable mapping
  \begin{equation}\label{eq:def_auxiliary_function_2}
    \Psi_f\colon\damageprocesses\to\integrablefunctions^{\infty}(\timedomain;\integrablefunctions^{\infty}(\spatialdomain))
  \end{equation}
  that satisfies
  \begin{equation}\label{eq:auxiliary_function_derivative}
    \partial\Psi_f(\damageprocess)h = \partial\nemytskiioperator(f(\damageprocess))\partial f(\damageprocess)h + H(f(\damageprocess)).
  \end{equation}
  for $h\in\integrablefunctions^{\infty}(\timedomain;\integrablefunctions^{\infty}(\spatialdomain;\continuousfunctions^{1,1}(\overline{Y})))$ with $\damageprocess + h\in\damageprocesses$ and $H$ denoting the respective Nemytskii operator generated by $h$, see Lemma \ref{lem:nem-op_well-posed} and Lemma \ref{lem:nem-op_differentiable}.
\end{Lemma}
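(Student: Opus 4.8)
The plan is to expand the increment $\Psi_f(\damageprocess + h) - \Psi_f(\damageprocess)$ pointwise in $(\timevariable,\spatialvariables)$, isolate its part that is linear in $h$, and show the remainder is $o(\|h\|)$. The conceptual observation is that $\damageprocess$ enters $\Psi_f$ in two roles — as the function being evaluated and, through $f(\damageprocess)$, as the point of evaluation — so the derivative must split into the two summands appearing in \eqref{eq:auxiliary_function_derivative}. (Well-definedness of the composition is guaranteed as long as $f(\damageprocess)$ takes values in $\overline{Y}$ a.e., as is the case in the intended application; the mapping property $\Psi_f\colon\damageprocesses\to\integrablefunctions^\infty(\timedomain;\integrablefunctions^\infty(\spatialdomain))$ then follows from the measurability and boundedness arguments of Lemma \ref{lem:nem-op_well-posed}.) Using $\damageprocess+h\in\damageprocesses$, I would write
\begin{equation*}
  \Psi_f(\damageprocess + h) - \Psi_f(\damageprocess) = \underbrace{\damageprocess(\cdot,\cdot,f(\damageprocess+h)) - \damageprocess(\cdot,\cdot,f(\damageprocess))}_{=:\,(I)} + \underbrace{h(\cdot,\cdot,f(\damageprocess+h))}_{=:\,(II)}.
\end{equation*}

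The first term is exactly $\nemytskiioperator(f(\damageprocess+h)) - \nemytskiioperator(f(\damageprocess))$ for the Nemytskii operator $\nemytskiioperator$ generated by $\damageprocess$, so I would combine the Fréchet-differentiability of $\nemytskiioperator$ from Lemma \ref{lem:nem-op_differentiable} with that of $f$ (valid since $f\in\continuousfunctions^{1,1}$, hence locally Lipschitz). As $\|f(\damageprocess+h)-f(\damageprocess)\| = O(\|h\|)$, the chain rule gives
\begin{equation*}
  (I) = \partial\nemytskiioperator(f(\damageprocess))\big(f(\damageprocess+h) - f(\damageprocess)\big) + o(\|h\|) = \partial\nemytskiioperator(f(\damageprocess))\,\partial f(\damageprocess)h + o(\|h\|),
\end{equation*}
where boundedness of $\partial\nemytskiioperator(f(\damageprocess))$ absorbs the $o(\|h\|)$ coming from expanding $f$. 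For the second term I would set $(II)=H(f(\damageprocess+h))$ with $H$ the Nemytskii operator generated by $h$, and add and subtract $H(f(\damageprocess))$. The leading piece $H(f(\damageprocess))$ is precisely the second summand of \eqref{eq:auxiliary_function_derivative} and is linear in $h$, while the correction is controlled by the a.e.\ Lipschitz continuity of $h$ in its last argument (itself bounded by $\|h\|_{\integrablefunctions^\infty(\timedomain;\integrablefunctions^\infty(\spatialdomain;\continuousfunctions^{1,1}(\overline{Y})))}$) against the first-order displacement $f(\damageprocess+h)-f(\damageprocess)=O(\|h\|)$:
\begin{equation*}
  \|H(f(\damageprocess+h)) - H(f(\damageprocess))\|_{\integrablefunctions^\infty(\timedomain;\integrablefunctions^\infty(\spatialdomain))} \le \|h\|\,\|f(\damageprocess+h) - f(\damageprocess)\|_{\integrablefunctions^\infty(\timedomain;\integrablefunctions^\infty(\spatialdomain))^{\dimension^2}} = O(\|h\|^2).
\end{equation*}
Thus $(II)=H(f(\damageprocess)) + o(\|h\|)$, and summing $(I)$ and $(II)$ yields \eqref{eq:auxiliary_function_derivative}.

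I would finish by verifying that the candidate map $h \mapsto \partial\nemytskiioperator(f(\damageprocess))\,\partial f(\damageprocess)h + H(f(\damageprocess))$ is linear and bounded — the first summand as a composition of bounded linear operators, the second because the Nemytskii operator depends linearly on its generator with $\|H(f(\damageprocess))\|\le\|h\|$ — so that it is genuinely the Fréchet derivative. The main obstacle I anticipate is the bookkeeping in $(II)$: the perturbation $h$ is evaluated at the \emph{perturbed} point $f(\damageprocess+h)$, and one must argue that replacing it by $f(\damageprocess)$ costs only a second-order error. This is exactly where the two sources of variation interact, and it is crucial that $h$ supplies its Lipschitz constant (available because $\damageprocess, h\in\continuousfunctions^{1,1}(\overline{Y})$ in the last variable) to pair with the $O(\|h\|)$ displacement. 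The $\continuousfunctions^{1,1}$-regularity is precisely what renders both the remainder in $(I)$ (through Lemma \ref{lem:nem-op_differentiable}) and the cross term in $(II)$ quadratic rather than merely sublinear.
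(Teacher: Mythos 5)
Your proposal is correct and follows essentially the same route as the paper: both split the increment into the part where $\damageprocess$ varies as the evaluated function (yielding $H(f(\damageprocess))$ plus a quadratic correction from evaluating $h$ at the perturbed point) and the part where it varies through the evaluation point $f(\damageprocess)$ (handled by the chain rule via Lemma \ref{lem:nem-op_differentiable} and the differentiability of $f$). The paper's proof is merely a terser version of the same expansion, with your Lipschitz bound on $H(f(\damageprocess+h))-H(f(\damageprocess))$ appearing there as the explicitly written second-order term $\partial H(f(\damageprocess))\partial f(\damageprocess)h$ absorbed into the remainder.
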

\begin{proof}
  We subtract $\Psi_f(\damageprocess)$ from $\Psi_f(\damageprocess + \perturbation)$. Adding a zero, employing the chain rule, the assumptions on $f$ as well as lemmas \ref{lem:nem-op_well-posed} and \ref{lem:nem-op_differentiable} shows
  \begin{multline}\label{eq:auxiliary_function_is_differentiable}
    \Psi_f(\damageprocess + \perturbation) - \Psi_f(\damageprocess) \\
     =\partial\nemytskiioperator(f(\damageprocess))\partial f(\damageprocess)\perturbation + H(f(\damageprocess)) + \partial H(f(\damageprocess))\partial(f(\damageprocess))\perturbation + o(\|\perturbation\|_{\damageprocesses})\text{ in }\integrablefunctions^{\infty}(\timedomain;\integrablefunctions^{\infty}(\spatialdomain))^{\dimension^2}
  \end{multline}
  and completes the proof.
\end{proof}
We collect previous results and focus on showing differentiability for the parameter-to-state map.
\begin{Theorem}\label{theo:derivative_of_pts_map}
  Provided that the assumptions made in Corollary \ref{cor:Operators_decoupled} hold, the parameter-to-state map $\ptsmap$ is differentiable with
  \begin{equation}\label{eq:ptsmap_derivative}
    \begin{array}{rl}
      \partial\ptsmap\colon\damageprocesses & \to\linearoperators(\integrablefunctions^{\infty}(\timedomain;\integrablefunctions^{\infty}(\spatialdomain;\continuousfunctions^{1,1}(\overline{Y})));\integrablefunctions^{\infty}(\timedomain;\spatialfunctions)\times\damagefunctions) \\
      \damageprocess & \mapsto (\partial O_E(\ptsmap_1(\damageprocess))\identity, \identity)\circ\partial O_D(\Psi_{\mollifiedgradient\ptsmap_1(\damageprocess)}(\damageprocess))\partial\Psi_{\mollifiedgradient\ptsmap_1(\damageprocess)}(\damageprocess).
    \end{array}
  \end{equation}
  For every perturbation $\perturbation\in\integrablefunctions^{\infty}(\timedomain;\integrablefunctions^{\infty}(\spatialdomain;\continuousfunctions^{1,1}(\overline{Y})))$ of an admissible damage process $\damageprocess\in\damageprocesses$ satisfying $\damageprocess+\perturbation\in\damageprocesses$ the derivative $\partial\ptsmap(\damageprocess)\perturbation$ uniquely solves the linear system
  \begin{subequations}\label{eq:lin_prob}
    \begin{equation}\label{eq:lin_prob_EqOfMo}
      \mathcal{\elasticityoperator}(\ptsmap_2(\damageprocess))(\partial\ptsmap_1(\damageprocess)\perturbation) + \mathcal{\elasticityoperator}(1+(\partial\ptsmap_2(\damageprocess)\perturbation))(\ptsmap_1(\damageprocess))
        = 0 \text{ in } \integrablefunctions^\infty(\timedomain;\spatialfunctions^*),
    \end{equation}
    coupled with
    \begin{multline}\label{eq:lin_prob_DaEvEq}
        -(\partial\ptsmap_2(\damageprocess)\perturbation)'  + \alpha\big(1-\ptsmap_2(\damageprocess)\big)^{-(\alpha+1)} \nemytskiioperator(\mollifiedgradient\ptsmap_1(\damageprocess))(\partial\ptsmap_2(\damageprocess)\perturbation)  \\
          + \big(1-\ptsmap_2(\damageprocess)\big)^{-\alpha}\partial\nemytskiioperator\big(\mollifiedgradient\ptsmap_1(\damageprocess)\big)\mollifiedgradient(\partial\ptsmap_1(\damageprocess)\perturbation) \\
        = -\Big(1-\ptsmap_2(\damageprocess)\Big)^{-\alpha}H\Big(\mollifiedgradient\ptsmap_1(\damageprocess)\Big)\text{ in }\integrablefunctions^\infty\big(\timedomain; \integrablefunctions^\infty(\spatialdomain)\big)
    \end{multline}
    and initial condition
    \begin{equation}\label{eq:lin_prob_initial_value}
        \Big(\partial\ptsmap_2(\damageprocess)\perturbation\Big)(0) = 0\text{ in } \integrablefunctions^\infty(\spatialdomain),
    \end{equation}
  \end{subequations}
  where $\ptsmap_i:=\pi_i\circ\ptsmap$.
\end{Theorem}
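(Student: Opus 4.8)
The plan is to reduce the statement to the fixed-point construction behind Theorem \ref{theo:direct_problem_well-posed} and to differentiate through it. Writing $\ptsmap_i = \pi_i\circ\ptsmap$, the two components are characterised by the coupling identities $\ptsmap_1(\damageprocess) = O_E(\ptsmap_2(\damageprocess))$ and $\ptsmap_2(\damageprocess) = O_D(\Psi_{\mollifiedgradient\ptsmap_1(\damageprocess)}(\damageprocess))$, the second holding because $\Psi_{\mollifiedgradient\ptsmap_1(\damageprocess)}(\damageprocess) = \nemytskiioperator(\mollifiedgradient\ptsmap_1(\damageprocess))$ is exactly the right-hand side of the damage evolution fed into $O_D$. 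All constituents are differentiable by Lemmas \ref{lem:nem-op_differentiable}, \ref{lem:O_E_is_differentiable}, \ref{lem:O_D_is_differentiable} and \ref{lem:auxiliary_function_is_differentiable}, so I would first apply the chain rule to these two identities.

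Doing so yields $\partial\ptsmap_1(\damageprocess)\perturbation = \partial O_E(\ptsmap_2(\damageprocess))(\partial\ptsmap_2(\damageprocess)\perturbation)$ and, via Lemma \ref{lem:auxiliary_function_is_differentiable},
\[
  \partial\ptsmap_2(\damageprocess)\perturbation = \partial O_D(\Psi_{\mollifiedgradient\ptsmap_1(\damageprocess)}(\damageprocess))\Big(\partial\nemytskiioperator(\mollifiedgradient\ptsmap_1(\damageprocess))\,\mollifiedgradient\,\partial\ptsmap_1(\damageprocess)\perturbation + H(\mollifiedgradient\ptsmap_1(\damageprocess))\Big),
\]
with $H$ the Nemytskii operator generated by $\perturbation$. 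Since the right-hand side again contains $\partial\ptsmap_1(\damageprocess)\perturbation = \partial O_E(\ptsmap_2)\,\partial\ptsmap_2(\damageprocess)\perturbation$, the quantity $\partial\ptsmap_2(\damageprocess)\perturbation$ is not a plain composition but solves the linear fixed-point equation $(\identity-\mathcal{K})\big(\partial\ptsmap_2(\damageprocess)\perturbation\big) = \partial O_D(\Psi_{\mollifiedgradient\ptsmap_1})\,H(\mollifiedgradient\ptsmap_1)$, where $\mathcal{K} := \partial O_D(\Psi_{\mollifiedgradient\ptsmap_1})\,\partial\nemytskiioperator(\mollifiedgradient\ptsmap_1)\,\mollifiedgradient\,\partial O_E(\ptsmap_2)$. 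Substituting the explicit characterisations of $\partial O_E$ and $\partial O_D$ from Lemmas \ref{lem:O_E_is_differentiable} and \ref{lem:O_D_is_differentiable} turns this into precisely the coupled linear system \eqref{eq:lin_prob}, so that establishing unique solvability of \eqref{eq:lin_prob} is the same as inverting $\identity-\mathcal{K}$.

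For the solvability I would observe that $\mathcal{K}$ is exactly the Fréchet derivative in the damage argument of the contraction $\fixedpointoperator = O_D\circ\nemytskiioperator\circ\mollifiedgradient\circ O_E$ from the proof of Theorem \ref{theo:direct_problem_well-posed}, evaluated at the fixed point $\ptsmap_2(\damageprocess)$. Consequently $\mathcal{K}^2$ is the derivative of $\fixedpointoperator^2$ at the fixed point, and its operator norm in the Biliecki-weighted norm $\|\cdot\|_{\bilieckiconstant}$ of \eqref{eq:Biliecki_norm} is bounded by the Lipschitz constant of $\fixedpointoperator^2$, namely $\|\mathcal{K}^2\|_{\bilieckiconstant} \le c(\bilieckiconstant^{-1}+\bilieckiconstant^{-2}) < 1$ for $\bilieckiconstant$ large (here Lemma \ref{lem:g_is_Lipschitz} supplies the $\integrablefunctions^\infty$-control of the mollified gradient, and the remaining bounds are the Lipschitz estimates of $\partial O_E$, $\partial\nemytskiioperator$ and $\partial O_D$). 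Then $\identity-\mathcal{K}$ is boundedly invertible through its Neumann series, so \eqref{eq:lin_prob} has a unique solution depending linearly and boundedly on $\perturbation$; this defines the candidate operator $\partial\ptsmap(\damageprocess)$ and confirms the formula \eqref{eq:ptsmap_derivative} read as this fixed-point identity.

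The step I expect to be the main obstacle is verifying that this candidate really is the Fréchet derivative, since $\ptsmap$ is defined only implicitly and the chain rule cannot be invoked as a black box. Here I would form the remainder $\ptsmap(\damageprocess+\perturbation)-\ptsmap(\damageprocess)-\partial\ptsmap(\damageprocess)\perturbation$, insert the first-order expansions with their quadratic and $o(\|\perturbation\|)$ remainders from Lemmas \ref{lem:nem-op_differentiable}--\ref{lem:auxiliary_function_is_differentiable}, and use the global Lipschitz dependence of $\ptsmap$ on $\damageprocess$ from Theorem \ref{theo:direct_problem_well-posed} to control $\|\ptsmap(\damageprocess+\perturbation)-\ptsmap(\damageprocess)\|$ by $\|\perturbation\|$. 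The remainder then obeys a linear damage-evolution inequality of the type \eqref{eq:lin_prob_DaEvEq} with an $o(\|\perturbation\|)$ forcing; integrating in time, controlling the mollified gradient via Lemma \ref{lem:g_is_Lipschitz}, and closing with Gronwall's lemma in the $\|\cdot\|_{\bilieckiconstant}$ norm yields $\|\ptsmap(\damageprocess+\perturbation)-\ptsmap(\damageprocess)-\partial\ptsmap(\damageprocess)\perturbation\| = o(\|\perturbation\|)$, which completes the proof.
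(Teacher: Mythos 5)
Your proposal is correct and follows essentially the same route as the paper: both reduce the coupled linear system to a solvable Volterra-type problem for the damage perturbation using Lemmas \ref{lem:nem-op_differentiable}, \ref{lem:O_E_is_differentiable}, \ref{lem:O_D_is_differentiable} and \ref{lem:auxiliary_function_is_differentiable}, with your Neumann-series inversion of $\identity-\mathcal{K}$ being just the linear-equation form of the paper's re-run of the Banach fixed-point argument from Proposition \ref{prop:DaEvEq_well-posed} in the weighted norm \eqref{eq:Biliecki_norm}. Your explicit remainder-plus-Gronwall verification that the candidate really is the Fr\'echet derivative is in fact more detailed than the paper's proof, which disposes of that step with a one-line appeal to the chain rule.
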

\begin{proof}
  Like in previous lemmas, we have to show existence of the proposed linear operator $\partial\ptsmap(\damageprocesses)$ satisfying
  \begin{equation}\label{eq:derivative_of_pts_map_def}
    \|\ptsmap(\damageprocess + \perturbation) - \ptsmap(\damageprocess) - \partial\ptsmap(\damageprocess)\perturbation\|_{\integrablefunctions^{\infty}(\timedomain;\spatialfunctions)\times\damagefunctions} = o(\|\perturbation\|_{\damageprocesses})
  \end{equation}
  for every $\damageprocess + \perturbation\in\damageprocesses$.
  At first, we prove unique solvability of the presented characterization.
  In Lemma \ref{lem:O_E_is_differentiable} we established differentiability of $O_E$ and particularly that for every $\damage_\perturbation\in\sobolevfunctions^{1,\infty}(\timedomain;\integrablefunctions^{\infty}(\spatialdomain))$ exists a unique solution to \eqref{eq:lin_prob_EqOfMo}, i.e.,
  \begin{equation}\label{eq:derivative_of_pts_map_1}
    \partial O_E(\ptsmap_2(\damageprocess))\damage_\perturbation = O_E^{\forces(\damage_\perturbation)}(\ptsmap_2(\damageprocess))
  \end{equation}
  with $\forces(\damage_\perturbation):=-\mathcal{\elasticityoperator}(1+\damage_\perturbation)\ptsmap_1(\damageprocess)$, cf. to \eqref{eq:O_E_derivative_character_1}. Note that
  \begin{equation}\label{eq:derivative_of_pts_map_ident}
    \ptsmap_1(\damageprocess)=O_E(\ptsmap_2(\damageprocess))
  \end{equation}
  holds, that allows us to rewrite \eqref{eq:lin_prob_DaEvEq} into
  \begin{multline}\label{eq:derivative_pts_map_ode}
    -\damage_\perturbation'  + \alpha\big(1-\ptsmap_2(\damageprocess)\big)^{-(\alpha+1)} \nemytskiioperator(\mollifiedgradient\ptsmap_1(\damageprocess))\damage_\perturbation\\
    + \big(1-\ptsmap_2(\damageprocess)\big)^{-\alpha}\partial\nemytskiioperator\big(\mollifiedgradient\ptsmap_1(\damageprocess)\big)\mollifiedgradient\partial O_E(\ptsmap_2(\damageprocess))\damage_\perturbation \\
        = -\Big(1-\ptsmap_2(\damageprocess)\Big)^{-\alpha}H\Big(\mollifiedgradient\ptsmap_1(\damageprocess)\Big)\text{ in }\integrablefunctions^\infty\big(\timedomain; \integrablefunctions^\infty(\spatialdomain)\big).
  \end{multline}
  Arguing analogously to the proof of Proposition \ref{prop:DaEvEq_well-posed} and Lemma \ref{lem:O_D_is_differentiable} guarantees existence of unique solutions $\damage_\perturbation$ and, hence, $(O_E^{\forces(\damage_\perturbation)}(\ptsmap_2(\damageprocess)),\damage_\perturbation)$ to \eqref{eq:derivative_pts_map_ode} and the system in question, respectively.
  To show $O_{\spatialvariables}\in\continuousfunctions^{0,1}(\continuousfunctions(\overline{\timedomain});\continuousfunctions(\overline{\timedomain}))$ for an operator defined accordingly to Proposition \ref{prop:DaEvEq_well-posed}, we employ the results from Lemma \ref{lem:nem-op_well-posed}, \ref{lem:nem-op_differentiable}, and \ref{lem:O_E_is_differentiable}, as well as the bounds given in equations \eqref{eq:damage_functions} and \eqref{eq:damage_sources}. \\[2ex]
  The fact that $\perturbation\mapsto(\partial O_E(\ptsmap_2(\damageprocess))\damage_{\perturbation},\damage_{\perturbation})$ actually is the derivative of $\ptsmap$, satisfies the identity \eqref{eq:ptsmap_derivative}, and thus is characterized via the linear system in \eqref{eq:lin_prob} follows from applying the chain rule and lemmas \ref{lem:nem-op_differentiable}, \ref{lem:O_E_is_differentiable}, \ref{lem:O_D_is_differentiable}, \ref{lem:auxiliary_function_is_differentiable}.
\end{proof}
It is apparent that differentiability of $\ptsmap$ transfers directly to the forward operator $\forwardoperator=\embedding\circ\projection_1\circ\ptsmap$, where $\embedding\colon\integrablefunctions^{\infty}(\timedomain;\spatialfunctions)\to\integrablefunctions^2(\timedomain\times\spatialdomain)^{\dimension}$ represents the respective embedding to interpret $\integrablefunctions^{\infty}(\timedomain;\spatialfunctions)$ as a subset of $\integrablefunctions^2(\timedomain\times\spatialdomain)^{\dimension}$ (compare to \ref{def:forward_operator}).
\begin{Corollary}\label{cor:forward_operator_differentiable}
  The forward operator $\forwardoperator\colon\damageprocesses\to\integrablefunctions^2(\timedomain\times\spatialdomain)^{\dimension}$ is differentiable, i.e.,
  \begin{equation}\label{eq:forward_operator_derivative}
    \begin{array}{rl}
      \partial\forwardoperator\colon\damageprocesses & \to\linearoperators(\integrablefunctions^{\infty}(\timedomain;\integrablefunctions^{\infty}(\spatialdomain;\continuousfunctions^{1,1}(\overline{Y})));\integrablefunctions^2(\timedomain\times\spatialdomain)^{\dimension}) \\
      \damageprocess & \mapsto \partial O_E(\ptsmap_1(\damageprocess))\partial O_D(\Psi_{\mollifiedgradient\ptsmap_1(\damageprocess)}(\damageprocess))\partial\Psi_{\mollifiedgradient\ptsmap_1(\damageprocess)}(\damageprocess),
    \end{array}
  \end{equation}
  and we refer to $\partial\forwardoperator(\damageprocess)$ as the \emph{linearized forward operator in} $\damageprocess$.
\end{Corollary}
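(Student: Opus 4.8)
The plan is to derive the statement directly from Theorem~\ref{theo:derivative_of_pts_map} by the chain rule for Fréchet derivatives, so the proof should be short. Recall the factorisation $\forwardoperator = \embedding\circ\projection_1\circ\ptsmap$, where $\ptsmap\colon\damageprocesses\to\integrablefunctions^\infty(\timedomain;\spatialfunctions)\times\damagefunctions$ is the parameter-to-state map, $\projection_1$ the projection onto the first factor, and $\embedding\colon\integrablefunctions^\infty(\timedomain;\spatialfunctions)\to\integrablefunctions^2(\timedomain\times\spatialdomain)^{\dimension}$ the continuous embedding. First I would record that both $\projection_1$ and $\embedding$ are bounded linear, hence Fréchet-differentiable at every point with derivative equal to themselves. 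Since $\ptsmap$ is differentiable by Theorem~\ref{theo:derivative_of_pts_map}, the chain rule then yields that $\forwardoperator$ is differentiable with
\begin{equation*}
  \partial\forwardoperator(\damageprocess) = \embedding\circ\projection_1\circ\partial\ptsmap(\damageprocess).
\end{equation*}

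The single point worth checking is the change of codomain from $\integrablefunctions^\infty(\timedomain;\spatialfunctions)$ to $\integrablefunctions^2(\timedomain\times\spatialdomain)^{\dimension}$. This is harmless: if $r = o(\|\perturbation\|_{\damageprocesses})$ denotes the first-order remainder of $\ptsmap$, then $\|\embedding\,\projection_1 r\|_{\integrablefunctions^2}\le\|\embedding\|\,\|\projection_1 r\|_{\integrablefunctions^\infty(\timedomain;\spatialfunctions)}$, so the remainder of $\forwardoperator$ is again $o(\|\perturbation\|_{\damageprocesses})$ measured in the target norm. Thus composing with the bounded operators $\projection_1$ and $\embedding$ preserves differentiability and does not change the order of the remainder.

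It then remains to read off the explicit expression. Projecting $\partial\ptsmap(\damageprocess)$ from Theorem~\ref{theo:derivative_of_pts_map} onto its first (displacement) component drops the trailing identity slot in the pair $(\partial O_E(\cdots)\identity,\identity)$ and leaves $\partial O_E(\ptsmap_1(\damageprocess))\,\partial O_D(\Psi_{\mollifiedgradient\ptsmap_1(\damageprocess)}(\damageprocess))\,\partial\Psi_{\mollifiedgradient\ptsmap_1(\damageprocess)}(\damageprocess)$; since $\embedding$ merely re-interprets this output in $\integrablefunctions^2(\timedomain\times\spatialdomain)^{\dimension}$ without altering its form, this is precisely the asserted formula for $\partial\forwardoperator(\damageprocess)$. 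I do not expect any genuine obstacle here, as the substantive work was already done in Theorem~\ref{theo:derivative_of_pts_map}; the only task is to apply the chain rule cleanly and to confirm that the two bounded linear factors behave as expected.
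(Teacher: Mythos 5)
Your proposal is correct and coincides with the paper's own (very brief) justification: the paper simply notes that $\forwardoperator=\embedding\circ\projection_1\circ\ptsmap$ is a composition of the differentiable map $\ptsmap$ from Theorem~\ref{theo:derivative_of_pts_map} with bounded linear operators, so differentiability and the stated formula follow by the chain rule. Your additional check that the remainder stays $o(\|\perturbation\|_{\damageprocesses})$ after composing with $\projection_1$ and $\embedding$ is a welcome (if routine) elaboration of the same argument.
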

\subsection{Characterization of Adjoint}
Results on iterative regularization methods are classically obtained for situations where parameter and data set are subsets of Hilbert spaces (see \cite{KaNeSche08}) or, a bit more generalized, reflexive Banach spaces (see \cite{SchuKaHoKa12}).
Since $\damageprocesses$ is neither a Hilbert space nor reflexive one commonly considers spaces that embed compactly into reflexive Banach spaces or Hilbert spaces, respectively.
Despite the fact that it is preferable to assume as little smoothness as possible from the parameters that one wishes to identify, we will focus on $\qisobolevfunctions^s(\timedomain\times\spatialdomain\times Y)$ henceforth, since this results in a problem already difficult enough to be treated analytically (see \cite{HuSheNeuSche18} e.g.). \\[2ex]
Considering Sobolev's embedding theorem (see e.g. \cite{AdFu03}) shows that $\qisobolevfunctions^s(\timedomain\times\spatialdomain\times Y)$ embeds compactly into $\integrablefunctions^{\infty}(\timedomain\times\spatialdomain\times Y)$ for $s>N/2$.
This allows for the forward operator $\forwardoperator$ to be interpreted as an operator
\begin{equation}\label{eq:def_ptsmap_in_Hilbert_space_setting}
  \forwardoperator\colon\damageprocesses_s:=\damageprocesses\cap\qisobolevfunctions^s(\timedomain\times\spatialdomain\times Y)\to \integrablefunctions^2(\timedomain\times\spatialdomain)^{\dimension}
\end{equation}
for some $s>(N^2+N+3)/2$, since also $H^s(Y)\hookrightarrow\hookrightarrow\continuousfunctions^{1,1}(\overline{Y})$ has to hold. Because of $\damageprocesses_s\subset\damageprocesses$ all results regarding differentiability remain valid in this setting, too.
In the same spirit, we introduce
\begin{equation}\label{eq:def_damage_functions_Hilbert_space}
  \damagefunctions_s:=\damagefunctions\cap\qisobolevfunctions^s(\timedomain\times\spatialdomain)
\end{equation}
as well as elevate regularity in time in \eqref{eq:data_pts_map} for $\forces$ and $\stressforces$ to $\qisobolevfunctions^s(\timedomain;\spatialfunctions^*)$ and $\qisobolevfunctions^s(\timedomain;\boundaryfunctions^*)$, respectively.
Higher regularity in time for $\forces$, $\stressforces$ ensures that $\damage\in\damagefunctions_s$ actually holds, since $\mollifier$ only mollifies in space, thus we would only get $\nemytskiioperator(\mollifiedgradient(\ptsmap(\damageprocess)))\in\integrablefunctions^\infty(\timedomain;\qisobolevfunctions^s(\spatialdomain))$ and $\damage\in\sobolevfunctions^{1,\infty}(\timedomain;\qisobolevfunctions^s(\spatialdomain))$.%
We follow the same strategy as before when linearizing the paramter-to-state map and characterize the adjoint via differential equations they are connected to.
For any $A\in\linearoperators(X;Y)$ between two Hilbert spaces $X$,$Y$, we denote by $A^*$ the uniquely determined \emph{adjoint} via $(x,A^*)_X = (Ax,y)_Y$ for all $x\in X, y\in Y$.
We also introduce the linear operator $E_s\colon\integrablefunctions^1(\timedomain\times\spatialdomain)\to\qisobolevfunctions^s(\timedomain\times\spatialdomain)$ via
\begin{equation}\label{eq:defining_E_s}
  \langle E_s\displacements,\boldsymbol{v}\rangle_{\qisobolevfunctions^s(\timedomain\times\spatialdomain)} = \langle\displacements,\boldsymbol{v}\rangle_{\integrablefunctions^s(\timedomain\times\spatialdomain)},
\end{equation}
which is well-defined due to Sobolev's embedding theorem and Lax-Milgram's lemma.
We start characterizing the adjoint for the linearized equation of motion in Lemma \ref{lemma:adjoint_of_linEqOfMo} and look at the linearized forward operator in Theorem \ref{theo:adjoint_of_lin_forward_operator}.
\begin{Lemma}\label{lemma:adjoint_of_linEqOfMo}
  \Cref{lem:O_E_is_differentiable} introduces a well-defined linear operator
  \begin{equation}
    \partial O_E(\ptsmap_2(\damageprocess))\colon\qisobolevfunctions^s(\timedomain\times\spatialdomain) \to \integrablefunctions^2(\timedomain;\spatialfunctions).
  \end{equation}
Its adjoint can then be characterized via
\begin{equation}\label{eq:adjoint_of_linEqOfMo}
  \begin{array}{r@{\ }l}
    \partial O_E(\ptsmap_2(\damageprocess))^*\colon &\integrablefunctions^2(\timedomain;\spatialfunctions)\to \qisobolevfunctions^s(\timedomain\times\spatialdomain)\\
    \forces &\mapsto E_s\big(\elasticitytensor\strains(\ptsmap_1(\damageprocess))\colon\strains(\displacements_{\forces})\big)
  \end{array}
\end{equation}
where $\displacements_{\forces}\in\integrablefunctions^2(\timedomain;\spatialfunctions)$ solves the adjoint problem
\begin{equation}\label{eq:adjoint_eq_of_lin_EqOfMo}
  \langle\mathcal{\elasticityoperator}(\ptsmap_2(\damageprocess))\displacements_{\forces},\boldsymbol{v}\rangle_{\integrablefunctions^2(\timedomain;\spatialfunctions^{*}),\integrablefunctions^2(\timedomain;\spatialfunctions)} = \langle \forces,\boldsymbol{v}\rangle_{\integrablefunctions^2(\timedomain;\spatialfunctions)} \quad\forall\spatialtestfunctions\in\integrablefunctions^2(\timedomain;\spatialfunctions)
\end{equation}
\end{Lemma}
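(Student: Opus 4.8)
The plan is to verify the defining relation of the Hilbert adjoint,
\[
  \big(\partial O_E(\ptsmap_2(\damageprocess))\perturbation,\ \forces\big)_{\integrablefunctions^2(\timedomain;\spatialfunctions)}
  = \big(\perturbation,\ \partial O_E(\ptsmap_2(\damageprocess))^*\forces\big)_{\qisobolevfunctions^s(\timedomain\times\spatialdomain)},
\]
for all $\perturbation\in\qisobolevfunctions^s(\timedomain\times\spatialdomain)$ and $\forces\in\integrablefunctions^2(\timedomain;\spatialfunctions)$, with the right-hand side equal to the claimed formula; uniqueness of the adjoint then identifies it. Two preliminary facts are needed. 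First, the operator is bounded between the stated spaces: the characterization \eqref{eq:O_E_derivative_character} depends on $\perturbation$ only multiplicatively (no time derivative of $\perturbation$ occurs), so the map of \Cref{lem:O_E_is_differentiable} already makes sense for every $\perturbation\in\integrablefunctions^\infty(\timedomain;\integrablefunctions^\infty(\spatialdomain))$, and the Sobolev embedding $\qisobolevfunctions^s(\timedomain\times\spatialdomain)\hookrightarrow\integrablefunctions^\infty(\timedomain\times\spatialdomain)$ (valid under the standing assumption on $s$) together with $\integrablefunctions^\infty(\timedomain;\spatialfunctions)\hookrightarrow\integrablefunctions^2(\timedomain;\spatialfunctions)$ turns it into a bounded linear operator $\qisobolevfunctions^s(\timedomain\times\spatialdomain)\to\integrablefunctions^2(\timedomain;\spatialfunctions)$. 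Second, the adjoint problem \eqref{eq:adjoint_eq_of_lin_EqOfMo} is uniquely solvable: the form $(\displacements,\spatialtestfunctions)\mapsto\int_{\timedomain}\int_{\spatialdomain}(1-\ptsmap_2(\damageprocess))\elasticitytensor\strains(\displacements):\strains(\spatialtestfunctions)\dx\dt$ is symmetric, continuous and coercive on $\integrablefunctions^2(\timedomain;\spatialfunctions)$ exactly as in \Cref{prop:EqOfMo_well-posed}, while $\spatialtestfunctions\mapsto(\forces,\spatialtestfunctions)_{\integrablefunctions^2(\timedomain;\spatialfunctions)}$ is a bounded functional, so Lax--Milgram yields a unique $\displacements_{\forces}$.

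For the identity itself I abbreviate $\damage:=\ptsmap_2(\damageprocess)$, $\displacements_0:=\ptsmap_1(\damageprocess)=O_E(\damage)$ and $\boldsymbol{w}:=\partial O_E(\damage)\perturbation$. First I would test the adjoint problem \eqref{eq:adjoint_eq_of_lin_EqOfMo} with $\spatialtestfunctions=\boldsymbol{w}$ (admissible since $\boldsymbol{w}\in\integrablefunctions^2(\timedomain;\spatialfunctions)$), obtaining
\[
  \big(\boldsymbol{w},\forces\big)_{\integrablefunctions^2(\timedomain;\spatialfunctions)}
  = \int_{\timedomain}\int_{\spatialdomain}(1-\damage)\elasticitytensor\strains(\displacements_{\forces}):\strains(\boldsymbol{w})\dx\dt .
\]
Because the form is symmetric (owing to $\elasticitytensor_{ijkl}=\elasticitytensor_{klij}$) the two strain arguments may be interchanged. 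Next I invoke the weak form of \eqref{eq:O_E_derivative_character}, namely
\[
  \int_{\spatialdomain}(1-\damage(\timevariable))\elasticitytensor\strains(\boldsymbol{w}(\timevariable)):\strains(\spatialtestfunctions)\dx
  = \int_{\spatialdomain}\perturbation(\timevariable)\,\elasticitytensor\strains(\displacements_0(\timevariable)):\strains(\spatialtestfunctions)\dx
\]
for every $\spatialtestfunctions\in\spatialfunctions$ and almost every $\timevariable\in\timedomain$; testing it with $\spatialtestfunctions=\displacements_{\forces}(\timevariable)$ and integrating in time gives
\[
  \big(\boldsymbol{w},\forces\big)_{\integrablefunctions^2(\timedomain;\spatialfunctions)}
  = \int_{\timedomain}\int_{\spatialdomain}\perturbation\,\elasticitytensor\strains(\displacements_0):\strains(\displacements_{\forces})\dx\dt
  = \big(\perturbation,\ \elasticitytensor\strains(\displacements_0):\strains(\displacements_{\forces})\big)_{\integrablefunctions^2(\timedomain\times\spatialdomain)} .
\]
Finally, the defining relation \eqref{eq:defining_E_s} of $E_s$, applied to the scalar field $\elasticitytensor\strains(\displacements_0):\strains(\displacements_{\forces})$, rewrites this $\integrablefunctions^2$-product as the $\qisobolevfunctions^s$-product with $E_s\big(\elasticitytensor\strains(\displacements_0):\strains(\displacements_{\forces})\big)$, which is exactly the claimed expression for $\partial O_E(\damage)^*\forces$.

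Two points need care, and the second is where I expect the genuine effort to lie. The application of $E_s$ is legitimate only if its argument lies in $\integrablefunctions^1(\timedomain\times\spatialdomain)$; this holds because $\strains(\displacements_0),\strains(\displacements_{\forces})\in\integrablefunctions^2(\timedomain\times\spatialdomain)$ and $\elasticitytensor$ is essentially bounded, so Cauchy--Schwarz gives integrability of the product. The more delicate issue is the change of domain: \Cref{lem:O_E_is_differentiable} constructed $\partial O_E(\damage)$ on $\sobolevfunctions^{1,\infty}(\timedomain;\integrablefunctions^{\infty}(\spatialdomain))$, whereas here it must act on $\qisobolevfunctions^s(\timedomain\times\spatialdomain)$. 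I expect the main work to be checking that the purely algebraic characterization \eqref{eq:O_E_derivative_character} extends verbatim to this larger class of perturbations and that the restriction stays bounded into $\integrablefunctions^2(\timedomain;\spatialfunctions)$; once this is settled, the symmetry of $\elasticitytensor$ is the structural fact that reduces everything to moving the test function across the (symmetric) elastic bilinear form, making the remaining computation routine.
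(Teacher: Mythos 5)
Your argument is essentially identical to the paper's proof: test the adjoint problem \eqref{eq:adjoint_eq_of_lin_EqOfMo} with $\partial O_E(\ptsmap_2(\damageprocess))\perturbation$, use the symmetry of the elastic bilinear form together with the weak form of the characterization \eqref{eq:O_E_derivative_character} to shift the perturbation onto $\elasticitytensor\strains(\ptsmap_1(\damageprocess))\colon\strains(\displacements_{\forces})$, and then apply $E_s$ to pass from the $\integrablefunctions^2$- to the $\qisobolevfunctions^s$-inner product. You additionally spell out the preliminaries (boundedness of $\partial O_E$ on $\qisobolevfunctions^s(\timedomain\times\spatialdomain)$ via Sobolev embedding, Lax--Milgram for the adjoint problem, and $\integrablefunctions^1$-integrability of the argument of $E_s$) that the paper leaves implicit, which is a correct and welcome sharpening rather than a different route.
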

\begin{proof}
  Set $\boldsymbol{\spatialtestfunction} := \partial O_E(\ptsmap_2(\damageprocess))\damage$ in \eqref{eq:adjoint_eq_of_lin_EqOfMo} we immediately see that
  \begin{multline}\label{eq:adjoint_of_linEqOfMo_1}
    \langle\forces,\partial O_E(\ptsmap_2(\damageprocess))\damage\rangle_{\integrablefunctions^2(\timedomain;\spatialfunctions)}%
      = \langle\mathcal{\elasticityoperator}(\ptsmap_2(\damageprocess))\displacements_{\forces},\partial O_E(\ptsmap_2(\damageprocess))\damage\rangle_{\integrablefunctions^2(\timedomain;\spatialfunctions^*),\integrablefunctions^2(\timedomain;\spatialfunctions)} \\
      = \langle\mathcal{\elasticityoperator}(1-\damage)\ptsmap_1(\damageprocess),\displacements_{\forces}\rangle_{\integrablefunctions^2(\timedomain;\spatialfunctions^*),\integrablefunctions^2(\timedomain;\spatialfunctions)} %
      = \langle\damage,E_s(\elasticitytensor\strains(\ptsmap_1(\damageprocess)):\strains(\displacements_{\forces}))\rangle_{\qisobolevfunctions^s(\timedomain\times\spatialdomain)}
  \end{multline}
  where $E_s\colon\integrablefunctions^1(\timedomain\times\spatialdomain)\to\qisobolevfunctions^s(\timedomain\times\spatialdomain)$ is the operator from \eqref{eq:defining_E_s}.
\end{proof}
We collect the previous results to prove the main theorem on the adjoint of the linearized forward operator $\partial\forwardoperator(\damageprocess)$.
\begin{Theorem}\label{theo:adjoint_of_lin_forward_operator}
  The adjoint of the linearized forward operator $\partial\forwardoperator(\damageprocess)\colon\qisobolevfunctions^s(\timedomain\times\spatialdomain\times Y)\to\integrablefunctions^2(\timedomain\times\spatialdomain)^\dimension$ at $\damageprocess\in\damageprocesses_s$ is characterized via
  \begin{equation}\label{eq:adjoint_of_lin_forward_operator}
    \begin{array}{rl}
      \partial\forwardoperator(\damageprocess)^*\colon\integrablefunctions^2(\timedomain\times\spatialdomain)^\dimension &\to \qisobolevfunctions^s(\timedomain\times\spatialdomain\times Y) \\
                   \forces  &\mapsto \tilde\damageprocess_{\forces}
    \end{array}
  \end{equation}
  with $\tilde\damageprocess_{\forces}$ being the unique solution to
  \begin{equation}\label{eq:adjoint_of_lin_forward_operator_char}
    \Big\langle H\Big(\mollifiedgradient\ptsmap_1(\damageprocess)\Big),-\Big(1-\ptsmap_2(\damageprocess)\Big)^{-\alpha}w_e\Big\rangle_{\integrablefunctions^2(\timedomain\times\spatialdomain)}
    = \langle \perturbation, \tilde\damageprocess_{\forces}\rangle_{\qisobolevfunctions^s(\timedomain\times\spatialdomain\times Y)}
  \end{equation}
  and where $w_e\in\sobolevfunctions^{1,1}(\timedomain;\integrablefunctions^1(\spatialdomain))$ solves
  \begin{subequations}\label{eq:adjoint_linDaEvEq}
  \begin{multline}\label{eq:adjoint_linDaEvEq_1a}
    w_e'  +  \alpha\big(1-\ptsmap_2(\damageprocess)\big)^{-(\alpha + 1)} \nemytskiioperator(\mollifiedgradient\ptsmap_{\stressforces}^1(\damageprocess))w_e \\
-\partial O_E(\ptsmap_2(\damageprocess))^*(\divergence(\mollifier(\partial\nemytskiioperator\big(\mollifiedgradient\ptsmap_1(\damageprocess)\big)^{\transpose}\big(1-\ptsmap_{\stressforces}^2(\damageprocess)\big)^{-\alpha}w_e)))
      = e, \quad \text{in } \integrablefunctions^1(\timedomain\times\spatialdomain),  \end{multline}
  for $e=\elasticitytensor\strains(\ptsmap_1(\damageprocess)):\strains(\displacements_{\forces})$ with $\displacements_{\forces}$ from Lemma \ref{lemma:adjoint_of_linEqOfMo} and final value
  \begin{equation}\label{eq:adjoint_linDaEvEq_final_value}
    w_e(\timedomainmax) = 0 \quad \text{in } \integrablefunctions^1(\spatialdomain).
  \end{equation}
\end{subequations}
\end{Theorem}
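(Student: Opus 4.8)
The plan is to exploit the chain-rule factorisation of the linearised forward operator from \Cref{cor:forward_operator_differentiable}, namely $\partial\forwardoperator(\damageprocess)=\partial O_E(\ptsmap_2(\damageprocess))\,\partial O_D(\Psi_{\mollifiedgradient\ptsmap_1(\damageprocess)}(\damageprocess))\,\partial\Psi_{\mollifiedgradient\ptsmap_1(\damageprocess)}(\damageprocess)$, and to realise its Hilbert adjoint as the reverse composition of the three individual adjoints. First I would fix a residual $\forces\in\integrablefunctions^2(\timedomain\times\spatialdomain)^{\dimension}$ and an arbitrary admissible direction $\perturbation$, set $\damage_{\perturbation}:=\partial\ptsmap_2(\damageprocess)\perturbation$ so that $\partial\ptsmap_1(\damageprocess)\perturbation=\partial O_E(\ptsmap_2(\damageprocess))\damage_{\perturbation}$, and expand $\langle\partial O_E(\ptsmap_2(\damageprocess))\damage_{\perturbation},\forces\rangle_{\integrablefunctions^2}$. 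Applying \Cref{lemma:adjoint_of_linEqOfMo} to the outermost factor moves $\forces$ onto $\damage_{\perturbation}$ and introduces the driving term $e=\elasticitytensor\strains(\ptsmap_1(\damageprocess)):\strains(\displacements_{\forces})$, where $\displacements_{\forces}$ solves the adjoint equation of motion \eqref{eq:adjoint_eq_of_lin_EqOfMo}; invoking the defining relation \eqref{eq:defining_E_s} of $E_s$ then turns the $\qisobolevfunctions^s$-pairing into $\langle\partial O_E(\ptsmap_2(\damageprocess))\damage_{\perturbation},\forces\rangle_{\integrablefunctions^2}=\langle e,\damage_{\perturbation}\rangle_{\integrablefunctions^2}$.

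The core of the argument is the adjoint of the middle factor. I would read \eqref{eq:derivative_pts_map_ode} as $\Lambda\damage_{\perturbation}=\damageRHS$, where $\Lambda$ is the first-order-in-time linear operator assembled from the time derivative, the multiplicative coefficient $\alpha(1-\ptsmap_2(\damageprocess))^{-(\alpha+1)}\nemytskiioperator(\mollifiedgradient\ptsmap_1(\damageprocess))$ and the nonlocal spatial coupling $(1-\ptsmap_2(\damageprocess))^{-\alpha}\partial\nemytskiioperator(\mollifiedgradient\ptsmap_1(\damageprocess))\,\mollifiedgradient\,\partial O_E(\ptsmap_2(\damageprocess))$, the source is $\damageRHS:=-(1-\ptsmap_2(\damageprocess))^{-\alpha}H(\mollifiedgradient\ptsmap_1(\damageprocess))$, and the initial condition is $\damage_{\perturbation}(0)=0$. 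I then introduce the adjoint state $w_e$ as the solution of $\Lambda^{*}w_e=e$ and check, by integrating by parts in time and transposing the spatial coupling, that $w_e$ is exactly the solution of the backward system \eqref{eq:adjoint_linDaEvEq}: the time integration turns $-\mathrm{d}/\mathrm{d}t$ into $+\mathrm{d}/\mathrm{d}t$ and sends the condition $\damage_{\perturbation}(0)=0$ to the terminal condition \eqref{eq:adjoint_linDaEvEq_final_value}, the multiplicative coefficient is self-adjoint, while transposing the coupling produces the factor $\partial\nemytskiioperator(\mollifiedgradient\ptsmap_1(\damageprocess))^{\transpose}$, the adjoint of $\mollifiedgradient$ contributes $\divergence(\mollifier\,\lcdot)$, and \Cref{lemma:adjoint_of_linEqOfMo} is reused to supply $\partial O_E(\ptsmap_2(\damageprocess))^{*}$ --- precisely the operator in \eqref{eq:adjoint_linDaEvEq_1a}. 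Transposition then yields $\langle e,\damage_{\perturbation}\rangle_{\integrablefunctions^2}=\langle w_e,\damageRHS\rangle_{\integrablefunctions^2}=\langle w_e,-(1-\ptsmap_2(\damageprocess))^{-\alpha}H(\mollifiedgradient\ptsmap_1(\damageprocess))\rangle_{\integrablefunctions^2}$.

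Finally I would dispose of the third factor. Since $H$ is the Nemytskii operator generated by the direction $\perturbation$ (\Cref{lem:nem-op_differentiable}), the last pairing is linear and bounded in $\perturbation$; the Riesz representation theorem on the Hilbert space $\qisobolevfunctions^s(\timedomain\times\spatialdomain\times Y)$ then furnishes a unique $\tilde\damageprocess_{\forces}$ satisfying \eqref{eq:adjoint_of_lin_forward_operator_char}. Chaining the three identities gives $\langle\partial\forwardoperator(\damageprocess)\perturbation,\forces\rangle_{\integrablefunctions^2}=\langle\perturbation,\tilde\damageprocess_{\forces}\rangle_{\qisobolevfunctions^s}$ for every admissible $\perturbation$, which by the definition of the adjoint identifies $\partial\forwardoperator(\damageprocess)^{*}\forces=\tilde\damageprocess_{\forces}$ and proves the characterisation.

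The main obstacle I expect is the middle step: showing that the backward problem \eqref{eq:adjoint_linDaEvEq} is uniquely solvable in $\sobolevfunctions^{1,1}(\timedomain;\integrablefunctions^1(\spatialdomain))$ and that $\Lambda^{*}$ genuinely has the stated form. Two points require care. First, the adjoint of the mollified gradient must be computed explicitly; for the convolution mollifier of \Cref{rem:on_mollifiers} it is the mollified divergence $-\divergence(\mollifier\convolution\,\lcdot)$, and one must verify that no boundary contributions survive, so that the transposition through $\mollifiedgradient$ followed by $\partial O_E(\ptsmap_2(\damageprocess))^{*}$ is exact. Second, well-posedness of \eqref{eq:adjoint_linDaEvEq} should be obtained by reversing time and re-running the fixed-point and Gronwall argument of \Cref{prop:DaEvEq_well-posed}, now on the dual scale $\integrablefunctions^1$ in place of $\integrablefunctions^{\infty}$ and with the extra nonlocal term controlled by the bounds of \Cref{lemma:adjoint_of_linEqOfMo} and \Cref{lem:nem-op_differentiable}. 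It is precisely the regularity asymmetry between the $\qisobolevfunctions^s$/$\integrablefunctions^{\infty}$ primal states and the $\integrablefunctions^1$ adjoint states that makes the intervening duality pairings delicate to justify.
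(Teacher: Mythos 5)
Your proposal is correct and follows essentially the same route as the paper's proof: reduce to the adjoint of $\partial\ptsmap_2(\damageprocess)$ via Lemma \ref{lemma:adjoint_of_linEqOfMo}, transpose the linearized damage evolution \eqref{eq:derivative_pts_map_ode} by integrating by parts in time and dualizing the nonlocal coupling (producing the backward system \eqref{eq:adjoint_linDaEvEq} with terminal condition), establish its well-posedness by time reversal and the fixed-point/Gronwall argument of Proposition \ref{prop:DaEvEq_well-posed}, and conclude with a Riesz/Lax--Milgram representation on $\qisobolevfunctions^s(\timedomain\times\spatialdomain\times Y)$. The obstacles you flag (adjoint of the mollified gradient, well-posedness of the backward $\integrablefunctions^1$ problem) are exactly the points the paper handles.
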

\begin{proof}
We start by arguing existence and uniqueness of a solution to the adjoint problem \eqref{eq:adjoint_linDaEvEq}.
Looking at the definitions of the participating operators and previous results ensures that the coefficients of $w_e$ are at least $\integrablefunctions^\infty$ in time, thus \eqref{eq:adjoint_linDaEvEq_1a} formally makes sense.
A transformation in time via $s=\timedomainmax-\timevariable$ does not change this fact and applying the same reasoning as in the proof of Proposition \ref{prop:DaEvEq_well-posed} and Lemma \ref{lem:O_D_is_differentiable} then establishes our claim.\\[2ex]
Analog to Theorem \ref{theo:derivative_of_pts_map}, we focus on the damage evolution \eqref{eq:lin_prob_DaEvEq} in that we use the operator $\partial O_E(\forwardoperator(\damageprocess))$ to replace $\displacements_{\perturbation}$ through $\partial O_E(\forwardoperator_2(\damageprocess))\damage_{\perturbation}$, see \eqref{eq:derivative_pts_map_ode}.
Take $\displacements^{\noiselevel}\in\integrablefunctions^2(\timedomain\times\spatialdomain)^{\dimension}$.
By abuse of notation we will write $\langle u, v\rangle_{\integrablefunctions^2(\mathbb{R}^{\dimension})} = \int_{\mathbb{R}^{\dimension}} u v\,d\spatialvariables$ for integrablefunctions $uv$ even if, without loss of generality, $v$ does not belong to $\integrablefunctions^2(\mathbb{R}^{\dimension})$.
Because of
\begin{multline}\label{eq:adjoint_linDaEvEq_1}
  \langle\partial\forwardoperator(\damageprocess)\perturbation,\displacements^{\noiselevel}\rangle_{\integrablefunctions^2(\timedomain\times\spatialdomain)^{\dimension}}%
  = \langle\partial\ptsmap_1(\damageprocess)\perturbation,\displacements^{\noiselevel}\rangle_{\integrablefunctions^2(\timedomain\times\spatialdomain)^{\dimension}} %
  = \langle\partial O_E(\ptsmap_2(\damageprocess))\partial\ptsmap_2(\damageprocess)\perturbation,\displacements^{\noiselevel}\rangle_{\integrablefunctions^2(\timedomain\times\spatialdomain)^{\dimension}}\\
  = \langle\partial\ptsmap_2(\damageprocess)\perturbation,\partial O_E(\ptsmap_2(\damageprocess))^*\displacements_{\noiselevel}\rangle_{\qisobolevfunctions^s(\timedomain\times\spatialdomain)} %
  = \langle\partial\ptsmap_2(\damageprocess)\perturbation,\elasticitytensor\strains(\ptsmap_1(\damageprocess)):\strains(\boldsymbol{v}_{\displacements^{\noiselevel}})\rangle_{\integrablefunctions^2(\timedomain\times\spatialdomain)},
\end{multline}
where we took $\integrablefunctions^2(\timedomain;\spatialfunctions)$ as a subset of $\integrablefunctions^2(\timedomain\times\spatialdomain)^{\dimension}$, it is sufficient to characterize the adjoint of $\partial\ptsmap_2(\damageprocess)\colon\qisobolevfunctions^s(\timedomain\times\spatialdomain\times Y)\to\integrablefunctions^2(\timedomain\times\spatialdomain)$.
The first two equalities in \eqref{eq:adjoint_linDaEvEq_1} follow from definitions of $\forwardoperator$ and $\partial O_E(\ptsmap_2(\damageprocess))$ in Definition \ref{def:forward_operator} and Corollary \ref{cor:Operators_decoupled}, respectively, as well as Theorem \ref{theo:derivative_of_pts_map}.
By replacing $\integrablefunctions^2(\timedomain;\spatialfunctions)$ with $\integrablefunctions^2(\timedomain\times\spatialdomain)^{\dimension}$ and $\displacements_{\forces}$ through $\boldsymbol{v}_{\displacements^{\noiselevel}}$ in \eqref{eq:adjoint_eq_of_lin_EqOfMo}, we can argue validity of the last equality. \\[2ex]
Let us take $\damage_{\perturbation}:=\partial\ptsmap_2(\damageprocess)\perturbation$ and multiply \eqref{eq:derivative_pts_map_ode} with $w\in\sobolevfunctions^{1,1}(\timedomain;\integrablefunctions^1(\spatialdomain))$. We then integrate over $\timedomain\times\spatialdomain$ shows
\begin{multline}\label{eq:motivating_adjoint_ode}
  \int_0^{\timedomainmax}\Big\langle-\damage_\perturbation',w\Big\rangle_{\integrablefunctions^2(\spatialdomain)} %
  + \Big\langle\alpha\big(1-\ptsmap_2(\damageprocess)\big)^{-(\alpha+1)} \nemytskiioperator(\mollifiedgradient\ptsmap_1(\damageprocess))\damage_\perturbation,w\Big\rangle_{\integrablefunctions^2(\spatialdomain)}\\
    + \Big\langle\big(1-\ptsmap_2(\damageprocess)\big)^{-\alpha}\partial\nemytskiioperator\big(\mollifiedgradient\ptsmap_1(\damageprocess)\big)\mollifiedgradient\partial O_E(\ptsmap_2(\damageprocess))\damage_\perturbation,w\Big\rangle_{\integrablefunctions^2(\spatialdomain)}\,d\timevariable \\
        = -\Big\langle\Big(1-\ptsmap_2(\damageprocess)\Big)^{-\alpha}H\Big(\mollifiedgradient\ptsmap_1(\damageprocess)\Big),w\Big\rangle_{\integrablefunctions^2(\timedomain\times\spatialdomain)}.
\end{multline}
We use integration by parts to establish
\begin{equation}\label{eq:motivating_adjoint_ode_1st_term}
  \int_0^{\timedomainmax}\langle -\damage_{\perturbation}',w\rangle_{\integrablefunctions^2(\spatialdomain)}\,d\timevariable %
    = \int_0^{\timedomainmax}\langle \damage_{\perturbation}',w\rangle_{\integrablefunctions^2(\spatialdomain)}\,d\timevariable + \langle\damage_{\perturbation}(0),w(0)\rangle_{\integrablefunctions^2(\spatialdomain)} - \langle\damage_{\perturbation}(\timedomainmax),w(\timedomainmax)\rangle_{\integrablefunctions^2(\spatialdomain)}
\end{equation}
for the first term on the left-hand side. The second term obviously satisfies
\begin{equation}\label{eq:motivating_adjoint_ode_2nd_term}
  \Big\langle\alpha\big(1-\ptsmap_2(\damageprocess)\big)^{-(\alpha+1)} \nemytskiioperator(\mollifiedgradient\ptsmap_1(\damageprocess))\damage_\perturbation,w\Big\rangle_{\integrablefunctions^2(\timedomain\times\spatialdomain)}
  = \Big\langle\damage_\perturbation,\alpha\big(1-\ptsmap_2(\damageprocess)\big)^{-(\alpha+1)} \nemytskiioperator(\mollifiedgradient\ptsmap_1(\damageprocess))w\Big\rangle_{\integrablefunctions^2(\timedomain\times\spatialdomain)},
\end{equation}
and the third one we can recast into
\begin{multline}\label{eq:motivating_adjoint_ode_3rd_term}
  \Big\langle\big(1-\ptsmap_2(\damageprocess)\big)^{-\alpha}\partial\nemytskiioperator\big(\mollifiedgradient\ptsmap_1(\damageprocess)\big)\mollifiedgradient\partial O_E(\ptsmap_2(\damageprocess))\damage_\perturbation,w\Big\rangle_{\integrablefunctions^2(\timedomain\times\spatialdomain)} \\
    = \int_{\timedomain\times\spatialdomain}\mollifiedgradient\partial O_E(\ptsmap_2(\damageprocess))\damage_\perturbation:\partial_{\boldsymbol{y}}\damageprocess(\cdot,\mollifiedgradient\ptsmap_1(\damageprocess)(\cdot))\big(1-\ptsmap_2(\damageprocess)\big)^{-\alpha}w\,d(\timevariable,\spatialvariables) \qquad\qquad\\
    = -\langle\partial O_E(\ptsmap_2(\damageprocess))\damage_\perturbation,\divergence(\mollifier(\partial\nemytskiioperator(\mollifiedgradient\ptsmap_1(\damageprocess))^{\transpose}\big(1-\ptsmap_2(\damageprocess)\big)^{-\alpha}w))\rangle_{\integrablefunctions^2(\timedomain\times\spatialdomain)^{\dimension}}\\
    = \langle\damage_\perturbation,-\partial O_E(\ptsmap_2(\damageprocess))^*\divergence(\mollifier(\partial\nemytskiioperator(\mollifiedgradient\ptsmap_1(\damageprocess))^{\transpose}\big(1-\ptsmap_2(\damageprocess)\big)^{-\alpha}w))\rangle_{\integrablefunctions^2(\timedomain\times\spatialdomain)^{\dimension}}
\end{multline}
taking $\partial O_E(\ptsmap_2(\damageprocess)^*$ from Lemma \ref{lemma:adjoint_of_linEqOfMo}.
Let $w_e\in\sobolevfunctions^{1,1}(\timedomain;\integrablefunctions^1(\spatialdomain))$ be the unique solution to \eqref{eq:adjoint_linDaEvEq} with $e:=\elasticitytensor\strains(\ptsmap_1(\damageprocess)):\strains(\boldsymbol{v}_{\displacements^{\noiselevel}})\in\integrablefunctions^1(\timedomain\times\spatialdomain)$.
In considering \eqref{eq:lin_prob_initial_value} and \eqref{eq:adjoint_linDaEvEq_final_value}, too, we can argue the vanishing of $\langle\damage_{\perturbation}(0),w(0)\rangle_{\integrablefunctions^2(\spatialdomain)}$ and $\langle\damage_{\perturbation}(\timedomainmax),w(\timedomainmax)\rangle_{\integrablefunctions^2(\spatialdomain)}$ in \eqref{eq:motivating_adjoint_ode_1st_term} and use this to find
\begin{multline}\label{eq:characterize_adjoint}
  \langle\partial\forwardoperator(\damageprocess)\perturbation,\displacements^{\noiselevel}\rangle_{\integrablefunctions^2(\timedomain\times\spatialdomain)^{\dimension}} %
    = \langle\damage_{\perturbation},e\rangle_{\integrablefunctions^2(\timedomain\times\spatialdomain)} %
    = -\Big\langle H\Big(\mollifiedgradient\ptsmap_1(\damageprocess)\Big),\Big(1-\ptsmap_2(\damageprocess)\Big)^{-\alpha}w_e\Big\rangle_{\integrablefunctions^2(\timedomain\times\spatialdomain)}\\
    = \Big\langle H\Big(\mollifiedgradient\ptsmap_1(\damageprocess)\Big),-\Big(1-\ptsmap_2(\damageprocess)\Big)^{-\alpha}w_e\Big\rangle_{\integrablefunctions^2(\timedomain\times\spatialdomain)}
    = \langle \perturbation, \partial\forwardoperator(\damageprocess)^*\displacements^{\noiselevel}\rangle_{\qisobolevfunctions^s(\timedomain\times\spatialdomain\times Y)},
\end{multline}
where we collected the results from \eqref{eq:motivating_adjoint_ode} to \eqref{eq:motivating_adjoint_ode_3rd_term} as well as well-posedness from \eqref{eq:adjoint_linDaEvEq}, Theorem \ref{theo:derivative_of_pts_map}, and Lemma \ref{lemma:adjoint_of_linEqOfMo}.
It is easy to verify that the right-hand side is a linear functional on $\qisobolevfunctions^s(\timedomain\times\spatialdomain\times Y)$ so that we can employ Lax-Milgram's lemma once again in the last step to be able to characterize the adjoint which completes this proof.
\end{proof}

\section{Inverse Problem}\label{sec:Inverse_Problem}
In this section we address some questions regarding the inverse problem, which we will state in Definition \ref{def:inverse_problem}.
Namely this means ill-posedness to justify the need for regularization methods and, since we are faced with a nonlinear problem, we show a strong nonlinearity condition to ensure convergence for iterative regularization methods.
\begin{Definition}[Inverse Problem]\label{def:inverse_problem}
  Let $\displacements^\noiselevel\in\integrablefunctions^2(\timedomain\times\spatialdomain)^{\dimension}$ denote measurements of displacements satisfying
  \begin{equation}\label{eq:defining_noise_level}
    \|\displacements-\displacements^\noiselevel\|_{\integrablefunctions^2(\timedomain\times\spatialdomain)^{\dimension}} \le \noiselevel,
  \end{equation}
  where $\noiselevel\ge 0$ represents the noise level. With $\forwardoperator$ denoting the operator introduced in \Cref{def:forward_operator}, we search for a function $\damageprocess$ in $\damageprocesses_s=\domain(\forwardoperator)$ according to \eqref{eq:damage_sources} and \eqref{eq:def_ptsmap_in_Hilbert_space_setting} satisfying
\begin{equation}\label{eq:inverse_problem}
  \forwardoperator(\damageprocess) = \displacements^{\noiselevel}\text{ in }\integrablefunctions^2(\timedomain\times\spatialdomain)^{\dimension}.
\end{equation}
  Its linearization then is to find $\perturbation\in\damageprocesses_s$ such that
  \begin{equation}\label{eq:lin_inverse_problem}
    \partial\forwardoperator(\damageprocess)\perturbation = \displacements^{\noiselevel}\text{ in }\integrablefunctions^2(\timedomain\times\spatialdomain)^{\dimension}
  \end{equation}
  holds.
\end{Definition}
\subsection{Ill-posedness}\label{sec:Ill_Posedness_of_Nonlinear_Inverse_Problem}
Arguing ill-posedness of inverse problems is often done successfully by showing compactness of one of the otherwise continuous participating operators the forward operator can be dissected into and by employing embedding theorems.
In a linear setting like \eqref{eq:lin_inverse_problem} operators often exhibit a regularizing effect on the parameter as the solution tends to lie in a space that embeds compactly into the space connected to measurements (see e.g. \cite{JiMa12}).
A linear setting with compact operator then is (globally) ill-posed if and only if its range is of infinite dimension. In our setting, The solution does not embed compactly into the measurements.
We would need more smoothness in time to get a compact embedding $\mathcal{I}(\forwardoperator'(\damageprocess^{\dagger}))\subset\integrablefunctions^{\infty}(\timedomain;\spatialfunctions)$ into the data $\integrablefunctions^2(\timedomain\times\spatialdomain)^{\dimension}$, see \cite{Je-PiAu63} or, extending the previous result, \cite{Si86}.
Also note Remark \ref{rem:time_regularity} item \eqref{rem:time_regularity:on_right_hand_side}.
In the linear as well as the non-linear setting, we immediately see that the operators $\forwardoperator$ and $\partial\forwardoperator(\damageprocess)$, respectively, are compact, i.e., bounded sets are mapped into relatively compact subsets, which stems directly from the embedding $\damageprocesses_s\hookrightarrow\hookrightarrow\damageprocesses$ for suitable $s$.
This establishes the following result.
\begin{Remark}\label{rem:time_regularity}
  \begin{inparaenum}[(a)]
    \item\label{rem:time_regularity:on_right_hand_side}%
    As time is merely a parameter in the balances of forces and damage functions are already smooth enough, one could remedy the situation by assuming a touch more smoothness in time on the right-hand side in the balance equation, which would not be that much of a drawback. \\[2ex]
    \item\label{rem:time_regularity:connection_to_linearization}%
    Different concepts of ill-posedness are discussed in \cite{BeHo00} together with their inter-dependence as well as the connection between a nonlinear problem and its linearization.
  \end{inparaenum}
\end{Remark}
There are different concepts of ill-posedness found in the literature, see \cite{BeHo00, HoSche98}, for example, and note Remark \ref{rem:time_regularity} item \eqref{rem:time_regularity:connection_to_linearization}.
Since we primarily face a nonlinear inverse problem and ill-posedness becomes a local property then, we follow \cite{BeHo00} in his presentation.
\begin{Proposition}\label{prop:inverse_problem_ill-posed}
  The inverse problem from Defintition \ref{def:inverse_problem} is locally ill-posed in $\damageprocess\in\damageprocesses_s$, i.e., for any arbitrarily small $\radius > 0$ exists a sequence $(\damageprocess_n)_{n\in\mathbb{N}}\in\ball_{\damageprocesses_s}(\damageprocess;\radius)\subset\damageprocesses_s$ such that
  \begin{equation}\label{eq:locally_ill-posed}
    \forwardoperator(\damageprocess_n)\to\forwardoperator(\damageprocess)\text{ in }\integrablefunctions^2(\timedomain\times\spatialdomain)^{\dimension},\quad\damageprocess_n \not\to \damageprocess\text{ in }\damageprocesses_s\qquad (n\to\infty).
  \end{equation}
  holds.
\end{Proposition}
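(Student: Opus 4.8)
The plan is to exploit the gap between the strong parameter norm of $\damageprocesses_s=\damageprocesses\cap\qisobolevfunctions^s(\timedomain\times\spatialdomain\times Y)$ and the coarser norm of $\damageprocesses$, the two being bridged by the compact embedding $\damageprocesses_s\hookrightarrow\hookrightarrow\damageprocesses$ noted above. For each $\radius>0$ I would exhibit a sequence in $\ball_{\damageprocesses_s}(\damageprocess;\radius)$ that converges to $\damageprocess$ in the $\damageprocesses$-norm while staying a fixed distance away in $\damageprocesses_s$, and then transport the convergence of the images through the Lipschitz continuity of $\forwardoperator$ with respect to the $\damageprocesses$-norm recorded in \eqref{eq:estimate_displacments_via_damageprocesses} (Theorem \ref{theo:direct_problem_well-posed}). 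Assuming the nondegenerate case $\damageconstant_0<\damageconstant_1$ (otherwise $\damageprocesses_s$ is a singleton and the statement is vacuous), write $M:=\timedomainmax^{-1}(\damageconstant_1-\damageconstant_0)(1-\damageconstant_1)^{\alpha}>0$ for the upper admissibility bound of \eqref{eq:damage_sources}, fix a nonzero $\rho\in\continuousfunctions_c^{\infty}(\timedomain\times\spatialdomain\times Y)$, and set $\phi_n(\timevariable,\spatialvariables,\boldsymbol{y}):=\epsilon_n\sin(2\pi n\timevariable/\timedomainmax)\rho(\timevariable,\spatialvariables,\boldsymbol{y})$. Choosing $\epsilon_n\asymp n^{-s}$ so that $\|\phi_n\|_{\qisobolevfunctions^s(\timedomain\times\spatialdomain\times Y)}=\radius/2$ for all $n$, the heavy weighting of the top-order $\timevariable$-derivatives forces $\epsilon_n\to0$, hence $\|\phi_n\|_{\integrablefunctions^\infty(\timedomain\times\spatialdomain\times Y)}\to0$, while $\|\phi_n\|_{\damageprocesses}\le C\epsilon_n\to0$ since the $\damageprocesses$-norm involves no time derivatives.

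The only delicate point is keeping the perturbed processes admissible, as $\damageprocess$ may attain the pointwise bounds $0$ or $M$ on a set of positive measure and the sign-changing $\phi_n$ could then leave $\damageprocesses$. I would resolve this by first shifting towards the interior constant $g^{*}\equiv M/2\in\damageprocesses_s$: put $\damageprocess_{\theta}:=(1-\theta)\damageprocess+\theta g^{*}$, which obeys $\theta M/2\le\damageprocess_{\theta}\le M-\theta M/2$ and thus keeps a uniform margin $\theta M/2$ from both bounds. Defining $\damageprocess_n:=\damageprocess_{\theta_n}+\phi_n$ with $\theta_n:=4\epsilon_n/M\to0$, the bound $|\phi_n|\le\epsilon_n\le\theta_nM/2$ guarantees $0\le\damageprocess_n\le M$ a.e., and since $\damageprocess_{\theta_n}$ and $\phi_n$ both lie in $\qisobolevfunctions^{s}$ we obtain $\damageprocess_n\in\damageprocesses_s$.

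It then remains to verify the three requirements. Ball membership follows from $\|\damageprocess_n-\damageprocess\|_{\qisobolevfunctions^s}\le\theta_n\|g^{*}-\damageprocess\|_{\qisobolevfunctions^s}+\radius/2\le\radius$ for $n$ large. For the images, $\|\damageprocess_n-\damageprocess\|_{\damageprocesses}\le\theta_n\|g^{*}-\damageprocess\|_{\damageprocesses}+C\epsilon_n\to0$, so $\damageprocess_n\to\damageprocess$ in $\damageprocesses$, and \eqref{eq:estimate_displacments_via_damageprocesses} composed with the embedding $\integrablefunctions^\infty(\timedomain;\spatialfunctions)\hookrightarrow\integrablefunctions^2(\timedomain\times\spatialdomain)^{\dimension}$ yields $\forwardoperator(\damageprocess_n)\to\forwardoperator(\damageprocess)$. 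Finally $\|\damageprocess_n-\damageprocess\|_{\qisobolevfunctions^s}\ge\|\phi_n\|_{\qisobolevfunctions^s}-\theta_n\|g^{*}-\damageprocess\|_{\qisobolevfunctions^s}\ge\radius/2-o(1)\ge\radius/4$ eventually, so $\damageprocess_n\not\to\damageprocess$ in $\damageprocesses_s$, which is exactly \eqref{eq:locally_ill-posed}.

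The main obstacle is precisely this admissibility bookkeeping: reconciling $\damageprocess_n\in\damageprocesses_s$ with a perturbation of order-one $\qisobolevfunctions^s$-size forces the oscillation amplitude and the interior-shift parameter to be coupled through $n^{-s}$, and it is the high Sobolev order $s$ that makes the construction work, since it lets a perturbation of vanishing sup-norm carry order-one $\qisobolevfunctions^s$-energy. Abstractly, the same $\phi_n$ is bounded but not relatively compact in $\damageprocesses_s$ and converges weakly to $0$ there, hence strongly to $0$ in $\damageprocesses$ by compactness of the embedding; composing with the continuous $\forwardoperator$ shows that $\forwardoperator$ is sequentially weak-to-strong continuous from $\damageprocesses_s$ into $\integrablefunctions^2(\timedomain\times\spatialdomain)^{\dimension}$, which is the standard sufficient condition for local ill-posedness in the sense of \cite{BeHo00}.
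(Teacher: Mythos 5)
Your proof is correct, and its skeleton is the same as the paper's: produce a sequence that is bounded away from $\damageprocess$ in the strong $\qisobolevfunctions^s$-norm but converges to $\damageprocess$ in the coarser $\damageprocesses$-norm, then push the latter convergence through the Lipschitz estimate \eqref{eq:estimate_displacments_via_damageprocesses}. The difference is in how the sequence is built. The paper takes an abstract complete orthonormal system $(\varphi_n)_n$ of $\qisobolevfunctions^s(\timedomain\times\spatialdomain\times Y)$ and sets $\damageprocess_n:=\damageprocess+\rho\varphi_n$, invoking $\varphi_n\rightharpoonup 0$ and the compact embedding into $\qisobolevfunctions^{s-1}\subset\integrablefunctions^\infty(\timedomain;\integrablefunctions^\infty(\spatialdomain;\continuousfunctions^{0,1}(\overline{Y})))$; you instead use explicit high-frequency perturbations $\epsilon_n\sin(2\pi n\timevariable/\timedomainmax)\rho$ with $\epsilon_n\sim n^{-s}$. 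Your version buys something real: the paper never verifies that $\damageprocess+\rho\varphi_n$ still satisfies the pointwise admissibility bounds $0\le\damageprocess_n\le\timedomainmax^{-1}(\damageconstant_1-\damageconstant_0)(1-\damageconstant_1)^{\alpha}$ required for membership in $\damageprocesses_s$ (an arbitrary ONS element has no sign control, and $\damageprocess$ may saturate the bounds), whereas your interior shift towards $g^{*}\equiv M/2$ coupled to the vanishing sup-norm of $\phi_n$ closes exactly this gap; you also correctly flag the degenerate case $\damageconstant_0=\damageconstant_1$, where $\damageprocesses_s$ collapses and the statement cannot hold as formulated. The paper's route is shorter and makes the ``weak-to-strong continuity implies local ill-posedness'' mechanism of \cite{BeHo00} more transparent, but as written it is incomplete on the constraint issue; your argument is the one that actually survives scrutiny. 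The only cosmetic caveat is that your ball-membership and separation estimates hold only for $n$ large, so one should discard finitely many initial terms.
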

\begin{proof}
  The fact that $\qisobolevfunctions^s(\timedomain\times\spatialdomain\times Y)$ is a Hilbert space of infinite dimension yields the existence of a complete orthonormal system $(\varphi_n)_{n\in\mathbb{N}}$ such that $\perturbation\in\qisobolevfunctions^s(\timedomain\times\spatialdomain\times Y)$ may be decomposed to $\perturbation=\sum_{n=1}^{\infty}\langle\perturbation,\varphi_n\rangle\varphi_n$ with $\|\perturbation\|_{\damageprocesses_s}^2 = \sum_{n=1}^{\infty}\langle\perturbation,\varphi_n\rangle^2 < \infty$ as well as $\langle\perturbation,\varphi_n\rangle\to 0$ as $n\to\infty$.
  For any $0<\rho\le\radius$ we have $\damageprocess_n:=\damageprocess + \rho\varphi_n\in\ball_{\damageprocesses_s}(\damageprocess;\rho)$ entailing $\varphi_n\rightharpoonup 0$, $\damageprocess_n\rightharpoonup\damageprocess$, as well as $\|\damageprocess_n - \damageprocess\| = \rho > 0$ for $n\to\infty$.
  Due to Sobolev's embedding theorem, we get strong convergence for $(\damageprocess_n)_{n\in\mathbb{N}}$ in $\qisobolevfunctions^{s-1}(\timedomain\times\spatialdomain\times Y)\subset\integrablefunctions^\infty(\timedomain;\integrablefunctions^\infty(\spatialdomain;\continuousfunctions^{0,1}(\overline{Y})))$.
  In combination with \eqref{eq:estimate_displacments_via_damageprocesses} this yields \eqref{eq:locally_ill-posed}.
\end{proof}
The interdependence between local ill-posedness of nonlinear inverse problems and their linearization was thoroughly investigated in \cite{HoSche94, HoSche98}.
The following result is htaken from these references then yields ill-posedness of the linearization.
\begin{Proposition}\label{prop:lin_inverse_problem_ill-posed}
  Let $X$, $Y$ denote Hilbert spaces and let $\forwardoperator\colon\domain(\forwardoperator)\subset X\to Y$ be differentiable with locally Lipschitz derivative in $\damageprocess\in\interior(\domain(\forwardoperator))$.
  If \eqref{eq:inverse_problem} is locally ill-posed, so is its linearization \eqref{eq:lin_inverse_problem}, too.
\end{Proposition}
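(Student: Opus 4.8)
The plan is to argue by \emph{contraposition}: I will show that if the linearized equation \eqref{eq:lin_inverse_problem} is locally well-posed, then the nonlinear problem \eqref{eq:inverse_problem} is locally well-posed as well, directly contradicting the hypothesis via the definition recorded in \Cref{prop:inverse_problem_ill-posed}.

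First I would unwind what local ill-posedness means for the bounded linear operator $A:=\partial\forwardoperator(\damageprocess)$. Since the linearized equation is posed around the reference perturbation $\perturbation=0$, local well-posedness at $0$ reads: there exists $r>0$ such that for every sequence $\perturbation_n\in\ball(0;r)$ the implication $A\perturbation_n\to 0\Rightarrow\perturbation_n\to 0$ holds. By homogeneity of $A$ this is equivalent to $A$ being \emph{bounded below}, i.e.\ to the existence of a constant $c>0$ with $\|A\perturbation\|_Y\ge c\|\perturbation\|_X$ for all $\perturbation\in X$. Indeed, were $A$ not bounded below there would be unit vectors $\perturbation_n$ with $\|A\perturbation_n\|_Y\to 0$, and rescaling them into $\ball(0;r)$ would violate well-posedness.

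Next I would combine this lower bound with the Fr\'echet-differentiability from \Cref{cor:forward_operator_differentiable} and the assumed local Lipschitz continuity of the derivative. The Lipschitz bound, inserted into $\forwardoperator(\damageprocess_n)-\forwardoperator(\damageprocess)-A(\damageprocess_n-\damageprocess)=\int_0^1\big(\partial\forwardoperator(\damageprocess+s(\damageprocess_n-\damageprocess))-A\big)(\damageprocess_n-\damageprocess)\,ds$, yields a quadratic remainder estimate $\|\forwardoperator(\damageprocess_n)-\forwardoperator(\damageprocess)-A(\damageprocess_n-\damageprocess)\|_Y\le\tfrac{L}{2}\|\damageprocess_n-\damageprocess\|_X^2$ valid whenever $\damageprocess_n$ lies in a fixed neighborhood of $\damageprocess$ of radius $\rho$, with $L$ the local Lipschitz constant. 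A reverse triangle inequality then gives
\[
  \|\forwardoperator(\damageprocess_n)-\forwardoperator(\damageprocess)\|_Y
  \ge \|A(\damageprocess_n-\damageprocess)\|_Y-\tfrac{L}{2}\|\damageprocess_n-\damageprocess\|_X^2
  \ge \Big(c-\tfrac{L}{2}\|\damageprocess_n-\damageprocess\|_X\Big)\|\damageprocess_n-\damageprocess\|_X .
\]
Shrinking the radius to $r_0:=\min\{\rho,\,c/L\}$ forces the bracketed factor to be at least $c/2$, so that $\|\forwardoperator(\damageprocess_n)-\forwardoperator(\damageprocess)\|_Y\ge\tfrac{c}{2}\|\damageprocess_n-\damageprocess\|_X$ on $\ball(\damageprocess;r_0)$.

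Finally I would read off the conclusion: whenever $\forwardoperator(\damageprocess_n)\to\forwardoperator(\damageprocess)$ with $\damageprocess_n\in\ball(\damageprocess;r_0)$, the last inequality forces $\damageprocess_n\to\damageprocess$, which is exactly local well-posedness of \eqref{eq:inverse_problem} at $\damageprocess$ and contradicts the standing assumption. The hard part is not analytic depth but transferring the definitions correctly: one must pin down the equivalent formulation of linearized ill-posedness as the failure of $A$ to be bounded below, so that the nonlinear stability estimate can be read as a statement about the linear operator. Once that bookkeeping is settled, the differentiability and Lipschitz bounds supply the estimate essentially mechanically.
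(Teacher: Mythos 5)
Your argument is correct, and it is worth noting that the paper itself offers no proof of this proposition at all: it is stated as a result ``taken from'' \cite{HoSche94, HoSche98}, so your write-up actually supplies the missing argument. What you give is essentially the standard proof from those references: contraposition, the observation that local well-posedness of a linear equation at one point is (by translation and homogeneity) equivalent to the derivative $A=\partial\forwardoperator(\damageprocess)$ being bounded below, the quadratic Taylor-remainder bound $\tfrac{L}{2}\|\perturbation\|_X^2$ obtained from the locally Lipschitz derivative via the integral mean value theorem (legitimate here because $\damageprocess\in\interior(\domain(\forwardoperator))$ guarantees the whole segment lies in the domain), and the reverse triangle inequality yielding the local stability estimate $\|\forwardoperator(\damageprocess_n)-\forwardoperator(\damageprocess)\|_Y\ge\tfrac{c}{2}\|\damageprocess_n-\damageprocess\|_X$ on a sufficiently small ball. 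The only bookkeeping point you gloss over is that the paper's formulation \eqref{eq:lin_inverse_problem} restricts the perturbation to $\perturbation\in\damageprocesses_s$ rather than all of $X$; your equivalence ``locally well-posed at $0$ $\Leftrightarrow$ bounded below'' needs the admissible increments to fill out a full ball of directions, which is the reading intended by the abstract Hilbert-space statement of the proposition (and is consistent with Corollary \ref{cor:lin_inverse_problem_ill-posed} calling the linearized problem \emph{globally} ill-posed), but it would be worth one sentence to say so explicitly.
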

\begin{Corollary}\label{cor:lin_inverse_problem_ill-posed}
  The linearized inverse problem \eqref{eq:lin_inverse_problem} is (globally) ill-posed.
\end{Corollary}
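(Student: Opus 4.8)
The plan is to combine the two preceding propositions with the linearity and compactness of the linearized forward operator. Write $A:=\partial\forwardoperator(\damageprocess)$, regarded as a map $\qisobolevfunctions^s(\timedomain\times\spatialdomain\times Y)\to\integrablefunctions^2(\timedomain\times\spatialdomain)^{\dimension}$. First I would record the two structural facts that drive everything: $A$ is linear, because a Fréchet derivative is by definition a bounded linear operator, and $A$ is compact, which was already observed to follow from factoring $A$ through the compact embedding $\damageprocesses_s\hookrightarrow\hookrightarrow\damageprocesses$ and the otherwise bounded linear constituents of $\partial\forwardoperator(\damageprocess)$ exhibited in Corollary \ref{cor:forward_operator_differentiable}.

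Next I would invoke the hypotheses of Proposition \ref{prop:lin_inverse_problem_ill-posed}. Corollary \ref{cor:forward_operator_differentiable} supplies Fréchet-differentiability of $\forwardoperator$, and the Lipschitz estimates underlying Theorem \ref{theo:direct_problem_well-posed} together with the differentiability lemmas give a locally Lipschitz derivative, so the hypotheses are met. Since Proposition \ref{prop:inverse_problem_ill-posed} already establishes local ill-posedness of the nonlinear problem at $\damageprocess$, Proposition \ref{prop:lin_inverse_problem_ill-posed} yields local ill-posedness of the linearization at $\damageprocess$: there is a sequence $(\perturbation_n)$ in a ball around $\damageprocess$ with $A\perturbation_n\to A\damageprocess$ in $\integrablefunctions^2(\timedomain\times\spatialdomain)^{\dimension}$ while $\perturbation_n\not\to\damageprocess$ in $\damageprocesses_s$; passing to a subsequence I may assume $\|\perturbation_n-\damageprocess\|_{\damageprocesses_s}\ge c>0$.

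The decisive step is to upgrade this local statement to a global one, and here linearity does the work. Setting $\psi_n:=(\perturbation_n-\damageprocess)/\|\perturbation_n-\damageprocess\|_{\damageprocesses_s}$ and using that $A$ is linear gives $\|\psi_n\|_{\damageprocesses_s}=1$ together with $A\psi_n\to 0$. Hence $\inf_{\|\psi\|=1}\|A\psi\|=0$, so $A$ is not bounded below and its Moore--Penrose inverse is unbounded; equivalently, since $A$ is compact, its range is infinite-dimensional (a compact operator with closed range is finite rank, and a finite-rank operator is bounded below on the orthogonal complement of its kernel). By the criterion recalled at the start of Section \ref{sec:Ill_Posedness_of_Nonlinear_Inverse_Problem} — a linear equation with compact operator is globally ill-posed exactly when its range is infinite-dimensional — the linearized inverse problem \eqref{eq:lin_inverse_problem} is globally ill-posed.

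I expect the main obstacle to be presenting the local-to-global passage rigorously: one must confirm that the normalization is consistent with the convergence notions in the definition of local ill-posedness, and that for a compact operator the property ``not bounded below'' is genuinely equivalent to having infinite-dimensional, non-closed range — the latter equivalence resting on the spectral theory of compact operators. Checking the locally Lipschitz continuity of $\partial\forwardoperator$ demanded by Proposition \ref{prop:lin_inverse_problem_ill-posed} is routine but should be stated explicitly rather than taken for granted.
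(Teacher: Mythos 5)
Your overall route coincides with the paper's: the corollary is obtained by feeding Proposition \ref{prop:inverse_problem_ill-posed} into Proposition \ref{prop:lin_inverse_problem_ill-posed} and then using linearity of $\partial\forwardoperator(\damageprocess)$ to pass from a local to a global statement, and the paper itself supplies no argument beyond this. The difficulty lies in your execution of the local-to-global step. From the sequence furnished by local ill-posedness of the linearization you obtain unit vectors $\psi_n$ with $\partial\forwardoperator(\damageprocess)\psi_n\to 0$, but nothing in the definition prevents these $\psi_n$ from lying in, or converging into, the kernel of the linearized operator. ``Not bounded below'' is therefore strictly weaker than ``unbounded Moore--Penrose inverse'': every operator with nontrivial kernel fails to be bounded below, yet an orthogonal projection, say, has a bounded generalized inverse and closed, possibly finite-dimensional, range. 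Consequently the chain $\inf_{\|\psi\|=1}\|A\psi\|=0\ \Rightarrow\ A^{\dagger}$ unbounded $\Rightarrow$ the range of $A$ is infinite-dimensional does not go through as written; to invoke the compact-operator criterion recalled at the start of Section \ref{sec:Ill_Posedness_of_Nonlinear_Inverse_Problem} you would need $\psi_n\perp\ker A$ with $A\psi_n\to 0$, or an independent verification that the range of $\partial\forwardoperator(\damageprocess)$ is infinite-dimensional, and the local ill-posedness definition hands you neither.

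The repair is cheap and avoids the range criterion entirely: for a linear equation, local ill-posedness at one point $\perturbation_0$ transfers to every point $\perturbation_0'$ by translating the witnessing sequence, $\perturbation_n':=\perturbation_n-\perturbation_0+\perturbation_0'$, since $\partial\forwardoperator(\damageprocess)\perturbation_n'\to\partial\forwardoperator(\damageprocess)\perturbation_0'$ and $\perturbation_n'\not\to\perturbation_0'$. Local ill-posedness somewhere is thus already global ill-posedness in the sense of local ill-posedness everywhere, which is what the corollary asserts. If you do want the stronger statement that the range is infinite-dimensional and non-closed, that should be argued separately (for instance via injectivity of the linearization or an explicit infinite family of linearly independent images), not deduced from the failure of a lower bound over all unit vectors. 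Your remaining points, the compactness of $\partial\forwardoperator(\damageprocess)$ and the need to check the locally Lipschitz derivative hypothesis of Proposition \ref{prop:lin_inverse_problem_ill-posed}, are consistent with the paper.
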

\subsection{Strong Nonlinearity Condition/Tangential Cone Condition}\label{sec:strong_tangential_cone_condition}
The \emph{tangential cone condition}, i.e.,
\begin{multline}\label{eq:tangential_cone_condition}
  \|\forwardoperator(\damageprocess') - \forwardoperator(\damageprocess) - \partial\forwardoperator(\damageprocess)(\damageprocess' - \damageprocess)\|_{\integrablefunctions^2(\timedomain\times\spatialdomain)^{\dimension}} \\
  \le\eta\|\forwardoperator(\damageprocess)-\forwardoperator(\damageprocess')\|_{\integrablefunctions^2(\timedomain\times\spatialdomain)^{\dimension}},\ \eta<\frac{1}{2}\text{ for all }\damageprocess,\ \damageprocess'\in\ball(\damageprocess_0;\radius)\subset\domain(\forwardoperator),
\end{multline}
is a local property in a ball $\ball(\damageprocess;\radius)$ for some $\damageprocess_0\in\damageprocesses_s$ and essential to ensure convergence for iterative methods like the nonlinear Landweber iteration, see for example \cite{HaNeuSche95}, where the authors investigate the convergence behavior of said method.
To get optimal convergence results and also in case of inexact Newton iterations more restrictive assumptions are necessary, see \cite{HaNeuSche95, AnRi99, AnRi01}.
We start by showing the main theorem from which we then can infer \eqref{eq:tangential_cone_condition}, the tangential cone condition. We adept the strategy used in \cite{HuSheNeuSche18} to our setting.
\begin{Remark}
  To establish an estimate like \eqref{eq:pre_tangential_cone_condition}, we need higher regularity for the displacements, at least $\qisobolevfunctions^2(\spatialdomain)^{\dimension}$ in space and just Dirichlet- or Neumann type boundary conditions for the direct problem.
  In general, mixed boundary conditions are not possible since we can only expect $\displacements\in\continuousfunctions^{\frac{2}{3}-\varepsilon}(\spatialdomain)$ cf.~\cite{GiSa97}.
  Robin type boundary conditions will not work in any dimension (see \eqref{eq:pre_tangential_cone_condition_2}).
\end{Remark}
\begin{Theorem}\label{theo:pre_tangential_cone_condition}
There exists a constant $c>0$ such thtat for each $\damageprocess,\perturbation\in\damageprocesses_s$
\begin{equation}\label{eq:pre_tangential_cone_condition}
  \|\forwardoperator(\damageprocess + \perturbation) - \forwardoperator(\damageprocess) - \partial\forwardoperator(\damageprocess)\perturbation\|_{\integrablefunctions^2(\timedomain\times\spatialdomain)^{\dimension}} %
  \le c\|\perturbation\|_{\damageprocesses}\|\forwardoperator(\damageprocess + \perturbation)-\forwardoperator(\damageprocess)\|_{\integrablefunctions^2(\timedomain\times\spatialdomain)^{\dimension}}
\end{equation}
holds.
\end{Theorem}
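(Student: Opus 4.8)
The plan is to introduce the two states $(\displacements,\damage):=\ptsmap(\damageprocess)$ and $(\tilde{\displacements},\tilde{\damage}):=\ptsmap(\damageprocess+\perturbation)$ together with the linearised fields $(\displacements_\perturbation,\damage_\perturbation):=\partial\ptsmap(\damageprocess)\perturbation$ from Theorem \ref{theo:derivative_of_pts_map}, and to work with the second-order errors $e_{\displacements}:=\tilde{\displacements}-\displacements-\displacements_\perturbation$ and $e_{\damage}:=\tilde{\damage}-\damage-\damage_\perturbation$. Since $\forwardoperator=\embedding\circ\projection_1\circ\ptsmap$, the left-hand side of \eqref{eq:pre_tangential_cone_condition} equals $\|e_{\displacements}\|_{\integrablefunctions^2(\timedomain\times\spatialdomain)^{\dimension}}$ and the factor $\|\forwardoperator(\damageprocess+\perturbation)-\forwardoperator(\damageprocess)\|_{\integrablefunctions^2}$ equals $\|\tilde{\displacements}-\displacements\|_{\integrablefunctions^2}$, so the claim becomes $\|e_{\displacements}\|_{\integrablefunctions^2}\le c\|\perturbation\|_{\damageprocesses}\|\tilde{\displacements}-\displacements\|_{\integrablefunctions^2}$. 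First I would subtract the two nonlinear momentum balances \eqref{eq:EqOfMo} from one another and from the linearised balance \eqref{eq:lin_prob_EqOfMo}; a short Taylor expansion shows that $e_{\displacements}$ solves $\timedependentelasticityoperator(\damage)e_{\displacements}=-\timedependentelasticityoperator(1+e_{\damage})\displacements$ (in the notation of \eqref{eq:lin_prob_EqOfMo}) plus a genuinely quadratic source, the decisive contribution of which is, in weak form, $\int_{\spatialdomain}\damage_\perturbation\,\elasticitytensor\strains(\tilde{\displacements}-\displacements):\strains(\cdot)\,d\spatialvariables$. The analogous subtraction for the damage evolution, using $\widetilde{\nemytskiioperator}=\nemytskiioperator+H$ and the characterisation \eqref{eq:lin_prob_DaEvEq}, produces an evolution equation for $e_{\damage}$ whose right-hand side is linear in $(e_{\damage},\mollifiedgradient e_{\displacements})$ plus products of two first-order differences.

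The heart of the argument is to measure $e_{\displacements}$ in the weak data norm $\integrablefunctions^2$ rather than in the natural energy norm $\spatialfunctions$. For this I would run an Aubin--Nitsche duality argument: for $\boldsymbol{\phi}\in\integrablefunctions^2(\spatialdomain)^{\dimension}$ let $\boldsymbol{z}$ solve the self-adjoint auxiliary problem $\timedependentelasticityoperator(\damage)\boldsymbol{z}=\boldsymbol{\phi}$, test the $e_{\displacements}$-equation with $\boldsymbol{z}$, and integrate the decisive term by parts to move the spatial derivative off $\tilde{\displacements}-\displacements$. Invoking the $\qisobolevfunctions^2$-regularity of Proposition \ref{prop:EqOfMo_higher_diff} and Corollary \ref{cor:fp_higher_reg} (so that $\boldsymbol{z}\in\qisobolevfunctions^2$ with $\|\boldsymbol{z}\|_{\qisobolevfunctions^2}\le c\|\boldsymbol{\phi}\|_{\integrablefunctions^2}$, and $\damage_\perturbation\in\sobolevfunctions^{1,\infty}$ with $\|\damage_\perturbation\|_{\sobolevfunctions^{1,\infty}}\le c\|\perturbation\|$) yields, after taking the supremum over $\|\boldsymbol{\phi}\|_{\integrablefunctions^2}=1$,
\[
  \|e_{\displacements}(\timevariable)\|_{\integrablefunctions^2(\spatialdomain)}\le c\big(\|e_{\damage}(\timevariable)\|_{\integrablefunctions^\infty(\spatialdomain)}+\|\perturbation\|_{\damageprocesses}\|(\tilde{\displacements}-\displacements)(\timevariable)\|_{\integrablefunctions^2(\spatialdomain)}\big)\quad\text{a.e. in }\timedomain .
\]
It is precisely here that only pure Dirichlet or pure Neumann conditions are admissible: the boundary integral produced by the integration by parts vanishes in those two cases (with $\tilde{\displacements}-\displacements=\mbf{0}$ on $\boundary_0$, respectively the traction matching on $\boundary_1$), whereas mixed conditions destroy the $\qisobolevfunctions^2$-regularity and a Robin condition leaves an uncontrollable boundary term.

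Next I would estimate $e_{\damage}$. Because time is only a parameter and the damage evolution is a Volterra equation, Gronwall's lemma reduces the task to bounding the right-hand side. The key tool is the smoothing property of the mollified gradient: writing $\mollifiedgradient\boldsymbol{v}=\mollifier\convolution\nabla\boldsymbol{v}=(\nabla\mollifier)\convolution\boldsymbol{v}$ gives $\|\mollifiedgradient\boldsymbol{v}\|_{\integrablefunctions^\infty(\spatialdomain)}\le c(\mu)\|\boldsymbol{v}\|_{\integrablefunctions^2(\spatialdomain)}$, so every displacement-difference factor appearing in the damage remainders can be converted into the $\integrablefunctions^2$-norm of $\tilde{\displacements}-\displacements$ (rather than an $\sobolevfunctions^{1,p}$-norm as in Lemma \ref{lem:g_is_Lipschitz}), while the complementary factor of each quadratic product is bounded by $c\|\perturbation\|_{\damageprocesses}$ via the Lipschitz dependence of Theorem \ref{theo:direct_problem_well-posed} (cf.~\eqref{eq:estimate_displacments_via_damageprocesses}). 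This delivers, a.e.\ in $\timedomain$,
\[
  \|e_{\damage}(\timevariable)\|_{\integrablefunctions^\infty(\spatialdomain)}\le c\int_0^{\timevariable}\big(\|e_{\displacements}(\timestepsize)\|_{\integrablefunctions^2(\spatialdomain)}+\|\perturbation\|_{\damageprocesses}\|(\tilde{\displacements}-\displacements)(\timestepsize)\|_{\integrablefunctions^2(\spatialdomain)}\big)\,d\timestepsize .
\]

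Finally I would close the coupling. Substituting the damage estimate into the momentum estimate produces a scalar Volterra inequality for $\timevariable\mapsto\|e_{\displacements}(\timevariable)\|_{\integrablefunctions^2(\spatialdomain)}$ with inhomogeneity $c\|\perturbation\|_{\damageprocesses}\|(\tilde{\displacements}-\displacements)(\timevariable)\|_{\integrablefunctions^2(\spatialdomain)}$; Gronwall's lemma together with the boundedness of the Volterra operator on $\integrablefunctions^2(\timedomain)$ then gives $\|e_{\displacements}\|_{\integrablefunctions^2(\timedomain\times\spatialdomain)}\le c\|\perturbation\|_{\damageprocesses}\|\tilde{\displacements}-\displacements\|_{\integrablefunctions^2(\timedomain\times\spatialdomain)}$, which is \eqref{eq:pre_tangential_cone_condition}. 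The main obstacle I anticipate is controlling the quadratic damage remainders \emph{in the output norm} $\|\tilde{\displacements}-\displacements\|_{\integrablefunctions^2}$ and not merely in $\|\perturbation\|_{\damageprocesses}$: because the forward map is far from co-coercive (a perturbation $\perturbation$ supported where $\mollifiedgradient\displacements$ is not attained produces no change in the data), the naive bound by $\|\perturbation\|^2$ is useless, and it is the combination of the mollifier smoothing with the strain weighting $\elasticitytensor\strains(\displacements)$ carried through the duality step that lets each quadratic term retain one genuine factor $\|\tilde{\displacements}-\displacements\|_{\integrablefunctions^2}$. Getting the boundary terms and this norm bookkeeping exactly right — rather than the routine Gronwall and Lax--Milgram invocations — is where the real work lies.
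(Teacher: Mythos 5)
Your proposal is correct, and its core coincides with the paper's own argument: both rest on the duality pairing with the auxiliary solution $\displacements_w=O_E^{w}(\ptsmap_2(\damageprocess))$ of $\timedependentelasticityoperator(\ptsmap_2(\damageprocess))\displacements_w=w$, an integration by parts that moves the spatial derivative off the first-order displacement difference $\ptsmap_1(\damageprocess+\perturbation)-\ptsmap_1(\damageprocess)$, the vanishing of the resulting boundary term for pure Dirichlet or pure Neumann data via $\qisobolevfunctions^2$-regularity (and its failure for Robin conditions), and the bound $\|\ptsmap_2(\damageprocess+\perturbation)-\ptsmap_2(\damageprocess)\|_{\integrablefunctions^\infty(\timedomain;\sobolevfunctions^{1,\infty}(\spatialdomain))}\le c\|\perturbation\|$ supplied by the higher-regularity results. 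Where you go beyond the paper is in the bookkeeping of the second-order damage error $e_{\damage}=\tilde{\damage}-\damage-\damage_\perturbation$: the paper's identity \eqref{eq:pre_tangential_cone_condition_1} reduces the pairing to a single product of two first-order differences, but since $\partial\ptsmap_1(\damageprocess)\perturbation=\partial O_E(\ptsmap_2(\damageprocess))\damage_\perturbation$ with $\damage_\perturbation\neq\tilde{\damage}-\damage$, an additional contribution of the form $\int e_{\damage}\,\elasticitytensor\strains(\ptsmap_1(\damageprocess)):\strains(\displacements_w)$ survives, and your coupled Volterra/Gronwall closure is exactly what is needed to control it in the output norm; the paper leaves this step implicit. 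The one point to double-check is the smoothing bound $\|\mollifiedgradient\boldsymbol{v}\|_{\integrablefunctions^\infty(\spatialdomain)}\le c(\mu)\|\boldsymbol{v}\|_{\integrablefunctions^2(\spatialdomain)}$ on which your damage-remainder estimate relies: it is valid for the convolution mollifiers of Remark \ref{rem:on_mollifiers} and for the averaged difference quotient used in Lemma \ref{lem:g_is_Lipschitz} for $\dimension=3$, but it fails for the bare difference quotient \eqref{def:mollified_gradient}, which only maps $\integrablefunctions^\infty$ to $\integrablefunctions^\infty$. With a convolution-type regularisation of the gradient your argument closes completely.
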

\begin{proof}
  We take $w\in\integrablefunctions^2(\timedomain\times\spatialdomain)^{\dimension}$ and $\damageprocess,\perturbation\in\qisobolevfunctions^s(\timedomain\times\spatialdomain\times Y)$ such that $\damageprocess$, $\damageprocess + \perturbation \in\damageprocesses_s$. Realizing that $\forwardoperator(\damageprocess + \perturbation) - \forwardoperator(\damageprocess) - \partial\forwardoperator(\damageprocess)\perturbation \in\spatialfunctions\subset\integrablefunctions^2(\timedomain\times\spatialdomain)^{\dimension}$ is an admissible test function, we get
  \begin{multline}\label{eq:pre_tangential_cone_condition_1}
    \langle \forwardoperator(\damageprocess + \perturbation) - \forwardoperator(\damageprocess) - \partial\forwardoperator(\damageprocess)\perturbation, w \rangle_{\integrablefunctions^2(\timedomain\times\spatialdomain)^{\dimension}} \\
    = \langle \timedependentelasticityoperator(\ptsmap_2(\damageprocess))\displacements_w, \ptsmap_1(\damageprocess + \perturbation) - \ptsmap_1(\damageprocess) - \partial\ptsmap_1(\damageprocess)\perturbation \rangle \\
    =\langle \timedependentelasticityoperator(1+(\ptsmap_2(\damageprocess + \perturbation) + \ptsmap_2(\damageprocess)))(\ptsmap_1(\damageprocess + \perturbation) - \ptsmap_1(\damageprocess) ,\displacements_w \rangle,
  \end{multline}
  where $\displacements_w=O_E^w(\ptsmap_2(\damageprocess))$ denotes the unique solution guaranteed by Proposition \ref{prop:EqOfMo_well-posed}.
  Here, employing symmetric properties of $\timedependentelasticityoperator\colon\spatialfunctions\to\spatialfunctions^*$ and a suitable version of the identity presented in \eqref{eq:O_E_derivative_eq_1} then established the latter equality.
  Turning to its definition, we can then rewrite this equation and use integration by parts to obtain the following
  \begin{multline}\label{eq:pre_tangential_cone_condition_2}
    \langle \timedependentelasticityoperator(1+(\ptsmap_2(\damageprocess + \perturbation) + \ptsmap_2(\damageprocess)))(\ptsmap_1(\damageprocess + \perturbation) - \ptsmap_1(\damageprocess) ,\displacements_w \rangle \\
      = \int_0^{\timedomainmax}\int_{\spatialdomain} (\ptsmap_2(\damageprocess +\perturbation) - \ptsmap_2(\damageprocess))\elasticitytensor\strains(\ptsmap_1(\damageprocess+\perturbation)-\ptsmap_1(\damageprocess)):\strains(\displacements_w)\,d\spatialvariables\,d\timevariable \qquad\qquad\\
      = -\int_0^{\timedomainmax}\int_{\spatialdomain} \divergence((\ptsmap_2(\damageprocess +\perturbation) - \ptsmap_2(\damageprocess))\elasticitytensor\strains(\displacements_w))(\ptsmap_1(\damageprocess+\perturbation)-\ptsmap_1(\damageprocess))\,d\spatialvariables\,d\timevariable \\
     + \int_0^{\timedomainmax}\int_{\partial\spatialdomain} (\ptsmap_2(\damageprocess +\perturbation) - \ptsmap_2(\damageprocess))\elasticitytensor\strains(\displacements_w)\normals
      \cdot(\ptsmap_1(\damageprocess + \perturbation) - \ptsmap_1(\damageprocess)) \,d\mathcal{H}^{\dimension-1}\,d\timevariable.
  \end{multline}
  Since $\displacements_w\in\qisobolevfunctions^2(\timedomain\times\spatialdomain)^{\dimension}$ entails $(1-\ptsmap_2(\damageprocess))\elasticitytensor\strains(\displacements_w)\normals = 0$ a.e.~on $\partial\spatialdomain$, together with \eqref{eq:damage_functions} we can argue $\elasticitytensor\strains(\displacements_w)\normals = 0$ and therefore see that the boundary integral vanishes in \eqref{eq:pre_tangential_cone_condition_2}.
  Please note that this argument does not hold for Robin-type boundary conditions.
  This leaves us with
  \begin{multline}\label{eq:pre_tangential_cone_condition_3}
    \langle \forwardoperator(\damageprocess + \perturbation) - \forwardoperator(\damageprocess) - \partial\forwardoperator(\damageprocess)\perturbation, w \rangle_{\integrablefunctions^2(\timedomain\times\spatialdomain)^{\dimension}} \\
    = -\int_0^{\timedomainmax}\int_{\spatialdomain} \divergence((\ptsmap_2(\damageprocess +\perturbation) - \ptsmap_2(\damageprocess))\elasticitytensor\strains(\displacements_w))(\ptsmap_1(\damageprocess+\perturbation)-\ptsmap_1(\damageprocess))\,d\spatialvariables\,d\timevariable \hspace{2.7cm}\\
    \le \int_0^{\timedomainmax} \| \divergence((\ptsmap_2(\damageprocess +\perturbation) - \ptsmap_2(\damageprocess))\elasticitytensor\strains(\displacements_w))\|_{\integrablefunctions^2(\spatialdomain)^{\dimension}}\|\ptsmap_1(\damageprocess+\perturbation)-\ptsmap_1(\damageprocess))\|_{\integrablefunctions^2(\spatialdomain)^{\dimension}}\,d\timevariable \qquad\\
    \le C \|\ptsmap_2(\damageprocess + \perturbation)-\ptsmap_2(\damageprocess)\|_{\integrablefunctions^\infty(\timedomain;\sobolevfunctions^{1,\infty}(\spatialdomain))} \| w \|_{\integrablefunctions^2(\timedomain\times\spatialdomain)^{\dimension}} \|\forwardoperator(\damageprocess + \perturbation) - \forwardoperator(\damageprocess)\|_{\integrablefunctions^2(\timedomain\times\spatialdomain)^{\dimension}}.
  \end{multline}
  Note that we used $\|\displacements_w\|_{\qisobolevfunctions^2(\spatialdomain)^{\dimension}}\le\constant(1-\damageconstant_0)^{-1}\|w\|_{\integrablefunctions^2(\spatialdomain)^{\dimension}}$ where $c$,$\damageconstant_0$ are indepentent of $\damageprocess$ and $\perturbation$.
  Remember that $\damageconstant_0$ denotes the constant from \eqref{eq:damage_functions}.
  Using results on higher regularity and
  \begin{equation}
    \|\forwardoperator(\damageprocess + \perturbation) - \forwardoperator(\damageprocess) - \partial\forwardoperator(\damageprocess)\perturbation\|_{\integrablefunctions^2(\timedomain\times\spatialdomain)^{\dimension}}
    = \sup_{\scriptscriptstyle{\substack{\|w\|=1, \\ \int_{\spatialdomain}w\,d\spatialvariables=0}}} \langle \forwardoperator(\damageprocess + \perturbation) - \forwardoperator(\damageprocess) - \partial\forwardoperator(\damageprocess)\perturbation, w \rangle_{\integrablefunctions^2(\timedomain\times\spatialdomain)^{\dimension}}
  \end{equation}
  reveals
  \begin{equation}\label{eq:full_tangential_cone_condition}
    \|\forwardoperator(\damageprocess + \perturbation) - \forwardoperator(\damageprocess) - \partial\forwardoperator(\damageprocess)\perturbation\|_{\integrablefunctions^2(\timedomain\times\spatialdomain)^{\dimension}}\le \constant \|h\|_{\damageprocesses_s} \|\forwardoperator(\damageprocess + \perturbation) - \forwardoperator(\damageprocess)\|_{\integrablefunctions^2(\timedomain\times\spatialdomain)^{\dimension}}
  \end{equation}
  and completes the proof.
\end{proof}
\begin{Remark}
  Even if the constants dependet on $\damageprocess$ the result would be strong enough to ensure convergence of the Landweber iteration under the additional assumption that the initial guess $\damageprocess_0$ is chosen close enough to $\damageprocess^{\dagger}$.
  The proof can be found in \cite{HaNeuSche95,KaNeSche08}.\\[2ex]
\end{Remark}
\begin{Corollary}
  The operator $\forwardoperator$ introduced in \Cref{def:forward_operator} satisifes the \emph{tangential cone condition} from \eqref{eq:tangential_cone_condition}.
\end{Corollary}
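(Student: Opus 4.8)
The plan is to obtain the tangential cone condition \eqref{eq:tangential_cone_condition} as a direct localization of the estimate already proven in Theorem \ref{theo:pre_tangential_cone_condition}. First I would take two arbitrary admissible processes $\damageprocess,\damageprocess'\in\damageprocesses_s=\domain(\forwardoperator)$ and set $\perturbation:=\damageprocess'-\damageprocess$. Then $\perturbation$ lies in the ambient Hilbert space $\qisobolevfunctions^s(\timedomain\times\spatialdomain\times Y)$ and both endpoints $\damageprocess$ and $\damageprocess+\perturbation=\damageprocess'$ are admissible, so the hypotheses of Theorem \ref{theo:pre_tangential_cone_condition} are met. Substituting into \eqref{eq:pre_tangential_cone_condition} immediately yields
\begin{equation*}
  \|\forwardoperator(\damageprocess')-\forwardoperator(\damageprocess)-\partial\forwardoperator(\damageprocess)(\damageprocess'-\damageprocess)\|_{\integrablefunctions^2(\timedomain\times\spatialdomain)^{\dimension}}
  \le c\,\|\damageprocess'-\damageprocess\|_{\damageprocesses}\,\|\forwardoperator(\damageprocess')-\forwardoperator(\damageprocess)\|_{\integrablefunctions^2(\timedomain\times\spatialdomain)^{\dimension}},
\end{equation*}
which already has the structural form of the target estimate; the only thing left is to force the prefactor $c\,\|\damageprocess'-\damageprocess\|_{\damageprocesses}$ below the threshold $\tfrac12$.

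The second step is to control this prefactor uniformly on a small ball. Since $\domain(\forwardoperator)=\damageprocesses_s=\damageprocesses\cap\qisobolevfunctions^s(\timedomain\times\spatialdomain\times Y)$ and the embedding $\damageprocesses_s\hookrightarrow\damageprocesses$ is continuous (indeed compact, as exploited in Section \ref{sec:Ill_Posedness_of_Nonlinear_Inverse_Problem}), there is a constant $c_e>0$ with $\|\cdot\|_{\damageprocesses}\le c_e\|\cdot\|_{\damageprocesses_s}$. Fixing any center $\damageprocess_0\in\damageprocesses_s$ and restricting to $\damageprocess,\damageprocess'\in\ball(\damageprocess_0;\radius)\subset\damageprocesses_s$, the triangle inequality gives $\|\damageprocess'-\damageprocess\|_{\damageprocesses_s}\le 2\radius$, whence $c\,\|\damageprocess'-\damageprocess\|_{\damageprocesses}\le 2\,c\,c_e\,\radius$. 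Choosing the radius small enough, namely $\radius<(4\,c\,c_e)^{-1}$, and setting $\eta:=2\,c\,c_e\,\radius<\tfrac12$, the previous display becomes
\begin{equation*}
  \|\forwardoperator(\damageprocess')-\forwardoperator(\damageprocess)-\partial\forwardoperator(\damageprocess)(\damageprocess'-\damageprocess)\|_{\integrablefunctions^2(\timedomain\times\spatialdomain)^{\dimension}}
  \le \eta\,\|\forwardoperator(\damageprocess')-\forwardoperator(\damageprocess)\|_{\integrablefunctions^2(\timedomain\times\spatialdomain)^{\dimension}}
\end{equation*}
for all $\damageprocess,\damageprocess'\in\ball(\damageprocess_0;\radius)$, which is precisely \eqref{eq:tangential_cone_condition}.

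There is essentially no analytical obstacle left here, since the substantive work is carried entirely by Theorem \ref{theo:pre_tangential_cone_condition}; the corollary is a purely quantitative localization. The only two points demanding a touch of care are to verify that the constant $c$ from that theorem is genuinely independent of $\damageprocess$ and $\perturbation$ (which its statement asserts) and to confirm that $\ball(\damageprocess_0;\radius)$ can be chosen inside $\domain(\forwardoperator)$ so that every term is well defined. I would also remark, following the \Cref{rem:mollified_gradient}-type observation recorded after Theorem \ref{theo:pre_tangential_cone_condition}, that even a $\damageprocess$-dependent constant $c$ would suffice for convergence of the nonlinear Landweber iteration provided the initial guess is taken close enough to $\damageprocess^{\dagger}$; the uniform constant merely yields the clean global form \eqref{eq:tangential_cone_condition}.
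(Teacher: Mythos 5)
Your proposal is correct and follows essentially the same route as the paper: both deduce the corollary by localizing Theorem \ref{theo:pre_tangential_cone_condition} to a ball $\ball(\damageprocess_0;\radius)$ whose radius is chosen small enough that the prefactor $c\,\|\damageprocess'-\damageprocess\|$ drops below $\tfrac12$. If anything, your version is the more careful one --- you make the norm conversion via the embedding constant explicit and pick $\radius<(4\,c\,c_e)^{-1}$, whereas the paper's one-line proof sets $r:=c/4$, which only works for $c<1$ and is presumably a typo for $1/(4c)$.
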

\begin{proof}
  Taking the constant $c$ from Theorem \ref{theo:pre_tangential_cone_condition} and any $\damageprocess_0\in\damageprocesses_s$ we choose $r:=c/4$, $\damageprocess,\damageprocess'\in\ball(\damageprocess_0;r)$ and $h:=\damageprocess-\damageprocess'$.
\end{proof}
\section{Outlook}\label{sec:Outlook}
After having motivated the forward problem in the shape of a fully-coupled, strongly nonlinear system of differential equations describing damage evolution in a quasi-static elastic setting, we thoroughly investigated the forward operator. 
We showed well-posedness, regularity, differentiability, and, in addition, characterized the Hilbert adjoint of the linearized problem.
We provided all the tools to successfully present the inverse problem and to analyze it in depth. 
We were able to show that it is ill-posed and by verifying the hard to come by tangential cone condition, we proved a sufficient condition for the nonlinear Landweber method and a necessary condition for many iterative Hilbert space methods.\\[2ex]
While we will investigate the numerical implementation in the second part, we want to give some ideas for possible future research.
The right-hand side of the damage evolution equation (see. \eqref{eq:DaEvEq}) still imposes some structure on the damage processes $\damageprocess\in\damageprocesses$.
In a next step it would be rather straight forward to incorporate the damage variable into the process, i.e., 
\begin{equation}
    \damage' = \nemytskiioperator(\damage,\mollifiedgradient\displacements)\text{ in }\integrablefunctions^{\infty}(\timedomain;\integrablefunctions^{\infty}(\spatialdomain)).
\end{equation}
As mentioned before, it might be beneficial to change the damage evolution equation setting into an $\integrablefunctions^p$-setting.
This would help getting rid of the need to regularize the displacements' gradient.
In regard of the inverse problem, we only focused on the solution operator of the forward problem. 
In many applications in elasticity, measurements are taken on part of the boundary of the domain. 
But getting comparable results with measurements on part of the boundary is not easy to come by.

\section*{Acknowledgements}
The authors are indebted to Michael B\"ohm (Bremen, Germany) for initiating and supporting this research.
\addcontentsline{toc}{section}{References}
\printbibliography
\end{document}